   \newtheorem{thm}{Theorem}[subsection]
      \newtheorem*{thm*}{Theorem}
    \newtheorem{fact}[thm]{Fact}
   \newtheorem{prop}[thm] {Proposition}     
   \newtheorem{lemma} [thm]{Lemma}
   \newtheorem{cor} [thm]{Corollary}
   \newtheorem*{conjecture*}{Conjecture}
\theoremstyle{definition}
          \newtheorem*{exercise*}{Exercise}
     \newtheorem{example}[thm]{Example}
 \newtheorem{definition}[thm]{Definition} 
  \newtheorem{defi}[thm] {Definition}
  \newtheorem{conv} [thm]{Convention}
  \newtheorem{remark} [thm]{Remark}
\newcommand{\N}{{\mathbb{N}}}
\newcommand{\PP}{{\mathbb{P}}}
\newcommand{\Z}{{\mathbb{Z}}}
 \newcommand{\cA}{{\mathcal A}}
\newcommand{\cC}{{\mathcal C}}
\renewcommand{\cL}{{\mathcal L}}
\newcommand{\cO}{{\mathcal O}}
\newcommand{\cP}{{\mathcal P}}
\newcommand{\cQ}{{\mathcal Q}}
\newcommand{\cX}{{\mathcal X}}
 \newcommand{\md}{{\underline{d}}}
  \newcommand{\mc}{{\underline{c}}}
  \newcommand{\mt}{{\underline{t}}}
 \newcommand{\mdeg}{{\underline{\deg}}}
\def\<{\langle}
\def\>{\rangle}
\newcommand{\ocO}{\overline{{\mathcal O}}}
\newcommand{\oO}{{\overline{O}}}
\newcommand{\Pic}{{\operatorname{{Pic}}}}
\newcommand{\Div}{{\operatorname{Div}}}
\newcommand{\Aut}{{\operatorname{Aut}}}
\newcommand{\double}{\genfrac..{0pt}1
{\raise -2pt\hbox{$\scriptstyle\longrightarrow$}}{\raise 4pt\hbox
{$\scriptstyle\longrightarrow$}}} 
\newcommand{\ssm}{\smallsetminus}
 \newcommand{\la}{\longrightarrow}
\newcommand{\ha}{\hookrightarrow}
\newcommand{\ov}{\overline}
\newcommand{\un}{\underline}
\def\Mgb{\overline{M}_g}
 \newcommand{\Mgnb}{\ov{M}_{g,n}}
\newcommand{\PYb}{\overline{P}^{g-1+b}_Y}
\newcommand{\PX}{\overline{P}^{g-1}_X}
\newcommand{\PXd}{\overline{P}^{d}_X}
\newcommand{\Pdgb}{\overline{P}^{d}_g}
\newcommand{\PXb}{\overline{P}^{g-1+b}_X}
\newcommand{\Pgb}{\overline{P}^{g-1}_g}
\newcommand{\Pgg}{\overline{P}^{g}_g}
\newcommand{\PXg}{\overline{P}^{g}_X}
\newcommand{\Pbb}{\overline{P}^{g-1+b}_g}
\newcommand{\Mgtb}{{\ov{M_{g}}^{\rm trop}}}
 \newcommand{\hX}{\hat{X}}
\newcommand{\hY}{\hat{Y}}
\newcommand{\hL}{\hat{L}}
\newcommand{\hM}{\hat{M}}
\newcommand{\hG}{\hat{G}}
\newcommand{\hH}{\hat{H}}
\newcommand{\hgamma}{\hat{\gamma}}
 \newcommand{\qOg}{[\Og]}  
 \newcommand{\Og}{\overline{\mathcal{OP}}_{g}^b}
   \newcommand{\oOG} {\overline{\mathcal{OP}}}  
      \newcommand{\Ag}{{\mathcal{A}}_{g}^b}      
 \newcommand{\OG}{\overline{\mathcal{OP}}^b_G}    
  \newcommand{\OGt}{\overline{\mathcal{OP}}^0_G} 
  \newcommand{\OHt}{\overline{\mathcal{OP}}^0_H} 
    \newcommand{\OGb}{\overline{\mathcal{OP}}^1_G} 
        \newcommand{\OHb}{\overline{\mathcal{OP}}^1_H} 
  \newcommand{\oOH}{\overline{\mathcal{OP}}^b_H}
\newcommand{\OP}{{\mathcal{OP}}}
\newcommand{\oOP}{{\overline{\mathcal{OP}}}}
\newcommand{\OPt}{{\mathcal{OP}}^0} 
 \newcommand{\OPb}{{\mathcal{OP}}^1}
    \newcommand{\OPGb}{{\mathcal{OP}}^b_G} 
    \newcommand{\oOPGb}{{\ov{\mathcal{OP}}}^b_G} 
    \newcommand{\oOPHb}{{\ov{\mathcal{OP}}}^b_H}
    \newcommand{\qOPGb}{[{\ov{{\mathcal{OP}}}}^b_G]}
       \newcommand{\qOgb}{[{\ov{{\mathcal{OP}}}}^b_g]} 
   \newcommand{\XN}{X^{\nu}}    
 \newcommand{\Sg}{{\mathcal {SG}}_g}
     \newcommand{\AGr}{\cA^1_G} 
   \newcommand{\AGb}{\cA^b_G} 
      \newcommand{\Ab}{\cA^b} 
   \newcommand{\AGt}{\cA^0_G} 
        \newcommand{\At}{\cA^0}
\begin{document}

\bibliographystyle{plain}

\title{Combinatorics of compactified universal Jacobians}

\author[]{Lucia Caporaso and Karl Christ}

 \address[Caporaso]{Dipartimento di Matematica e Fisica\\
 Universit\`{a} Roma Tre\\Largo San Leonardo Murialdo \\I-00146 Roma\\  Italy }\email{caporaso@mat.uniroma3.it}
 \address[Christ]{Dipartimento di Matematica e Fisica\\
 Universit\`{a} Roma Tre\\Largo San Leonardo Murialdo  I-00146 Roma Italy \\ and
Department of Mathematics\\
 Ben-Gurion University of the Negev\\
  P.O.Box 653, Be'er Sheva, 84105, Israel  }\email{kchrist@mat.uniroma3.it, christk@post.bgu.ac.il}

 \begin{abstract} 
We show that   the  combinatorial structure of the compactified universal  Jacobians over $\Mgb$  in degrees $g-1$ and $g$    is  governed by 
 orientations on stable graphs.
In particular, for a  stable curve   we exhibit   graded stratifications of  the compactified Jacobians 
in terms  of totally cyclic, respectively rooted, orientations on  its   dual graph.
We prove functoriality under edge-contraction of the posets of totally cyclic and rooted orientations on stable graphs.
   \end{abstract}

\maketitle


 \section{Introduction and Preliminaries}
  \subsection{Introduction}
  \label{intro}

The boundary of  the compactification of various moduli spaces   exhibits a  stratification in terms of increasingly degenerate objects. A basic example of this  phenomenon
is $\Mgb$, the compactification of the moduli space of smooth curves of genus $g\geq 2$ by stable curves, 
where the boundary strata parametrize curves with an increasing  number of nodes.

This widespread behaviour has received new attention lately thanks to recent progress in tropical   and non-Archimedean geometry. In fact, a thorough study of the boundary of $\Mgb$ and of its combinatorial incarnation
has led to a remarkable discovery: in loose words,    the Berkovich skeleton of $\Mgb$ 
(the tropicalization of $\Mgb$) is the moduli space  for the skeleta of stable curves over complete valued fields
(the moduli space of tropical curves, $\Mgtb$). 
An analogous  result   holds for other moduli spaces,
like $\Mgnb$  or the space of admissible covers. 
These facts are proved,  building upon results of 
 \cite{berkovich},   \cite{BMV} and \cite{thuillier},  in  \cite{ACP} for $\Mgnb$  and in \cite{CMR} for admissible covers; see also \cite{BPR1}, \cite{tyomkin}, \cite{viviani},
\cite{BR15} for related progress. We here investigate the compactification of the universal Jacobian.
 
As we said, the starting point has been the study of the boundary from the combinatorial point of view.
First, one shows it admits a so-called {\it graded stratification}  by a poset $\cP$, then  
 one identifies  $\cP$  with a combinatorial object interesting on its own.
For example, for $\Mgb$ the stratifying poset  is  $\Sg$, the set of all stable graphs of genus $g$ 
partially  ordered with respect to edge-contraction.  We have a     ``stratification" map, 
$\Mgb\to \Sg$,  mapping a curve   to its dual graph.  This stratification implies, roughly speaking,  
that to a degeneration of curves  there corresponds a ``dual" edge-contraction of dual graphs,  and 
to    edge-contractions there correspond degenerations of curves.

In this paper we shall extend this correspondence to degenerations of curves together with a line bundle
by suitably enriching the combinatorial counterpart. 
Moreover, we   shall prove this holds on the compactification of the universal  degree-$d$ Jacobian 
(or  degree-$d$ Picard variety)  over $\Mgb$, for $d=g-1,g$. Let us   be more precise.

Recall that for any $d\in \Z$ the compactification of the  universal degree-$d$
Jacobian is a projective morphism, 
$ 
\psi:\Pdgb\to \Mgb
$, 
whose fiber over an automorphism-free  curve,  $X$, is $\Pic^d(X)$ if $X$ is smooth,
and a compactified degree-$d$ Jacobian, $\PXd$, if $X$ is singular; we use the notation and moduli description  of \cite{Cthesis}.
   As $d$ varies  so does  $\Pdgb$,  but it is well known that there are only finitely many non-isomorphic types,
 each of which can be realized by a value of $d$ such that
$0\leq d\leq g$.  

We concentrate on the cases $d=g-1$ and $d= g$,
which are of special interest. The case $d=g-1$ has been studied extensively    because of its   connection with Prym varieties,  the Theta divisor and the Torelli problem; see \cite{be77}, \cite{Al04},  \cite{CV2}. The case $d=g$   is  notable because
  $\Pgg$ is the coarse moduli scheme of a Deligne-Mumford stack,
and its fiber over the curve $X$ is a compactified Jacobian  of N\'eron type, i.e. it compactifies the N\'eron model of the Jacobian of a regular one-parameter smoothing of $X$. 

Before studying the full space $\Pdgb$ we    study its fiber, $\PXd$, over the curve $X$.
The space $\PXd$ parametrizes
  line bundles on    partial normalizations of $X$ 
having a special multidegree; as multidegrees on $X$ coincide with divisors on the dual graph, $G$, of $X$, we call
such special multidegrees  {\it stable  divisors}.
This leads to a stratification of $\PXd$  given by the sets of nodes that are normalized, and by the sets of
stable divisors on the partial normalization. 
For a fixed curve $X$ the existence of  such a stratification was essentially   known, but 
a  combinatorially interesting incarnation for it was not, with the exception of the case $d=g-1$.
Indeed,  it was known that a divisor of degree $g-1$  is stable   if and only if it is the divisor  associated to a  {\it totally cyclic} orientation  on $G$. Preceeding the notion of stable divisor, this observation was made in \cite[Lemma 2.1]{be77}
while studying Prym varieties. 
 Independently, using the basic inequality of \cite{Cthesis}, this is a consequence of a theorem in graph theory  known as Hakimi's Theorem (originally in \cite{Ha65},   see also \cite[Theorem 4.8]{ABKS}). The graded stratification of $\PX$ by totally cyclic orientations was established   in \cite{CV2} to study the Torelli map of stable curves.  

We will prove    results of a similar type in   case $d=g$, and show  that
 $\PXg$ has a graded stratification by the poset of {\it rooted} (generalized) orientations 
on $G$; see Definition~\ref{1ordef}.  In particular,  we   show that a divisor   is stable if and only if it is the divisor associated to a rooted  orientation. We note that from this and \cite[Lemma 3.3]{ABKS} it easily follows that the notions of break divisor, as introduced in \cite{MZ08}, and of stable divisor   coincide.

We  will introduce for a stable graph $G$ two graded posets: the poset $\OGt$ of totally cyclic  orientation  classes on  spanning   subgraphs of $G$,  and the poset $\OGb$ of rooted orientation classes on  spanning  subgraphs of $G$. 
  We treat the cases $d=g-1$ and $d=g$ simultaneously, so we write $b=0,1$ and $d=g-1+b$.
 By mapping a point to its stratum  we   get a graded stratification map  $\PXb \rightarrow  \OG$; see Theorem~\ref{PX}.

Then we extend our analysis over $\Mgb$ which, as we said,     is stratified by $\Sg$
  ordered by edge-contraction.
The   goal is to endow   $\Pgb$ and $\Pgg$  with a graded stratification   compatible with the one of $\Mgb$.
 In order to do that 
 we need to study the     behaviour of the posets $\OG$ under edge-contractions. 
 This is a   combinatorial problem which, to our knowledge, has never been studied.
 Our   main result    here is  Theorem~\ref{fthm},    stated informally   as follows.
\begin{thm}
\label{fthmintro}
Let $G$ and $G'$ be stable graphs and 
let $\gamma:G\to G'$ be a non-trivial edge-contraction. Then, for  $b=0,1$, 
we have a natural   quotient  of posets $  
{\overline {\gamma}}_*:\oOPGb\to \oOP^b_{G'} . 
$ 
   \end{thm}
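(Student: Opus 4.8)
The plan is to construct $\overline{\gamma}_*$ by \emph{contracting orientations} along the edge killed by $\gamma$, and then to verify the three properties that make it a quotient of posets: order-preservation, surjectivity, and the coincidence of the target order with the image order. First I would reduce to a single edge. Any non-trivial edge-contraction factors as a composition of contractions of single edges, and a composition of poset quotients is again a poset quotient (if $P\to Q$ and $Q\to R$ are quotients, a comparability in $R$ lifts to $Q$ and then to $P$); the functoriality asserted by the word ``natural'' reduces to the associativity of the construction. So assume $\gamma$ contracts one edge $e$ with endpoints $u,v$ (possibly $u=v$ if $e$ is a loop), so that $E(G')=E(G)\smallsetminus\{e\}$ and $G'=G/e$.

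Recall from Theorem~\ref{PX} and Definition~\ref{1ordef} that a point of $\oOPGb$ is a generalized orientation class on $G$, i.e. a pair $(S,\mathcal O)$ with $S\subseteq E(G)$ the set of deleted edges and $\mathcal O$ a totally cyclic (for $b=0$) resp. rooted (for $b=1$) orientation on the spanning subgraph $G-S$, taken up to the class equivalence, and that the order records inclusion of the deleted sets together with the induced degeneration of orientations. I would set $\overline\gamma_*(S,\mathcal O)=(S\smallsetminus\{e\},\mathcal O')$, where $\mathcal O'$ is obtained by \emph{identifying} $u$ and $v$ into a single vertex $w$ of $G'$ and keeping the orientation of every remaining edge; when $e\notin S$ the edge $e$ becomes a loop at $w$ and is discarded. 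In divisorial terms this is precisely the edge-contraction of the associated stable divisor, whose value at $w$ is the sum of the values at $u$ and $v$, so the construction is forced by the description of strata via stable divisors in Theorem~\ref{PX}. The first thing to check is that $\mathcal O'$ is again totally cyclic, resp. rooted. For $b=0$ this is immediate, since identifying two vertices never destroys a directed closed walk: every surviving edge lying on a directed cycle of $\mathcal O$ still lies on one, and only the discarded loop $e$ is lost. For $b=1$ I would track the root through the identification — it becomes $w$ if it was $u$ or $v$, and is unchanged otherwise — and verify the rooted condition against Definition~\ref{1ordef}; this is the one check that genuinely uses the generalized-orientation bookkeeping. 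That $\overline\gamma_*$ descends to classes follows because the class equivalence is generated by reversals of directed cycles (which preserve all indegrees), and the identification carries such reversals to reversals of directed cycles of $\mathcal O'$. Order-preservation is then routine: each covering relation in $\oOPGb$ either enlarges the deleted set by one edge or degenerates the orientation within a fixed spanning subgraph, and $S\mapsto S\smallsetminus\{e\}$ together with the identification commutes with both operations.

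It remains to prove that $\overline\gamma_*$ is a \emph{quotient}: it is surjective and $\xi'_1\le\xi'_2$ holds in $\oOP^b_{G'}$ if and only if there are preimages $\xi_1\le\xi_2$ in $\oOPGb$. Surjectivity requires lifting a class $(S',\mathcal O')$ on $G'$ by \emph{splitting} $w$ back into $u,v$ — the distribution of the half-edges at $w$ is dictated by $G$ — and then deciding the status of $e$: either delete it (put $e\in S$) or orient it from $u$ to $v$ or from $v$ to $u$. Here lies the main obstacle. Splitting $w$ can break the directed cycles (resp. the rooted paths) that passed through it, so in general one cannot simply delete $e$; instead one must orient $e$ so as to restore the relevant directed cycles (resp. to preserve the rooted condition). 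The heart of the argument is to show that at least one such choice yields a valid totally cyclic, resp. rooted, orientation on $G$ — this is where Hakimi's theorem is used for $b=0$, and the characterization of rooted orientations from Definition~\ref{1ordef} for $b=1$. For the order-reflection statement I would take a comparability $\xi'_1\le\xi'_2$ downstairs and lift it \emph{coherently}: fix the lift of the larger element first, then propagate the choice of $e$ (and of the split orientation) to the smaller one so that the two lifts remain comparable. Establishing that such a monotone choice of lifts always exists, simultaneously respecting the totally cyclic/rooted constraint and the poset order, is the crux of the proof; the remaining steps are bookkeeping on spanning subgraphs and their orientations.
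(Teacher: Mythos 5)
Your overall architecture --- reduce to a single contracted edge, push an orientation forward by keeping it on the surviving edges, then prove surjectivity and order-reflection by lifting --- is the paper's architecture (Proposition~\ref{fbar}, Proposition~\ref{fsu}, Theorem~\ref{fthm}). But the step you yourself call ``the crux'' is exactly the one you do not carry out. You say one must orient $e$ ``so as to restore the relevant directed cycles'' and that ``the heart of the argument is to show that at least one such choice yields a valid'' orientation, invoking Hakimi's theorem for $b=0$; you never exhibit the choice. The paper's proof is an explicit construction: for $b=0$ one picks a cyclically oriented cycle $C\subset H$ through $v_e=\gamma(e)$, whose edges span in $G$ a directed path $P$ joining the two ends of $e$; orienting $e$ so that $P+e$ becomes a cyclically oriented cycle gives a totally cyclic lift (any directed cut would have to contain $e$ and hence would cut the directed cycle $P+e$). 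For $b=1$ one lifts a directed path from the bioriented edge to $v_e$ and orients $e$ to prolong it, verifying rootedness via Lemma~\ref{lmfree}. Hakimi's theorem is not used here and would not obviously suffice: it produces \emph{some} orientation with a prescribed divisor, not one whose restriction to $G-e$ is the given induced orientation, and one would still need total cyclicity and the coherent choice of lifts along a chain (the paper's Step~2). So the central claim is asserted, not proved.

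There is also a genuine error in your $b=1$ setup. In this paper a rooted $1$-orientation is rooted at a \emph{bioriented edge}, not at a vertex, so ``tracking the root through the identification --- it becomes $w$ if it was $u$ or $v$'' does not apply: if the bioriented edge is the contracted edge $e$ itself, your recipe (keep every remaining orientation, discard $e$) outputs a $0$-orientation of $G'$, not an element of $\oOP^1_{G'}$. The paper resolves this by first replacing $O_S$ with an equivalent orientation whose bioriented edge avoids the contracted set (Lemma~\ref{lmfree}, using $S_0\neq E(G)$); this is precisely why $\overline{\gamma}_*$ is defined only on equivalence classes when $b=1$ (Proposition~\ref{fbar}). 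Two smaller inaccuracies: the pushforward is \emph{not} always ``the edge-contraction of the associated stable divisor'' --- when $e\in S$ one has $\md^{\gamma_*O_S}=\gamma_*\md^{O_S}+\mc^{\gamma,S}$ with $\mc^{\gamma,S}\neq 0$ (Proposition~\ref{fprop} and the example following it), so descent to classes should be checked via this formula rather than by appeal to cycle reversals; and for $b=0$ the fiber over a stratum indexed by $T\subset E(G')$ is carried by $G-\gamma^*T$ with $\gamma^*T=T\cup(G-T)_{br}$, i.e.\ one may be \emph{forced} to place $e$ in $S$ because it has become a bridge, a case your ``delete it or orient it'' alternative should isolate explicitly.
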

 
Taking the   action
  of $\Aut(G)$ into account we
  have 
  a quotient of posets,  $\qOg \to \Sg$, whose fiber over $G$ is $\OG/\Aut(G)$.
The theorem below, describing our    compactified Jacobians  in terms 
of orientations on graphs,
  summarizes the algebro-geometric results.

\begin{thm}
\label{mainintro}
Let $b=0,1$.
 The following diagram   is commutative. The four horizontal maps, denoted by $\sigma$, are  graded stratification maps,
 and the vertical map $\mu$ is a quotient of posets.
  $$\xymatrix@=.5pc{
\Pbb  \ar@{->>} [rrrrrrrrr]^{\sigma}    \ar[ddddd]_{\psi}&&&&&&&&&\qOg\ar [ddddd]^{\mu}&&\\
  &&&&&&&&&   && \\
  &\PXb/\Aut(X) \ar[dd] \ar@{_{(}->}[luu]    \ar@{->>}[rrrrrr]^{\sigma}  &&&&&&\qOPGb \ar [dd]\ar@{^{(}->}[rruu]  &&&   &&&&  \\
  &&\PXb\ar[lu]\ar[ld]  \ar@{->>}[rrr]^{\sigma} &&&\OG\ar[rru]\ar[rrd] &&&& &&\\
 &[X]   \ar@{|->}[rrrrrr]  \ar@{|->}[ld] &&&&&&[G]  \ar@{|->}[rrd] &&& &&\\  
 \Mgb \ar@{->>}  [rrrrrrrrr]^{\sigma} &&&&&&&&&\Sg&&\\
}$$
\end{thm}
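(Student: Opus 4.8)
The plan is to read the diagram as two nested stratification statements glued along the projection $\psi$: an \emph{inner square}, living in a single fiber of $\psi$ and furnished essentially by Theorem~\ref{PX} together with its passage to automorphism quotients, and an \emph{outer square}, which assembles the fiberwise pictures over the known base stratification $\Mgb\to\Sg$. By the moduli description of \cite{Cthesis} the fiber of $\psi$ over a point $[X]$ is $\PXb/\Aut(X)$, while the fiber of $\mu$ over $[G]$ will be $\qOPGb=\OG/\Aut(G)$; thus the inner square is exactly the restriction of the outer square to a single fiber. First I would fix $X$ with dual graph $G$ and invoke Theorem~\ref{PX}, which provides the innermost horizontal map, the graded stratification $\sigma\colon\PXb\to\OG$.

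Second, to obtain the quotient level $\PXb/\Aut(X)\to\qOPGb$, I would check that $\sigma$ is equivariant for the actions of $\Aut(X)$ on $\PXb$ and of $\Aut(G)$ on $\OG$, compatibly with the natural homomorphism $\Aut(X)\to\Aut(G)$: automorphisms permute the normalized node-sets and the stable multidegrees, hence the associated orientation classes, compatibly. A graded stratification descends through a quotient by a finite group as soon as the stratification map is equivariant and the grading is invariant, so $\sigma$ induces $\PXb/\Aut(X)\to\OG/\Aut(G)=\qOPGb$. Commutativity of the resulting inner square with the vertical quotient maps and with the structure maps to the points $[X]\mapsto[G]$ holds by construction.

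Third, and this is the heart of the matter, I would glue the fiberwise stratifications over $\Mgb$. The base map $\sigma\colon\Mgb\to\Sg$, sending a curve to its dual graph, is the classical graded stratification by edge-contraction. I would then define $\qOg$ as a set to be $\bigsqcup_{[G]\in\Sg}\OG/\Aut(G)$, with $\mu$ the evident projection to $\Sg$: within each fiber the order is the one from the quotient level, and the order relations between distinct fibers are dictated by Theorem~\ref{fthm}, which for every non-trivial edge-contraction $\gamma\colon G\to G'$ supplies the quotient of posets $\overline{\gamma}_*\colon\oOPGb\to\oOP^b_{G'}$. That $\mu$ is itself a quotient of posets then follows from the fact that these $\overline{\gamma}_*$ are mutually compatible quotients, which is the content of Theorem~\ref{fthm}. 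Finally $\sigma\colon\Pbb\to\qOg$ is defined by sending a point over $[X]$ to the class of its fiberwise stratum.

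The main obstacle is to verify that this combinatorially defined order on $\qOg$ genuinely computes the closures of the strata of $\Pbb$, so that $\sigma$ is a graded stratification globally and not merely fiberwise. The decomposition into locally closed strata and the grading follow by combining the fiberwise strata with the base stratification, but the closure relation across fibers is genuinely geometric. Choosing a one-parameter smoothing with special fiber of dual graph $G$ and general fiber of dual graph $G'$, where $\gamma\colon G\to G'$ is the edge-contraction smoothing the relevant nodes, one must show that the orientation class of a limit in the special compactified Jacobian is carried by $\overline{\gamma}_*$ to the orientation class of the general member, and that every such limit occurs. This is precisely the compatibility between specialization of stable multidegrees and edge-contraction of orientations, and it is where Theorem~\ref{fthm} does the essential work; the finite quotients by $\Aut$ add no new difficulty, since every map in sight is equivariant. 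Commutativity of the full diagram then follows, because each horizontal $\sigma$ assigns to a stratum its orientation class while each vertical map forgets the line bundle, respectively the orientation, data.
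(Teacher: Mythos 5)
Your architecture is the paper's: the top row is exactly Theorem~\ref{thmg-1}, obtained by assembling the fiberwise stratification of Theorem~\ref{PX} through the $\Aut(X)\to\Aut(G)$ quotients and over the base stratification $\Mgb\to\Sg$, with the poset/rank structure on $\qOg$ supplied by Proposition~\ref{cOP}. You also correctly isolate the one step that is genuinely geometric, namely the closure relation between strata lying over different points of $\Sg$.

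The gap is that you name this step but do not carry it out, and the tool you point to cannot do it. Theorem~\ref{fthm} is purely combinatorial: it says $\overline{\gamma}_*\colon\oOPGb\to\oOP^b_{G'}$ is a quotient of posets, which is what makes $\qOg$ a poset and $\mu$ a quotient, but it says nothing about limits of line bundles. The paper's proof of the two closure implications (Propositions~\ref{prop1} and \ref{prop3}) rests instead on Propositions~\ref{hbk1} and \ref{hbk2}, which translate between specializations of \emph{polarized} curves and contractions via $\gamma_*\mdeg(L)=\mdeg(M)$, applied not to $X$ and $Y$ themselves but to the quasistable models $\hX_S$, $\hY_T$ and their dual graphs $\hG_S$, $\hH_T$. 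This forces the bookkeeping you omit entirely: the identity $\widehat{\md^{\gamma_*O_S}}=\hgamma_*\widehat{\md^{O_S}}$ (using the correction divisor $\mc^{\gamma,S}$ from Proposition~\ref{fprop}\eqref{dlm}) for the containment direction, and, for the harder converse $P_G^{[O_S]}\cap\ov{P_H^{[O_T]}}\neq\emptyset\Rightarrow(G,[O_S])\leq(H,[O_T])$, the auxiliary orientation $\hat{O}$ on $\hG_S$ pointing into the exceptional vertices together with a choice of contraction $\delta\colon\hG_S\to G$ making the relevant triangle commute. Separately, your claim that the locally closed decomposition and grading "follow by combining the fiberwise strata with the base stratification" glosses over the fact that irreducibility and quasiprojectivity of the global strata $P_G^{[O_S]}$ are not fiberwise statements: the paper must realize each stratum as the image of $\Pic^{\md^{O_S}}_{f_S}$ for a family over a finite cover $\widetilde{M}'_G$ chosen so that universal curves exist (Lemma~\ref{PGqp}), and must also verify that distinct conjugacy classes give disjoint strata, which uses that an automorphism of $X$ identifying two fiberwise strata induces an automorphism of $G$ identifying the orientation classes.
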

 
 This   holds in degrees $g-1$ and $g$ and can be easily extended to degree $g - 2$  (by taking the residual of the degree $g$ case).

The theorem gives the sought-for combinatorial presentation of the compactified Jacobian of a curve, and of the compactified universal Jacobian over $\Mgb$. The next question now is  to provide the  tropical/non-Archimedian version of the theorem, 
starting from the fact that   the left-bottom corner of the diagram should be occupied by the moduli space of tropical curves,
$\Mgtb$, while the right side should be the same, up to isomorphism.
This will involve  constructing   skeleta of $\PXd$ and $\Pdgb$ as moduli spaces of suitable polyhedral objects. This   research direction relates to results of
 \cite{BR15}, where  the skeleton of the Jacobian of a curve over a valuation ring is shown to be the Jacobian of the skeleton of the curve. Results of \cite{Sthesis} suggest that similar methods can be used to treat the case of compactified Jacobians. Results of \cite{Kthesis} indicate that one can extend this description  to the universal setting on the combinatorial side. 
 We refer to  \cite{AP18} for an approach to this program using
  the compactification constructed in \cite{Esteves}, which   coincides with the one we use here for degree $g$, but not for degree $g - 1$.

The paper begins with some elementary combinatorial preliminaries. Then
Section~\ref{secfun}     establishes  the  main results for    orientations   and  their behavior  under edge-contractions,
proving Theorem~\ref{fthmintro}. 
Our work  here has been influenced   by \cite{Gi07} and \cite{Ba14}, which study the interplay between orientations and the divisors they define. 
 In Section~\ref{finsec}  we treat
   compactified   Jacobians   and   prove   
 Theorem \ref{mainintro}.

 \subsection{Graphs}
 \label{graphsec}
Throughout the paper 
 $G$ denotes a   vertex-weighted finite graph; we allow loops and multiple edges.  We denote by
 $V=V(G)$   the set of  vertices of $G$, by
   $E=E(G)$    the set of   edges of $G$ and by
   $ 
   w :V\to \N 
   $ 
   the weight function of $G$.
   We write $c(G)$ for the number of connected components of $G$.
   The {\it genus}, $g=g(G)$, of $G$,  is  
   $$g(G):=\sum_{v\in V} w(v) -|V|+|E|  +c(G).$$

We think of an edge of $G$ as the union of two {\it half-edges}, each of which has a vertex of $G$ as
    end, so that the ends of an edge $e$ are the ends of its half-edges and $e$ is a {\it loop} if the two ends coincide.
   We write $H=H(G)$ for the set of half-edges of $G$. We have a natural two-to-one surjection $H\to E$,
and we write $\{h^+_e,h^-_e\}$ for the preimage of $e\in E$.
The {\it degree}, $\deg v$, of a vertex $v$ is the number of   half-edges whose end is $v$.

For a non empty $Z\subset V$,  we write $Z^c:=V\ssm Z$. The {\it induced subgraph},
$G[Z]\subset G$, is the subgraph whose vertex-set is $Z$, whose edge-set is the set of all
edges of $G$ having both ends in $Z$, and whose
 weight function is the restriction to $Z$ of  the one of $G$.
 We set
$$
g(Z):=g(G[Z])=|E(G[Z])|-|Z|+c(G[Z])  +\sum_{v\in Z} w(v).
$$

   If $S\subset E$ is a set of edges of $G$, we write $G-S$ for the graph obtained from $G$ by removing $S$;
   notice that $G$ and $G-S$ have the same vertices, in other words $G-S$ is a so-called {\it spanning} subgraph of $G$.
   We denote by $\langle S\rangle$ the subgraph of $G$ spanned by $S$, so that  $E(\langle S\rangle)=S$ and the vertices of $\langle S\rangle$ are   the vertices adjacent to the edges in  $S$.

A {\it cut} of $G$ is a  set of edges, $S\subset E$, such that for a
    partition $V=Z\sqcup Z^c$, with $\emptyset\subsetneq Z \subsetneq V$, our
 $S$ is the set of all edges adjacent to both $Z$ and $Z^c$.
 We also write $S=E(Z,Z^c)$ for such a cut.
 For a   non empty cut $S$ we have
 $ 
 c(G)<c(G-S).
 $       
A   {\it bridge} is an edge  such that $\{e\}$ is a cut.
We denote by
$G_{br}\subset E$ the set of bridges   of $G$.
 
\begin{remark}
\label{cutrk}
 Let $S\subset E$ be a cut of $G$ and let $H\subset G$ be a subgraph.
 Then either $S\cap E(H)=\emptyset$ or $S\cap E(H)$ is a cut for $H$.
 \end{remark}

\begin{remark}
 For any $S\subset E$ we have
$ 
 g(G)\geq g(G-S),
 $ 
with equality   if and only if $S\subset G_{br}$.
\end{remark}

A   {\it {morphism}}  between two graphs, $\eta:G\to G'$, is given by two maps,
$\eta_V:V(G)\to V(G')$ and $\eta_E:E(G)\to E(G')\cup V(G')$ such that 
$\eta_E(e)$ has ends $\eta_V(v)$ and $\eta_V(w)$ for any $e\in E(G)$ whose ends are $v$ and $w$. 
We sometimes write just $\eta=\eta_E$ and $\eta=\eta_V$.

An   {\it {isomorphism}}  between two graphs, $\alpha:G\to G'$, is a morphism such that
$\alpha_V$ is a bijection,   $\alpha_E:E(G)\to E(G')$ is a bijection, and such that for every $v\in V(G)$ the weight of $\alpha_V(v)$ equals the weight of $v$. An isomorphism induces also a bijection between the half-edges of $G$ and $G'$.
An {\it automorphism} is an isomorphism of $G$ with itself.
We denote by $\Aut(G)$   the group of automorphisms of $G$.

 $G$ is {\it semistable} if it is connected,   $g(G)\geq 2$, and has no vertex of weight $0$ and degree less than $2$. 
$G$ is {\it stable} if it is semistable and has no vertex of weight $0$ and degree less than $3$. 
The set of all stable graphs of genus $g$ is denoted by $\Sg$.
 Notice that $\Sg$ is finite.

 \subsection{Edge-contractions}
 \label{consec}
Fix $S\subset E$.
The {\it (weighted) contraction} of $S$ is
a map of weighted graphs,  
$ 
\gamma: G\to G/S 
$ 
(introduced in \cite{BMV}).  Informally   $\gamma$ is
given by contracting to a vertex every edge in $S$, and such that the weight of a vertex  $v$
 of $G/S$ equals the genus of the subgraph of $G$ which gets contracted into $v$.
Rigorously, consider the subgraph, $\langle S\rangle \subset G$, spanned by the edges in $S$
and let  $\langle S\rangle =  H_1\sqcup  \ldots \sqcup H_m$ be its decomposition in connected components.
Now set
$$
V(G/S):= V(G)\ssm V(\langle S\rangle)\sqcup \{v_1,\ldots, v_m\}, \quad \quad E(G/S):=E(G)\ssm S.
$$
We have two   maps, 
\begin{equation}
\label{gammaVE} 
\gamma_V:V(G)\la V(G/S)  \quad \text{and} \quad \gamma_E:E(G)\la E(G/S)\cup  V(G/S),
\end{equation}
where  $\gamma_V$
is the identity on    
$V(G)\ssm V(\langle S\rangle)$
and maps   every vertex of $H_i$ to $v_i$, and  $\gamma_E$  is   the   identity on $E(G)\ssm S$ and maps every $e\in S$  to $v_i$
such that $e$ lies in $H_i$.
It is clear that $\gamma_V$ and $\gamma_E$ determine a morphism of graphs
$ 
\gamma: G\to G/S$, as   wanted.
Finally, the weight function $w_{/S}:V(G/S)\to \N$
is defined as follows: 
$$
w_{/S}(v)=g(\gamma^{-1}(v)).
$$
Indeed,  $\gamma^{-1}(v)$ is the subgraph of $G$ induced by
the subset $\gamma_V^{-1}(v) \subset V(G)$,
hence its genus is well defined.

For   convenience we   view the identity of $G$ as  the trivial contraction.

For $S\subset E$ we set 
\begin{equation}
 \label{G(S)}
G(S) :=G/(E\ssm S).
\end{equation}
\begin{remark}
\label{ftrivrk} 
 \begin{enumerate}[(a)]
 \item
   \label{ftriv0}
 $G$ is connected if and only if $G/S$ is connected.
 \item
  \label{ftrivg}
 $g(G)=g(G/S)$.
\item
 \label{ftriv1}
If $G$ is   stable, or semistable, so is $G/S$.
\end{enumerate}
\end{remark}
  \subsection{Posets}
 \label{posec}
A {\it poset}, $(\cP, \leq)$, or just $\cP$, is a set  partially ordered with respect to $`` \leq"$. Let $p_1,p_2\in \cP$.
We say that $p_2$ {\it covers} $p_1$ if $p_1< p_2$  and if there is no $p'\in \cP$
such that $p_1<p'<p_2$.

 Let  $(\cP,\leq_{_\cP})$ and $(\cQ,\leq_{_\cQ})$ be  two posets. We say that a map
$ 
\mu :\cP\to\cQ.
$ 
 is a {\it morphism of posets} if   $p_1\leq _{_\cP} p_2$
implies  $\mu(p_1)\leq _{_\cQ} \mu(p_2)$.
We say that $\mu$ is a {\it quotient  (of posets)} if  for any $q_1,q_2\in \cQ$
such that $q_1\leq _{_\cQ}q_2$ there exist  $p_1\in \mu^{-1}(q_1)$ and  $ p_2\in \mu^{-1}(q_2)$ such that $p_1\leq _{_\cP} p_2$.
In particular, a quotient is a surjective morphism of posets.

We will apply the following trivial lemma  a few times.
\begin{lemma}
\label{posettriv}
 Let $\cP$ be a finite poset and $\sim$ an equivalence relation on $\cP$.
Let $\pi:\cP \to \ov{\cP}=\cP/_\sim$ be the    quotient.
Assume the  following   holds
 
 For every  $ x,y\in \cP$ with $y\geq x$ and for every $y\sim y'$ there exists $x'\sim x$ such that $y'\geq x'$.
 
 Then 
 $\ov{\cP}$ is a poset as follows:
for $\ov{x},\ov{y}\in\ov{\cP}$  set $\ov{y}\geq \ov{x}$ if there exist $x'\sim x$ and $y'\sim y$ such that $y'\geq x'$.
Moreover $\pi$ is a quotient of posets.
\end{lemma}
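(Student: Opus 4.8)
The plan is to check, in order, that the proposed relation on $\ov\cP$ is well defined and is a partial order, and finally that $\pi$ is a quotient of posets. Well-definedness will be immediate: the condition ``$\ov y\geq \ov x$'' is declared by an existential quantifier ranging over the \emph{entire} classes of $x$ and $y$, so it does not depend on the chosen representatives. Reflexivity is equally immediate, taking $x'=x''=x$ and using $x\geq x$ in $\cP$. The substance lies in transitivity and antisymmetry, both of which will use the hypothesis; I expect antisymmetry, through the finiteness of $\cP$, to be the main obstacle.

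For transitivity I would argue as follows. Suppose $\ov z\geq\ov y$ and $\ov y\geq\ov x$. By definition there are $z_1\sim z$, $y_1\sim y$ with $z_1\geq y_1$, and $y_2\sim y$, $x_1\sim x$ with $y_2\geq x_1$. Since $y_1\sim y_2$, I apply the hypothesis to the inequality $y_2\geq x_1$, replacing the larger element $y_2$ by $y_1$; this yields $x_1'\sim x_1\sim x$ with $y_1\geq x_1'$. Then $z_1\geq y_1\geq x_1'$ exhibits $\ov z\geq\ov x$. Thus a single application of the hypothesis suffices, its role being precisely to transport an inequality across the common class of $y$.

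The hard part will be antisymmetry. Assume $\ov y\geq\ov x$ and $\ov x\geq\ov y$; I must show $x\sim y$. Fix witnesses $c\geq a$ with $c\sim y$, $a\sim x$ (from $\ov y\geq\ov x$) and $a'\geq c'$ with $a'\sim x$, $c'\sim y$ (from $\ov x\geq\ov y$). Starting from $u_0:=a'$ and $v_0:=c'$, so that $u_0\geq v_0$, the idea is to build recursively a weakly decreasing chain $u_0\geq v_0\geq u_1\geq v_1\geq u_2\geq\cdots$ with $u_i\sim x$ and $v_i\sim y$ for all $i$. At each step I alternately transport the two fixed inequalities across $\sim$: since $v_i\sim c$, applying the hypothesis to $c\geq a$ with $v_i$ in place of the larger element $c$ produces $u_{i+1}\sim x$ with $v_i\geq u_{i+1}$; since $u_{i+1}\sim a'$, applying it to $a'\geq c'$ with $u_{i+1}$ in place of $a'$ produces $v_{i+1}\sim y$ with $u_{i+1}\geq v_{i+1}$. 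Because $\cP$ is finite, this chain cannot be strictly decreasing forever, so two consecutive terms must coincide; as consecutive terms lie respectively in the class of $x$ and the class of $y$, their coincidence yields an element equivalent to both $x$ and $y$, whence $x\sim y$ and $\ov x=\ov y$. The finiteness input here is exactly what makes the argument terminate, and it is the only genuinely delicate point.

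Finally, $\pi$ is a quotient of posets, and I anticipate no difficulty. It is a morphism because $x\leq y$ in $\cP$ gives $\ov x\leq\ov y$ directly, taking $x,y$ themselves as representatives. It is a quotient because the very definition of $\ov y\geq\ov x$ supplies $x'\sim x$ and $y'\sim y$ with $y'\geq x'$, that is, points of $\pi^{-1}(\ov x)$ and $\pi^{-1}(\ov y)$ comparable in $\cP$, which is what the definition of a quotient demands.
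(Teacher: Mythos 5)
The paper states this as a ``trivial lemma'' and gives no proof at all, so there is nothing to compare your argument against; judged on its own, your proof is correct and complete. The only genuinely non-trivial point is antisymmetry, and you handle it properly: the alternating chain $u_0\geq v_0\geq u_1\geq v_1\geq\cdots$ with $u_i\sim x$, $v_i\sim y$ is legitimately constructed by repeatedly transporting the two fixed witness inequalities across the classes via the hypothesis, and finiteness of $\cP$ together with antisymmetry in $\cP$ forces two consecutive terms to coincide, giving an element in both classes and hence $\ov x=\ov y$. Transitivity via a single transport of $y_2\geq x_1$ to $y_1\geq x_1'$ is exactly right, and the morphism/quotient claims for $\pi$ are immediate as you say. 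One could add that the symmetric variant mentioned in the paper (transporting the smaller element instead of the larger) is proved by the mirror-image argument, but that is a remark, not a gap.
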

The lemma   holds if we switch roles between $x$ and $y$, i.e. if we assume that
  for every $x\sim x'$ there exists $y'\sim y$ such that $y'\geq x'$.

A {\it rank} on a poset $\cP$ is a map
$ 
\rho: \cP\to \N
$ 
 such that
if $p_2$  covers $p_1$ then $\rho(p_2)=\rho(p_1)+1$. Of course, $\N$ is a poset and a rank is a morphism of posets.
 A poset endowed with a rank   is called a {\it graded poset}.

\begin{defi}
\label{stratdef}
 Let $M$ be an algebraic variety and let $\cP$ be a poset.
A {\it stratification of $M$ by $\cP$} is a  partition of $M$
$$
 M=\sqcup_{p\in \cP}M_p
$$
such that the following hold for every $p,p'\in \cP$.
\begin{enumerate}
 \item
 the {\it stratum} $M_p$ is irreducible and  quasi-projective;
 \item
if $M_p\cap \overline{M_{p'}}$ is not empty, then $M_p\subset\overline{M_{p'}}$;
\item
 $M_p\subset\overline{M_{p'}}$ if and only if $p\leq p'$.
\end{enumerate}
A stratification of $M$ by $\cP$ is called {\it graded} if   the following is a rank  on $\cP$
$$
\cP\la \N; \quad \quad p\mapsto \dim M_p.
$$
Let $\sigma: M \twoheadrightarrow \cP$ be a surjective map. We call $\sigma$ a {\it (graded) stratification map}
if the fibers of $\sigma$ form a (graded) stratification of $M$ by $\cP$.
 
\end{defi}
  \subsection{Generalized orientations}
  Let $G$ be a graph and $e$ an edge of $G$. 
   An {\it orientation} on $e$ is the assignment of a direction
  so that one half-edge of $e$ is the starting half-edge   and the other
is the  ending half-edge. Accordingly, the  vertex adjacent to the starting half-edge 
 will be called the  {\it source} of $e$, and the   vertex adjacent to the ending half-edge 
 will be called the {\it target} of $e$.
  If $e$ is a loop then its base  vertex is both source and target.

   An {\it orientation}, $O$, on  $G$ is the assignment of an orientation on every edge  of $G$.
 If $x\in V$ is the source (respectively, the target)  of $e\in E$ we say that $e$ is $O${\it-outcoming} from $x$
 (resp.  $O${\it-incoming} at $x$).

A {\it generalized orientation} on    $G$ is the assignment, for every $e\in E$, of either
an orientation on $e$, or  of  both orientations on $e$; in the latter case we say that $e$ is {\it bioriented}.
So, a   bioriented edge  has both its ends as targets and sources.

For $b\in \N$ a  {\it $b$-orientation} is a generalized orientation having exactly $b$ bioriented edges. We thus recover
usual orientations  as 0-orientations  (which we shall continue to call ``orientations"  to 
  ease the terminology)

In this paper, we shall mostly be interested in the cases $b=0,1$.

\begin{defi}
\label{1ordef}
Let $G$ be a graph.

 An orientation (i.e. a 0-orientation) on $G$ is {\it totally cyclic}   
 if it has   no   {\it directed cut}  i.e.   if   every     non empty cut  $E(Z,Z^c)$  has an edge with target in $Z$ and an edge with target in $Z^c$.
  
A  1-orientation  on $G$  with bioriented edge $e$  is {\it rooted}, or $e$-{\it rooted}, if for every  $Z\subsetneq V$
such that $e\in G[Z]$,
the cut $E(Z,Z^c)$ contains an edge with target in $Z^c$.
 \end{defi}
  
We denote 
$$
 \cO^0(G):=\{O:\   O \text{ is a totally cyclic orientation on } G\}  
$$
and  $$
 \cO^1(G):=\{O:\   O \text{ is a rooted 1-orientation on } G\}.
$$
 
The  terminology  ``totally cyclic" and ``rooted"  is    motivated by \ref{F1} \eqref{F12},
 and  \ref{lmfree}, respectively.
 
Let $G$ be a cycle.
We say that $G$ is {\it   cyclically 
oriented} if it is given a totally cyclic orientation  (of course, a cycle admits 
  exactly two totally cyclic orientations).
From  \cite[Lemma 2.4.3]{CV} we have:
\begin{fact}
\label{F1}
\begin{enumerate}[(a)]
   \item
  \label{F11}
$ \cO^0(G)$ is not empty if and only if $G$ is free from bridges.
 \item  \label{F12}
Let  $G$ be connected.  An orientation    on $G$ is totally cyclic if and only if every pair of vertices   is contained in a  cyclically 
oriented cycle.
  \end{enumerate}
\end{fact}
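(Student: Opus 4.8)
The plan is to reduce both parts to a single structural principle: for a \emph{connected} graph, an orientation $O$ has no directed cut if and only if it is strongly connected, meaning that for every ordered pair of vertices $(u,v)$ there is an $O$-directed path from $u$ to $v$. I would establish this first. For the substantial implication, assume $O$ has no directed cut, fix a vertex $u$, and let $R$ be the set of vertices reachable from $u$ along $O$-directed paths. If some vertex failed to lie in $R$, then $R$ would be a proper nonempty subset of $V$; by maximality of $R$ no $O$-directed edge leaves $R$, so every edge of the nonempty cut $E(R,V\ssm R)$ has its target in $R$, a directed cut, contradicting the hypothesis. The converse is immediate, since for any cut $E(Z,Z^c)$ a directed path from a vertex of $Z$ to a vertex of $Z^c$ yields a cut edge with target in $Z^c$, and a directed path the other way yields one with target in $Z$. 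Running the same reachability argument starting from the target of a fixed edge shows, for an arbitrary graph, that $O$ is totally cyclic if and only if every edge lies on an $O$-directed cycle.

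For part (a), the forward implication is the easy direction: if $e$ is a bridge then $\{e\}$ is a cut, and however $e$ is oriented its single target lies on only one side, so $\{e\}$ is a directed cut for every orientation and $\cO^0(G)=\emptyset$. For the converse I would use Remark~\ref{cutrk} to pass to the connected case: a cut of $G$ meets each connected component in either the empty set or a cut of that component, so $O$ is totally cyclic exactly when its restriction to every component is. It then suffices to equip a connected bridgeless graph with a strongly connected orientation, which is precisely Robbins' theorem; alternatively I would argue directly from an ear decomposition, orienting the initial cycle cyclically and each open ear as a directed path (each closed ear as a directed cycle), and checking by induction that strong connectivity is preserved, whence the orientation is totally cyclic by the principle above.

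For part (b), with $G$ connected, the backward implication is direct and uses the hypothesis only for one well-chosen pair: were $E(Z,Z^c)$ a directed cut, I would pick $u\in Z$ and $v\in Z^c$ together with a cyclically oriented cycle $C$ through both; as a directed cycle meeting both sides, $C$ crosses the cut a positive and even number of times, alternately in the two directions, producing cut edges with targets on each side and contradicting directedness. For the forward implication I would invoke the principle: a totally cyclic orientation on a connected graph is strongly connected, so for any $u,v$ there are $O$-directed paths from $u$ to $v$ and from $v$ to $u$ whose concatenation is a closed $O$-directed walk passing through both vertices.

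The main obstacle is twofold. In (a) the real content is the construction of a strongly connected orientation on a bridgeless graph: I would either cite Robbins' theorem or carry out the ear-decomposition induction carefully, handling loops and closed ears so that multigraphs are covered. In (b) the delicate point is the passage from the closed directed walk produced above to an honest cyclically oriented \emph{cycle} through the prescribed pair of vertices: this extraction is automatic when $u$ and $v$ lie in a common $2$-edge-connected block, but for a merely connected $G$ with a cut vertex separating $u$ from $v$ one only obtains a closed directed walk, so the safest reading of ``cyclically oriented cycle'' here is as a directed closed walk through both vertices.
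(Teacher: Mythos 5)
The paper does not prove this statement: it is quoted as a Fact from \cite[Lemma 2.4.3]{CV}, so there is no internal proof to compare against. Your self-contained argument is the standard one and is essentially correct: the reachability argument showing that, on a connected graph, ``no directed cut'' is equivalent to strong connectivity is right (with the paper's definition, a cut all of whose edges have target on one side is exactly a directed cut); the bridge direction of (a) is immediate; the reduction to connected components via Remark~\ref{cutrk} is legitimate; and the existence half of (a) is precisely Robbins' theorem, whose ear-decomposition proof goes through for multigraphs with loops since loops never occur in any cut $E(Z,Z^c)$ and may be oriented arbitrarily.

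Your caveat on part (b) is not excessive caution --- it is a genuine and correct observation. With ``cycle'' read as a simple closed path, the forward implication of (b) is false for connected graphs with cut vertices: take two triangles glued at a vertex $w$, each oriented cyclically. This orientation is totally cyclic (every cut acquires targets on both sides), yet a vertex $u\neq w$ of one triangle and a vertex $v\neq w$ of the other lie on no common cycle, since any such cycle would need two internally disjoint $u$--$v$ paths and every $u$--$v$ path passes through $w$. So the statement must be read with ``cyclically oriented cycle'' meaning a closed directed walk (equivalently, the concatenation of the two directed paths furnished by strong connectivity), or else restricted to $2$-connected graphs. Note that this does not affect the way the Fact is actually used in the paper (Step 1 of Proposition~\ref{fsu} only needs a directed cycle through a \emph{single} vertex, and every edge of a totally cyclic orientation does lie on a genuine directed simple cycle, obtained from a shortest directed return path). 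Your backward implication of (b) is fine under either reading.
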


\begin{conv}
\label{empty}
 Assume $G$ has no edges.
The  empty  orientation will be considered  totally cyclic, so that
 $\cO^0(G)$ consists of exactly that orientation.  
 
 If $G$ consists of a single vertex,   the empty orientation will be considered rooted,  so that $\cO^0(G)= \cO^1(G)$. 
 \end{conv}
 
By definition, an orientation on a graph is totally cyclic if and only if its restriction to every connected component of $G$ is totally cyclic.

\begin{remark}
\label{0e1}
 Let $O$ be a totally cyclic orientation on a connected graph $G$. For any   $e$ of $G$, let $O_e$ be the 1-orientation  having
 $e$ as bioriented edge  and such that every remaining edge is oriented according to $O$. The  definition implies that $O_e$ is rooted.
 This gives an injection (not a surjection)
 $$
 \cO^0(G)\times E \la \cO^1(G);\quad \quad (O,e)\mapsto O_e.
 $$
\end{remark}

 \begin{lemma}
\label{LmO}
 $ \cO^1 (G)$ is not empty if and only if $G$ is connected.
\end{lemma}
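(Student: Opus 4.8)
The plan is to prove both directions of the equivalence, starting with the easier necessity and then constructing an explicit rooted orientation for sufficiency.

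\textbf{Necessity.} First I would show that if $\cO^1(G)$ is nonempty then $G$ is connected. Suppose $O$ is an $e$-rooted $1$-orientation and assume for contradiction that $G$ is disconnected. Let $G_0$ be the connected component containing the bioriented edge $e$, and let $Z=V(G_0)$. Then $Z\subsetneq V$ and $e\in G[Z]$, so the rootedness condition requires the cut $E(Z,Z^c)$ to contain an edge with target in $Z^c$. But $Z$ is a union of connected components, so $E(Z,Z^c)=\emptyset$, a contradiction. (One must also handle a component not containing $e$: take $Z$ to be the complement of such a component; since $e\in G[Z]$ still holds, the same empty-cut contradiction applies.) This forces $G$ to be connected.

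\textbf{Sufficiency.} Now suppose $G$ is connected; I would produce a rooted $1$-orientation. If $G$ has a single vertex, Convention~\ref{empty} already declares the empty orientation rooted, so assume $G$ has at least one edge. The natural strategy is to reduce to the bridgeless case using Fact~\ref{F1}\eqref{F11} and Remark~\ref{0e1}. Concretely, I would contract a spanning structure to expose a totally cyclic piece. The cleanest route: pick any edge $e$ of $G$ to be the bioriented root; consider $G/e$ or better, argue that since $G$ is connected one can choose $e$ and orient the remaining edges so that every vertex is reachable from $e$ along directed paths. The key observation is that the rootedness condition for an $e$-rooted orientation says exactly that for every $Z$ containing both ends of $e$ with $Z\neq V$, some edge of the cut points \emph{out} of $Z$; equivalently, every vertex can be reached from the bioriented edge $e$ by a directed path. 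Thus it suffices to orient $G$ as a directed graph in which every vertex is accessible from (either endpoint of) $e$. Since $G$ is connected, I can take a spanning tree rooted at an endpoint of $e$ and orient all its edges away from the root, orienting the remaining non-tree edges arbitrarily; this makes every vertex reachable from $e$, hence the orientation is $e$-rooted.

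\textbf{Main obstacle.} I expect the delicate point to be translating the cut-based definition of ``rooted'' into the reachability statement and checking it rigorously. Specifically, I must verify the equivalence: an $e$-rooted orientation is precisely one in which every vertex lies on a directed path emanating from $e$. The forward direction is a standard max-flow/min-cut style argument: if some vertex $v$ were \emph{not} reachable from $e$, let $Z^c$ be the set of vertices not reachable from the endpoints of $e$; then $Z=V\ssm Z^c$ contains both ends of $e$, is a proper subset, yet every edge of the cut $E(Z,Z^c)$ must point \emph{into} $Z^c$ would be reachable, contradiction—so in fact all cut edges point from $Z^c$ into $Z$, meaning the cut has no edge with target in $Z^c$, violating rootedness. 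Conversely, reachability of every vertex immediately yields that every admissible cut has an outgoing edge. Once this reformulation is secured, the spanning-tree construction finishes the proof cleanly; the reachability reformulation is the technical heart, but it is routine graph theory rather than a deep difficulty.
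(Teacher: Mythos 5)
Your proof is correct, and while the necessity direction coincides with the paper's (choose $Z$ to be the vertex set of the component containing the bioriented edge and observe the cut $E(Z,Z^c)$ is empty), your sufficiency argument takes a genuinely different and in fact more elementary route. The paper decomposes $G-G_{br}$ into its bridgeless components, fixes a totally cyclic orientation on each via Fact~\ref{F1}\eqref{F11}, biorients an edge of one component to make it rooted (Remark~\ref{0e1}), and then iteratively orients the bridges away from the growing connected piece. You instead take a spanning tree rooted at an endpoint $r$ of the chosen bioriented edge $e$, orient its edges away from $r$, and orient the remaining edges arbitrarily; this works, and one does not even need the reachability reformulation you flag as the technical heart (which is the paper's later Lemma~\ref{lmfree}\eqref{lmfreea}$\Leftrightarrow$\eqref{lmfreeb}, proved independently of Lemma~\ref{LmO}, so no circularity arises): for any $Z\subsetneq V$ with $e\in G[Z]$ one has $r\in Z$, and the directed tree path from $r$ to any vertex of $Z^c$ must cross the cut $E(Z,Z^c)$ for a first time via an edge with source in $Z$ and target in $Z^c$, which is exactly the condition of Definition~\ref{1ordef}. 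Your construction is shorter and also handles gracefully the case where $G-G_{br}$ has only isolated vertices (e.g.\ $G$ a tree), where the paper's recipe of picking an edge of $G_1$ to biorient needs a small adjustment; what the paper's construction buys in exchange is an explicit rooted $1$-orientation whose restriction to each bridgeless block is totally cyclic, which fits the interplay between $\cO^0$ and $\cO^1$ used elsewhere. The only blemishes in your write-up are cosmetic: the parenthetical in the necessity part is redundant given your first choice of $Z$, and the sentence in your final paragraph beginning ``yet every edge of the cut \dots'' is garbled, though the intended min-cut argument is clear and sound.
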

\begin{proof}
If $G$ admits  a  rooted 1-orientation then, 
by definition,  every cut $E(Z,Z^c)$ is non empty, hence $G$ must be connected.

Conversely, let $G$ be connected and set
$ 
G-G_{br}=G_1\sqcup G_2 \sqcup \ldots \sqcup G_n
$ 
with $G_i$ connected for $i=1,\ldots n$.
Of course, $G_i$ is bridgeless for every $i$, hence we can fix on $G_i$   a totally cyclic orientation, $O_i$.

We pick an edge $e$ of $G_1$ and consider 
  the 1-orientation on $G_1$ having
 $e$ as bioriented edge  and such that every other edge is oriented according to $O_1$. 
This is a  rooted  1-orientation, as noted in Remark~\ref{0e1}.
We fix this orientation on $G_1$  from now on, and we fix the orientations $O_2,\ldots , O_n$ on the remaining $G_i$.

Let us show how to orient   $G_{br}$ to obtain a rooted 1-orientation.
Let $B_1\subset G_{br}$ be the set of bridges adjacent to $G_1$ and, up to reordering $G_2,\ldots, G_n$,
let $G_2,\ldots , G_{n_1}$ be   adjacent to $B_1$, so that the following subgraph of $G$
$$
H_2=G_1\cup B_1  \cup G_2\cup \ldots \cup G_{n_1}
$$
is connected. Since $G$ is connected, $n_1\geq 2$. Orient every edge in $B_1$ pointing away from $G_1$.
It is easy to check that the so obtained 1-orientation on $H_2$ is rooted. If $H_2=G$ we are done. 
If not we iterate as follows.
Let $B_2\subset G_{br}$ be the set of bridges adjacent to $H_2$ and  
let $G_{n_1+1}, \ldots G_{n_2}$  be the components not contained in $H_2$ and adjacent to $B_2$, so that the following  $$
H_3=H_2\cup   B_2 \cup G_{n_1+1}\cup \ldots \cup G_{n_2}
$$
is connected.
Orient every edge in $B_2$ away from $H_2$  so that the so-obtained 1-orientation is rooted. If $H_2=G$ we stop, otherwise we iterate.
Since $G$ is connected, after a finite number, say $m$, of iterations  we get    $H_m=G$.  \end{proof}

 \subsection{Divisors of generalized orientations}
The group of divisors on $G$, written $\Div(G)$, is the free abelian  group generated by $V$.
We shall identify
$ 
\Div(G)= \Z^V 
$ 
and denote a divisor on $G$ by
$\md=\{\md_v\}_{v\in V}$.

The  degree of a divisor $\md$ is defined as $|\md|=\sum_{v\in V}\md_v$
and we write $\Div^k(G)$ for the set of divisors of   degree $k$.

If $\md,\md'\in \Div(G)$ are such that $\md_v\leq \md'_v$ for every $v\in V$, we write
$\md\leq \md'$.

If $S\subset E$, then $G$ and $G-S$ have the same vertices, hence we shall    identify
$ 
\Div(G)=\Div(G-S).
$ 

If $Z\subset V$ we write $\md_Z$ for the restriction of $\md$ to $Z$ and $|\md_Z|=\sum _{v\in Z}\md_v$.
 
To a  generalized orientation $O\in \cO^b(G) $ (recall that if $E(G)$ is not empty $b$ is the number of bioriented edges)  we associate a
  divisor,    $\md^O\in \Div(G)$,  whose 
  $v$ coordinate, for every $v\in V$, is
  defined as follows 
$$
 \md^{O}_v:=    \begin{cases}   w(v)-1+\mt^O_v   & \text{ if } E(G)\neq \emptyset \\
 w(v)-1+b & \text{ if } E(G)= \emptyset\\
 \end{cases}
$$
where
$ 
\mt^{O}_v
$ 
denotes  the number of half-edges having $v$ as target, so that $\mt^{O}:=\{\mt^{O}_v\}$ is also in $ \Div(G)$. 
  If $G$ is connected and $O\in \cO^b(G)$
we have
\begin{equation}
 \label{degto}
|\md^O|= g(G)-1+b. 
\end{equation}

For any $Z\subset V$ we  denote by $t^O(Z)$ the number of 
 edges  not contained in $G[Z]$  having   target in $Z$, and by $b(Z)$ the number of bioriented edges contained in $G[Z]$.
 Notice the following
 
\begin{equation}
 \label{tOZ}
t^O(Z)=   \sum_{z\in Z}\mt_z^O - |E(G[Z])|   -b(Z).
  \end{equation}
 The following trivial lemma generalizes \eqref{degto}.
 
\begin{lemma}
 Let $O$ be a $b$-orientation on  $G$  and let $Z\subset V$ be such that
  $G[Z]$ is connected.
  Then
 \begin{equation}
 \label{degtZ}
|\md_Z^O|= g(Z)-1+b(Z)+t^O(Z). 
\end{equation}
\end{lemma}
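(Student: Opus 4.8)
The plan is a direct substitution of the definitions, with the only point of care being the bookkeeping of ending half-edges, which is already encoded in \eqref{tOZ}. I work throughout in the main case $E(G)\neq\emptyset$, so that $\md^O_v=w(v)-1+\mt^O_v$ for every $v\in Z$. Summing this over $Z$ and separating the constant, weight, and target contributions gives
\[
|\md_Z^O|=\sum_{v\in Z}w(v)-|Z|+\sum_{v\in Z}\mt^O_v .
\]

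The first step is to eliminate the target sum using \eqref{tOZ}, rewritten as $\sum_{z\in Z}\mt^O_z=|E(G[Z])|+b(Z)+t^O(Z)$. The second step is to identify the genus of $Z$: since $G[Z]$ is connected we have $c(G[Z])=1$, so the definition of $g(Z)$ reads $g(Z)=|E(G[Z])|-|Z|+1+\sum_{v\in Z}w(v)$, or equivalently
\[
\sum_{v\in Z}w(v)-|Z|+|E(G[Z])|=g(Z)-1 .
\]
Substituting the expression for $\sum_{z\in Z}\mt^O_z$ into the display for $|\md_Z^O|$ and regrouping the weight and edge-count terms by means of this identity produces exactly $g(Z)-1+b(Z)+t^O(Z)$, which is \eqref{degtZ}.

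I do not expect any genuine obstacle: this is the computation behind \eqref{degto} localized to the induced subgraph $G[Z]$, and indeed the choice $Z=V$ recovers \eqref{degto}, since then $t^O(V)=0$ and $b(V)=b$. The single thing worth double-checking is that \eqref{tOZ} handles bioriented edges correctly, namely that an edge bioriented \emph{inside} $G[Z]$ contributes two ending half-edges to $\sum_{z\in Z}\mt^O_z$ (accounting for the summand $b(Z)$), whereas a bioriented edge lying in the cut $E(Z,Z^c)$ contributes a single ending half-edge with end in $Z$ and is counted once in $t^O(Z)$. The edgeless case $E(G)=\emptyset$ is degenerate, since connectedness of $G[Z]$ then forces $Z$ to be a single vertex, and is governed by Convention~\ref{empty}; it requires no separate computation.
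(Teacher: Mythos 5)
Your proof is correct and is essentially identical to the paper's: sum the definition of $\md^O_v$ over $Z$, substitute $\sum_{z\in Z}\mt^O_z$ via \eqref{tOZ}, and use $c(G[Z])=1$ to identify $\sum_{v\in Z}w(v)-|Z|+|E(G[Z])|$ with $g(Z)-1$. The extra remarks on bioriented-edge bookkeeping and the edgeless case are harmless additions not present in (and not needed by) the paper's argument.
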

\begin{proof}
We have  
 $$
  |\md^{O}_Z|=\sum_{z\in Z}\md_v^O=\sum_{z\in Z}(w(z) -1+ \mt_z^O)= \sum_{z\in Z}w(z) -|Z|+\sum_{z\in Z}\mt_z^O.
 $$
Now,
$g(Z)=\sum_{z\in Z}w(z) -|Z|+ |E(G[Z])|+1$ hence, by \eqref{tOZ},
  
  $ 
  |\md^{O}_Z|= g(Z)-1-|E(G[Z])|+\sum_{z\in Z}\mt_z^O = g(Z)-1+b(Z)+t^O(Z).
 $ 
 \end{proof}

The following  lemmas characterize totally cyclic and rooted orientations. They are slight generalizations of \cite[Lemma 1]{Be08} and the remark thereafter.
 
\begin{lemma}
\label{lm0} Let $O$ be a  $0$-orientation on a connected graph $G$.  
 The  following are equivalent.
\begin{enumerate}[(a)]
 \item
  \label{lm0a} 
 $O$ is totally cyclic.
 \item
 \label{lm0b}  $ t^O(Z)> 0$ 
 for every non empty     $Z\subsetneq V$. 

  \item
 \label{lm0c} 
  $ t^O(Z)> 0$
 for every non empty      $Z\subsetneq V$ with $G[Z]$   connected.

 \item
  \label{lm0d}
  $ 
  |\md^{O}_Z|>g(Z)-1 
 $ 
 for every non empty  $Z\subsetneq V$ with  $G[Z]$ connected.
 \end{enumerate}
\end{lemma}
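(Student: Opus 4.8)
The plan is to prove the cyclic chain of implications $(a)\Rightarrow(b)\Rightarrow(c)\Rightarrow(d)\Rightarrow(a)$, leaning on the formula \eqref{degtZ} to pass between the combinatorial counting statements (b), (c) and the divisorial statement (d). Since $b=0$ here, formula \eqref{degtZ} reads $|\md_Z^O| = g(Z)-1+t^O(Z)$ whenever $G[Z]$ is connected, so the equivalence $(c)\Leftrightarrow(d)$ is immediate: the inequality $t^O(Z)>0$ is literally the same as $|\md_Z^O|>g(Z)-1$ once we subtract $g(Z)-1$. Thus (d) requires no separate argument beyond invoking the lemma preceding this statement.

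First I would prove $(a)\Rightarrow(b)$. Assume $O$ is totally cyclic and take any non-empty $Z\subsetneq V$. The cut $E(Z,Z^c)$ is non-empty because $G$ is connected, and by Definition~\ref{1ordef} it contains an edge with target in $Z$. Every such edge is not contained in $G[Z]$ and has its target in $Z$, so it contributes to $t^O(Z)$; hence $t^O(Z)>0$. The implication $(b)\Rightarrow(c)$ is trivial, since (c) is just (b) restricted to those $Z$ with $G[Z]$ connected.

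The only implication requiring genuine work is $(c)\Rightarrow(a)$, and I expect this to be the main obstacle. The subtlety is that $(c)$ controls only sets $Z$ with $G[Z]$ connected, whereas the definition of totally cyclic demands that \emph{every} non-empty cut $E(Z,Z^c)$ have an edge with target in $Z$ (equivalently, is not a directed cut), with no connectedness hypothesis on $Z$. The plan is to reduce the general case to the connected case: given an arbitrary non-empty $Z\subsetneq V$, I would argue that it suffices to verify the target condition on cuts arising from connected induced subgraphs. Concretely, suppose for contradiction that some cut $E(Z,Z^c)$ is directed, say with all its edges having target in $Z^c$ (so no edge of the cut has target in $Z$), meaning $t^O(Z)=0$. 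I would then pass to a connected component of $G[Z]$: write $Z=Z_1\sqcup\cdots\sqcup Z_k$ into the vertex sets of the connected components of $G[Z]$. An edge counted by $t^O(Z_i)$ either leaves $Z$ entirely (lands in $Z^c$) or goes to another $Z_j$; in either case, because no edge of $E(Z,Z^c)$ targets $Z$ and the $Z_j$ are mutually non-adjacent within $G[Z]$, one shows $\sum_i t^O(Z_i) = t^O(Z) + (\text{internal target contributions})$, and a careful bookkeeping forces $t^O(Z_i)=0$ for at least one $i$ with $G[Z_i]$ connected, contradicting (c). The hard part is making this reduction clean: I need to check that an edge leaving a component $Z_i$ toward another component $Z_j\subset Z$ cannot secretly supply the missing target, which uses that $Z_i$ and $Z_j$ share no edges inside $G[Z]$ and that $E(Z,Z^c)$ being directed away from $Z$ prevents any external rescue. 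Once the existence of a connected $Z_i$ with $t^O(Z_i)=0$ is established, it directly contradicts (c), completing the cycle and hence the proof.
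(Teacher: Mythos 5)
Your proposal is correct and follows essentially the same route as the paper: both close the cycle by reducing an arbitrary cut to the connected components of the induced subgraph (the paper proves (d)$\Rightarrow$(a) directly by picking one component $Z$ of $G[U]$ and reading off $t^O(Z)>0$ from \eqref{degtZ}; you prove (c)$\Rightarrow$(a) by contradiction over all components, which is the same reduction in contrapositive form). The ``careful bookkeeping'' you flag as the main obstacle is in fact immediate: distinct components $Z_i, Z_j$ of $G[Z]$ share no edges by definition of induced subgraph, so every edge counted by $t^O(Z_i)$ necessarily lies in $E(Z,Z^c)$ and hence $t^O(Z_i)=0$ for \emph{every} $i$ once the cut is directed away from $Z$.
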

\begin{proof}
 \eqref{lm0a} $\Rightarrow$ \eqref{lm0b}.   
By hypothesis the cut $E(Z,Z^c)$ must have some edge with target in $Z$, hence  $ t^O(Z)> 0$.
  
  \eqref{lm0b} $\Rightarrow$\eqref{lm0c}  is obvious.
 
 \eqref{lm0c} $\Rightarrow$\eqref{lm0d}.  
By \eqref{degtZ} (with $b(Z)=0$) and by hypothesis we have 
$$
   |\md^{O}_Z|=g(Z) -1 + t^O(Z)>g(Z) -1.
$$
 
  \eqref{lm0d} $\Rightarrow$ \eqref{lm0a}.
  Let $E(U,U^c)$ be a cut in $G$,  
we must prove that $E(U,U^c)$ is not a directed cut.    
  Let $Z\subset U$ such   that $G[Z]$ is a connected component of $G[U]$.
  Of course, $E(Z,Z^c)\subset E(U,U^c)$.
By \eqref{degtZ} we have 
  $$
 t^O(Z)= |\md^{O}_Z|-(g(Z)-1)>0
  $$
  where the inequality follows by   hypothesis. Hence $E(U,U^c)$ has an edge with target in $Z$, hence in  $U$.
The same argument applied to $U^c$ shows that
 $E(U,U^c)$ has an edge with target in $U^c$. \end{proof}

\begin{lemma}
\label{lmO1} Let $O$ be a  non empty 1-orientation on    $G$  and let $e$ be its bioriented edge. 
 The  following are equivalent.
\begin{enumerate}[(a)]
 \item
  \label{lmO1a} 
 $O$ is $e$-rooted.
 \item
 \label{lmO1b} 
  $ t^O(Z)> 0$ 
for every  non empty   $Z\subset  V$ with $e\not\in G[Z]$.
  \item
 \label{lmO1c} 
  $ t^O(Z)> 0$  for every  non empty   $Z\subset  V$ such that $G[Z]$ is connected  and  $e\not\in G[Z]$.

 \item
  \label{lmO1d}
    $ 
  |\md^{O}_Z|>g(Z)-1 
 $ 
for every $Z\subsetneq V$ such that $G[Z]$ is connected.

 \end{enumerate}
\end{lemma}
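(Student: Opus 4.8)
The plan is to establish the cyclic chain of implications \eqref{lmO1a} $\Rightarrow$ \eqref{lmO1b} $\Rightarrow$ \eqref{lmO1c} $\Rightarrow$ \eqref{lmO1d} $\Rightarrow$ \eqref{lmO1a}, closely following the proof of Lemma~\ref{lm0} while keeping careful track of the unique bioriented edge $e$. The implication \eqref{lmO1b} $\Rightarrow$ \eqref{lmO1c} is immediate by restricting to those $Z$ with $G[Z]$ connected. Throughout I would exploit the identity \eqref{degtZ}, observing that for a $1$-orientation the term $b(Z)$ equals $1$ precisely when $e\in G[Z]$ and equals $0$ otherwise. The recurring technical point is the distinction between ``$e\in G[Z]$'' (both ends of $e$ lie in $Z$) and ``$e\notin G[Z]$'' (at most one end lies in $Z$), together with the observation that a bioriented edge has \emph{both} of its ends as targets.

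For \eqref{lmO1a} $\Rightarrow$ \eqref{lmO1b}, I would fix a non-empty $Z$ with $e\notin G[Z]$ (so automatically $Z\subsetneq V$) and dichotomize according to whether some endpoint of $e$ lies in $Z$. If some endpoint of $e$ lies in $Z$, then the other endpoint lies in $Z^c$, so $e$ is a cut edge of $E(Z,Z^c)$; being bioriented it has a target in $Z$, and since it is not contained in $G[Z]$ it contributes to $t^O(Z)$, forcing $t^O(Z)>0$ irrespective of rootedness. If no endpoint of $e$ lies in $Z$, then $e\in G[Z^c]$ with $Z^c\subsetneq V$, and the $e$-rooted hypothesis (Definition~\ref{1ordef}) applied to $Z^c$ produces an edge of $E(Z^c,Z)$ with target in $(Z^c)^c=Z$; such an edge is not contained in $G[Z]$, whence $t^O(Z)>0$. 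I expect this to be the main obstacle: one must recognize that the \emph{a priori} troublesome configuration—an endpoint of $e$ inside $Z$, which the rooted condition does not directly address—is handled for free exactly because $e$ is bioriented.

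For \eqref{lmO1c} $\Rightarrow$ \eqref{lmO1d}, I would again split on whether $e\in G[Z]$, for $Z\subsetneq V$ with $G[Z]$ connected. If $e\in G[Z]$ then $b(Z)=1$, and \eqref{degtZ} already gives $|\md^O_Z|=g(Z)-1+1+t^O(Z)>g(Z)-1$ with no further input. If $e\notin G[Z]$ then $b(Z)=0$, and \eqref{lmO1c} yields $t^O(Z)>0$, so \eqref{degtZ} again gives the strict inequality.

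Finally, for \eqref{lmO1d} $\Rightarrow$ \eqref{lmO1a}, I would mimic the step \eqref{lm0d} $\Rightarrow$ \eqref{lm0a} of Lemma~\ref{lm0}. Given $Z\subsetneq V$ with $e\in G[Z]$, the goal is an edge of $E(Z,Z^c)$ with target in $Z^c$. Let $W$ be a connected component of $G[Z^c]$; since both ends of $e$ lie in $Z$ we have $e\notin G[W]$, so $b(W)=0$, and \eqref{degtZ} together with \eqref{lmO1d} gives $t^O(W)>0$, i.e.\ an edge $f$ not contained in $G[W]$ with target in $W$. Because $W$ is a connected component of $G[Z^c]$, the end of $f$ outside $W$ cannot lie in $Z^c\smallsetminus W$, hence lies in $Z$; thus $f\in E(Z,Z^c)$ and its target lies in $W\subseteq Z^c$, which is exactly the rooted condition. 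This closes the chain and completes the proof.
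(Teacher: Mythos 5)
Your proof is correct and follows essentially the same route as the paper's: the cyclic chain of implications, the identity \eqref{degtZ} with the case split on whether $e\in G[Z]$, and the passage to connected components of $G[Z^c]$ for the final implication. The only difference is in \eqref{lmO1a} $\Rightarrow$ \eqref{lmO1b}, where the paper simply asserts $e\in G[Z^c]$ while you correctly note that $e\notin G[Z]$ also allows $e$ to have exactly one end in $Z$, a case you dispose of by observing that the bioriented edge then itself contributes a target in $Z$ — a slightly more careful treatment of the same argument.
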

\begin{proof}
 \eqref{lmO1a} $\Rightarrow$ \eqref{lmO1b}.   
By hypothesis $e\in G[Z^c]$. As $O$ is rooted   the cut $E(Z,Z^c)$ must have some edge with target in $Z$, hence  $ t^O(Z)> 0$.
 
   \eqref{lmO1b} $\Rightarrow$\eqref{lmO1c}  is obvious.
 
 \eqref{lmO1c} $\Rightarrow$\eqref{lmO1d}.   
 If $e\not\in G[Z]$ the proof is   the same as for Lemma~\ref{lm0}.
 If $e\in G[Z]$  we apply \eqref{degtZ}; as     $b(Z)=1$ we get
 $$
  |\md^{O}_Z|=   g(Z)  + t^O(Z) \geq g(Z)>g(Z)-1. 
 $$

  \eqref{lm0d} $\Rightarrow$ \eqref{lm0a}.
  Let $E(U,U^c)$ be a cut in $G$ with $e\in G[U]$. Let $W$ be a connected component of $G[U^c]$,
  it suffices to show that $E(U,U^c)$ contains an edge with target in $W$.
Now  \eqref{degtZ} applied to $W$        yields
 $$
  g(W) -1 + t^O(W)=  |\md^{O}_W|>g(W)-1,
 $$
by hypothesis. Hence $t^O(W)>0$,  as wanted.  \end{proof}
  \subsection{Equivalence of generalized orientations}
  \begin{defi}
  \label{defequivo}
We define   two generalized orientations, $O$ and $O'$,    on a graph $G$ 
to be {\it equivalent}, and write
 $O\sim O'$, if $\md^O=\md^{O'}$.
 
 We denote by $\oO$ the equivalence class of $O$.
 \end{defi}

\begin{remark}
\label{eqOrk}
Let $O$ and $O'$ be two $b$-orientations, with $b=0,1$. By Lemmas~\ref{lm0} and \ref{lmO1}, if $O\sim O'$ then $O$ is totally cyclic (resp. rooted) if and only if so is $O'$. 
\end{remark}
We now
 introduce  the sets of equivalence classes of totally cyclic orientations, and of rooted 1-orientations, on $G$ written
\begin{equation}
 \label{ocG}
\ocO^0(G):=\cO^0(G)/\sim  \quad \quad \text{and}  \quad \quad \ocO^1(G):=\cO^1(G)/\sim.
\end{equation}

 \begin{remark}
 \label{revpath}{\it  Equivalence of 1-orientations through   reversal of   directed paths.}
 Let $O$ be a 1-orientation whose bioriented edge $e$ has ends $v_0, v_1$.
We say that a path $P\subset G$  is $O$-directed from $e$ to $v$, with   $v\neq v_0,v_1$,
if the first edge of $P$ is $e$  and if the component of $P-e$ containing $v$ is  a directed path with $v$ as target.

 Let $P\subset G$ be an $O$-directed path from $e$ to $v_{n+1}$ as in   the picture below
 
   \begin{figure}[h]
\begin{equation*}
\xymatrix@=.4pc{
 *{\bullet} \ar@{<->}[rrr] ^(1){v_1}^(0){v_0}_(.5){e}&&& *{\bullet} \ar@{->} [rr]^(1){v_2} &&*{\bullet} \ar@{.>} [rrr]&&&*{\bullet}\ar@{->}[rr] ^(0){v_n}^(1){v_{n+1}}_(.5){e'}&& *{\bullet} 
 &&&&&&
  *{\bullet} \ar@{<-}[rr] ^(1){v_1}^(0){v_0}_(.5){e}&& *{\bullet} \ar@{<-} [rr]^(1){v_2} &&*{\bullet} \ar@{<.} [rrr]&&&*{\bullet}\ar@{<->}[rrr] ^(0){v_n}^(1){v_{n+1}}_(.5){e'}&&& *{\bullet}  \\
 &&&&&&O& &&&&&&&&&&&&&O'&&&&&&&}
\end{equation*}
\end{figure}
 
 Let $e'\subset P$ be the last edge of the path, so that the ends of $e'$ are $v_n$ and $v_{n+1}$.
 Define a new 1-orientation, $O'$ on $G$ as follows.
 Let $e'$ be the bioriented edge, reverse the orientation on every remaining edge of $P$,
 and fix on $e$ the orientation from $v_1$ to $v_0$. Notice that $P$ is an $O'$-directed path from $e'$ to $v_0$.
 Let $O'$ coincide with $O$ on   the remaining edges of $G$. 
 It is clear that $O$ and $O'$ are equivalent.
\end{remark}

\begin{lemma}
  \label{lmfree} 
 Let $O$ be a non empty 1-orientation on a connected graph $G$  and  let $e$ be its bioriented edge.
 The  following are equivalent.
\begin{enumerate}[(a)]
 \item
  \label{lmfreea} 
 $O$ is $e$-rooted.
 \item
 \label{lmfreeb} 
 For every $v\in V$ there exists an $O$-directed path from $e$ to $v$.
    \item
 \label{lmfreec} 
 For every $e'\in E$ there exists a 1-orientation $O'$  whose bioriented edge is $e'$ and such that
 $O\sim O'$.
 \end{enumerate} 
\end{lemma}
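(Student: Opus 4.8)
The plan is to establish the cycle of implications $(a)\Rightarrow(b)\Rightarrow(c)\Rightarrow(a)$, combining the combinatorics of $O$-directed paths with the divisor characterization of Lemma~\ref{lmO1}. For $(a)\Rightarrow(b)$ I would argue by reachability. Writing $v_0,v_1$ for the ends of $e$, set
$$
Z:=\{v_0,v_1\}\cup\{w\in V:\ \text{there is an }O\text{-directed path from }e\text{ to }w\}.
$$
The aim is to prove $Z=V$. If instead $Z\subsetneq V$, then $e\in G[Z]$, so $e$-rootedness supplies an edge $f$ of the cut $E(Z,Z^c)$ with target in $Z^c$; its source then lies in $Z$, and prolonging by $f$ a directed path that reaches this source exhibits the target of $f$ as reachable, contradicting that it lies in $Z^c$. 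Hence $Z=V$, which is precisely $(b)$ for every $v\neq v_0,v_1$ (the values $v\in\{v_0,v_1\}$ being vacuous for the notion of directed path).

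For $(b)\Rightarrow(c)$ the engine is Remark~\ref{revpath}: an $O$-directed path from $e$ whose last edge is a prescribed edge $e'$ produces an equivalent $1$-orientation in which $e'$ is bioriented. So it suffices to realize every $e'$ as the terminal edge of an $O$-directed path issuing from $e$. Writing $e'$ with $O$-source $u$ and $O$-target $u'$, condition $(b)$ gives an $O$-directed path $P$ from $e$ to $u$ (the case $u\in\{v_0,v_1\}$ being immediate, as then $e,e'$ already works); one checks $e'\notin P$, since a traversal of $e'$ inside $P$ would visit $u$ twice. If $u'\notin V(P)$ we append $e'$ and invoke Remark~\ref{revpath}. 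I expect the delicate configuration to be $u'\in V(P)$: then $P$ together with $e'$ contains a directed cycle $C$ through $e'$, and reversing $C$ yields an equivalent $1$-orientation (reversing a directed cycle preserves every in-degree, hence the divisor, and leaves $e$ bioriented since $e\notin C$) in which $e'$ now points towards $u$; the initial segment of $P$ up to $u'$ is untouched and is completed by the reversed $e'$, producing the desired terminal edge. A separate, analogous balancing is required when $e'$ is a loop: biorienting a loop raises a target twice, and this must be compensated by reorienting $e$ and reversing a single directed path so as to keep the divisor unchanged.

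For $(c)\Rightarrow(a)$ I would use the divisor inequality in condition~\eqref{lmO1d} of Lemma~\ref{lmO1}. If $O$ were not $e$-rooted, that lemma yields a nonempty connected $Z\subsetneq V$ with $e\notin G[Z]$ and $t^O(Z)=0$, whence the bioriented-edge count of $Z$ vanishes and, by \eqref{degtZ}, $|\md^O_Z|=g(Z)-1$. If $G[Z]$ carries an edge $e'$, then by $(c)$ some $O'\sim O$ biorients $e'$; now $G[Z]$ contains a bioriented edge of $O'$, so the corresponding term in \eqref{degtZ} is at least $1$ for $O'$ and forces $|\md^{O'}_Z|\geq g(Z)$, contradicting $|\md^{O'}_Z|=|\md^O_Z|=g(Z)-1$. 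The only remaining possibility is $Z=\{z\}$ with no loop, where $G[Z]$ has no edge; here $\mt^O_z=0$, while biorienting via $(c)$ any edge incident to $z$ (one exists by connectedness) would force $\mt^{O'}_z\geq 1$, again contradicting equivalence.

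The main obstacle is the step $(b)\Rightarrow(c)$, and within it the configuration in which the target of $e'$ already lies on every available $O$-directed path from $e$; the cycle-reversal move, together with its loop variant, is exactly what resolves this while remaining inside the equivalence class $\oO$.
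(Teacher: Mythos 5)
Your proposal follows essentially the same route as the paper's proof: the reachability argument for (a)$\Rightarrow$(b), the path reversal of Remark~\ref{revpath} for (b)$\Rightarrow$(c), and the divisor identity \eqref{degtZ} together with Lemma~\ref{lmO1} for (c)$\Rightarrow$(a); in fact you treat the degenerate configurations ($e'$ lying on or pointing back into the path, $e'$ a loop, $G[Z]$ edgeless) more carefully than the paper, whose (b)$\Rightarrow$(c) step is stated in one line. The only sub-case your cycle reversal does not literally cover is when the target $u'$ of $e'$ is the end $v_0$ of $e$ isolated in $P-e$: then the unique directed cycle through $e'$ inside $P\cup\{e'\}$ contains $e$, so instead of reversing a cycle avoiding $e$ one checks directly, by the same target-count bookkeeping as in Remark~\ref{revpath}, that biorienting $e'$, reversing $P-e$ and orienting $e$ from $v_1$ to $v_0$ preserves $\md^O$.
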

\begin{proof}
  \eqref{lmfreea} $\Rightarrow$ \eqref{lmfreeb}.   
  Let $x,y$ be the ends of $e$ and let $Z_1=\{x,y\}$.
  Since $O$ is $e$-rooted and $e\in G[Z_1]$  the set, $W_1$, of vertices in $Z_1^c$ that are targets of edges with source in $Z_1$ is not empty.
  Set $Z_2=Z_1\cup W_1$.
  If $W_1$ contains $v$ we are done. If not, we iterate as follows.
  As $O$ is rooted   the set, $W_2$, of vertices in $Z_2^c$ that are targets of edges with source in $Z_2$ is not empty.
By construction,  every vertex $w$ in $W_2$ is the target of an edge with source in $W_1$, and hence $w$ is the last vertex
of a directed path starting with $e$.
If $W_2$ contains $v$ we are done, otherwise we iterate.
Since $G$ is connected, after finitely many steps this process includes all vertices of $G$, so we are done.
  
   \eqref{lmfreeb} $\Rightarrow$ \eqref{lmfreec}.     
   Let $e'$ be oriented from $v$ to $w$ and let $P$ be an $O$-directed path from $e$ to $v$. We define $O'$ as the 1-orientation
   obtained by reversing the orientation of $P$, as defined in
   \ref{revpath}.
   
 \eqref{lmfreec} $\Rightarrow$ \eqref{lmfreea}.    
 By contradiction, suppose $O$ is not rooted. Hence there exists a   cut $E(Z,Z^c)$ directed away from $Z$ and such that
 $e\in G[Z^c]$. Up to replacing $Z$ with a subset, we can assume that $G[Z]$ is connected.
We thus have $t^O(Z)=0$ and, as $e\not\in G[Z]$,
  \begin{equation}
  \label{eqlmfreea}
   |\md^O_{Z}|=g(Z)-1+t^O(Z)=g(Z)-1.
\end{equation}
   Pick $e'\in G[Z]$ and let $O'$ be a 1-orientation with $e'$ as bioriented edge such that $O\sim O'$, which exists by hypothesis.
 As $e' \in G[Z]$ we have
$$
 |\md^O_{Z}|= |\md^{O'}_{Z}|=g(Z)+t^{O'}(Z)\geq g(Z)
$$
a contradiction with \eqref{eqlmfreea}.
\end{proof}

\subsection{The posets of bridgeless and connected subgraphs}
\label{brisec}
Let $G$ be a graph and $E$ its edge-set.
The set of all subsets of $E$,  written $\cP(E)$, will be considered as a poset 
with respect to reverse inclusion, i.e. we set
\begin{equation}
 \label{ri}
S\leq S' \quad \text{  if } \quad S'\subset S 
\end{equation}
 for any $S, S'\subset E$.
 
 We are  interested in two special sub-posets of $\cP(E)$, 
 written $\AGt$ and $\AGr$, 
 related to totally cyclic, respectively rooted,   orientations.
We saw that $\cO^0(G)\neq \emptyset$    (i.e. $G$  admits a totally cyclic  orientation)   only if $G$ is free from bridges.
We need to study  all totally cyclic orientations on all spanning subgraphs of $G$,
 so we    consider the following set
 $$
 \AGt:=\{S\subset E:  (G-S)_{br}=\emptyset\}.
 $$  
Next,   we know $\cO^1(G)\neq \emptyset$   (i.e. $G$  admits a rooted $1$-orientation) only  if $G$ is connected,
hence we set
 $$
\AGr:=\{S\subset E:   \  G-S  \text{ is connected}\}.
  $$  
  Of course, $\AGr$ is empty if $G$ is not connected.

  \begin{lemma}
 \label{rkBP} Let $b=0,1$ and assume $G$ connected if $b=1$.
 Then  $\AGb$ is a graded poset with respect to    \eqref{ri},  with rank  function mapping $S$ to 
 $ g(G-S).$
\end{lemma}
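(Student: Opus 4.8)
The plan is to treat the two assertions---that $\AGb$ is a poset and that $S\mapsto g(G-S)$ is a rank on it---separately, the first being essentially free. Since $\AGb$ is by definition a subset of the poset $(\cP(E),\leq)$ of Subsection~\ref{brisec}, it inherits the reverse-inclusion order and is automatically a poset, so no work is needed there. For the rank, I would first record that $g(G-S)\in\N$ because every graph has non-negative genus, and then expand the genus formula for the spanning subgraph $G-S$, whose edge-set is $E\ssm S$, to obtain $g(G-S)=\sum_{v\in V}w(v)-|V|+(|E|-|S|)+c(G-S)$. For $S_2\subsetneq S_1$ in $\AGb$ this yields $g(G-S_2)-g(G-S_1)=|S_1\ssm S_2|-\bigl(c(G-S_1)-c(G-S_2)\bigr)$, which is $\geq 0$ since $G-S_1\subset G-S_2$ forces $c(G-S_1)\geq c(G-S_2)$. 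The whole lemma then reduces to the single claim that this difference equals $1$ exactly when $S_2$ covers $S_1$.

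I would dispatch the case $b=1$ first, as it is straightforward. Here every $S\in\AGr$ has $G-S$ connected, so $c(G-S)=1$ and the rank is, up to an additive constant, just $-|S|$. The key observation is that adjoining one edge to a connected graph preserves connectedness: thus for any $e\in S_1\ssm S_2$ the set $S_1\ssm\{e\}$ again lies in $\AGr$, and if $|S_1\ssm S_2|\geq 2$ it is a strict intermediate contradicting that $S_2$ covers $S_1$. Hence a cover forces $|S_1\ssm S_2|=1$ and the rank jumps by $1$.

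The case $b=0$ is the heart of the argument and where I expect the main difficulty. Given a cover $S_2$ of $S_1$ in $\AGt$, I would set $F=S_1\ssm S_2$ and contract each connected component of the bridgeless graph $G-S_1$ to a point, producing a multigraph $\oG$ whose edge-set is $F$ and whose first Betti number $|F|-c(G-S_1)+c(G-S_2)$ equals the rank difference above. The crucial---and most delicate---ingredient is the standard fact that contracting a bridgeless subgraph neither creates nor destroys cycles through the remaining edges; granting this, $\oG$ is bridgeless because $S_2\in\AGt$, and for every $S'$ with $S_2\subset S'\subset S_1$ the graph $G-S'$ is bridgeless if and only if the spanning subgraph of $\oG$ on the edges $S_1\ssm S'$ is. Since a bridgeless graph with an edge contains a cycle, the Betti number is $\geq 1$, so the rank difference is $\geq 1$. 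If it were $\geq 2$, I would choose a cycle $Z\subset\oG$; it uses a proper nonempty subset of $F$, so $S'=S_1\ssm E(Z)$ satisfies $S_2\subsetneq S'\subsetneq S_1$ and $G-S'$ is bridgeless, contradicting that $S_2$ covers $S_1$. Thus the difference is exactly $1$, which completes the verification of the rank and hence the lemma. The main obstacle is precisely justifying the contraction step and the reduction of bridgelessness to $\oG$; the rest is bookkeeping with the genus formula.
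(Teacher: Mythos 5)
Your argument is correct. The paper in fact states Lemma~\ref{rkBP} without proof, so there is nothing to compare it to line by line; your write-up supplies a complete justification. The reduction of both cases to the identity $g(G-S_2)-g(G-S_1)=|S_1\ssm S_2|-\bigl(c(G-S_1)-c(G-S_2)\bigr)$ is the right bookkeeping, and the $b=1$ case is dispatched exactly as one would expect. For $b=0$, the one step you flag as delicate --- that bridgelessness of $G-S'$ for $S_2\subset S'\subset S_1$ is equivalent to bridgelessness of the corresponding spanning subgraph of $\oG=(G-S_2)/E(G-S_1)$ --- is in fact already covered by the paper's own toolkit: Lemma~\ref{ftriv}\eqref{ftrivcut} gives that cuts (hence bridges) of a contraction $G/S$ correspond to cuts of $G$ supported away from $S$, and since $G-S_1$ is a bridgeless spanning subgraph of $G-S'$, no edge of $G-S_1$ can be a bridge of $G-S'$; together these yield $(G-S')_{br}=\emptyset$ if and only if $(G-S')/E(G-S_1)$ is bridgeless. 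With that in place, your cycle-splitting argument (a bridgeless graph with an edge contains a cycle; if the Betti number of $\oG$ were at least $2$, a cycle would use a proper nonempty subset of $S_1\ssm S_2$ and produce a strict intermediate element of $\AGt$) correctly pins the rank jump at exactly $1$.
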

  In particular,
  $ \AGt$ has $E$  as unique minimal element   and  $G_{br}$ as unique maximal element, with 
 $ 
g(G-E) =\sum_{v\in V}w(v)$  and $g(G-G_{br})= g(G)$.
 If $G$ is connected, then 
$ \AGr$ has $\emptyset$  as  unique maximal  element, and  
its  minimal elements   are the    $S\subset E$ such that $G-S$ is a spanning tree. 

   \begin{remark}
\label{nobri} For any $S\subset E$ we have   $\At_{G-S}\ha  \AGt$.
If $S=G _{br}$ the injection  induces   an
 identification 
$ 
 \AGt=  \At_{G-G _{br}} . $ 
Indeed, for every $S\in  \AGt$ we have $G _{br}\subset S$,
 hence $S$ is also an element of $  \At_{G-G _{br}}$. \end{remark}

 \subsection{Posets   of  orientations.}
 
 We shall be considering  generalized orientations defined on various spanning subgraphs of  a fixed graph $G$. 
To   keep track of these subgraphs    we shall use subscripts, as follows.
 Given   $S\subset E$, we shall denote by
  $O_S$   a generalized orientation on  $G-S$. A generalized orientation with no subscript will be defined on the whole graph.
 
\begin{defi}
\label{podef}
Let $G$ be a graph and let $S,T\subset E(G)$. Given two   generalized orientations
 $O_S$ on $G-S$ and  $O_T$ on $G-T$ we set
 $$
 O_S\leq O_T \quad \text{  if } \quad  S\leq T \quad \text{and} \quad (O_{T})_{|G-S}= O _S.
$$
\end{defi}
It is easy to check that the above is a partial order.

We introduce, for a fixed graph $G$, the set of all totally cyclic orientations
on all spanning subgraphs of $G$. 
 \begin{equation}
\label{O1} 
 \OPt_G:=  \bigsqcup_{S\in  \AGt}{{\cO}^0}(G-S).
\end{equation}

Similarly, for rooted $1$-orientations
 \begin{equation}
\label{Ob} 
 \OPb_G:=  \bigsqcup_{S\in  \AGr}{{\cO}^1}(G-S).
\end{equation}
The notation ``$\OP$" indicates that  $  \OPt_G$ and $\OPb_G$ are endowed with the poset structure introduced 
in 
 Definition~\ref{podef}.

Finally, we   consider   orientations up to equivalence:

\begin{equation}
\label{O3}
\OGt  := \bigsqcup_{S\in  \AGt}{{\ocO}^0}(G-S) \quad  {\text{ and }}  \quad
\OGb  := \bigsqcup_{S\in  \AGr}{{\ocO}^1}(G-S).
\end{equation}
We will define a poset structure on $\OGt$ and $\OGb$.
We fix the following 
\begin{conv}
\label{convo}
Let $S\subset E(G)$ and consider the graph $G(S)$ defined in   \eqref{G(S)}.
Fix a  $b$-orientation, $\tilde{O}$, on $G(S)$. We identify $E(G(S))=E(\langle S \rangle)=S,
$ 
hence we can  define a  $b$-orientation, $\tilde{O}^*$, on $\langle S \rangle$ as follows.
Let $e\in S$. If $e$ is $\tilde{O}$-bioriented  then $e$ gets $\tilde{O}^*$-bioriented.
If   $e$ is not a loop  of $G(S)$ then   $e$ gets $\tilde{O}^*$-oriented according to $\tilde{O}$.
If $e$ is   a loop  of  $G(S)$   we choose an arbitrary orientation on $e$.
We refer to  $\tilde{O}^*$ as a  $b$-orientation {\it induced by $\tilde{O}$}.
\end{conv}

 \begin{lemma}
 \label{quoto}  Let $b=0,1$ and  
        $S,T\in  \AGb$   with $T\subset S$. Then for every
 $O_S\in \cO^b(G-S)$  there exists   ${O}_T\in \cO^b(G-T)$
  such that  ${O}_T\geq O_S$.

Moreover, if  $O _S\sim O'_{S}$ for some $O_{S}'\in \cO^b(G-S)$, there exists $ {O'}_T\in \cO^b(G-T)$  such that ${O'}_T\geq O_{S}'$
 and ${O'}_T\sim O_T$.
  \end{lemma}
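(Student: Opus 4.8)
The content of the lemma is an extension statement. Since $T\subset S$, the graph $G-T$ is obtained from $G-S$ by reinstating the edges of $S\ssm T$, and by Definition~\ref{podef} an element $O_T\in\cO^b(G-T)$ with $O_T\geq O_S$ is exactly an orientation of $G-T$ whose restriction to $G-S$ is $O_S$, i.e.\ one obtained from $O_S$ by assigning a direction to each edge of $S\ssm T$. So the task is to choose such directions keeping the extension in $\cO^b(G-T)$, and then to do so compatibly with $\sim$. I would treat $b=1$ and $b=0$ separately and deal with the ``moreover'' clause at the end; the case $S=T$ being trivial, I assume $S\ssm T\neq\emptyset$.

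For $b=1$ this is immediate from the path description of rootedness. Here $G-S$ and $G-T$ are connected ($S,T\in\AGr$) and the bioriented edge $e$ of $O_S$ lies in $E\ssm S\subset E\ssm T$, hence is still an edge of $G-T$. I would orient the edges of $S\ssm T$ in any way and call the result $O_T$; it is again a $1$-orientation with bioriented edge $e$. As $O_T$ restricts to $O_S$ and $O_S$ is $e$-rooted, Lemma~\ref{lmfree}~\eqref{lmfreeb} provides, for every $v\in V$, an $O_S$-directed path from $e$ to $v$ inside $G-S$; this path uses only edges of $G-S$ and is therefore $O_T$-directed in $G-T$. Thus the same characterization, now applied to the connected graph $G-T$, shows that $O_T$ is $e$-rooted, so $O_T\in\cO^1(G-T)$ and $O_T\geq O_S$. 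In particular any extension works.

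For $b=0$ an arbitrary extension need not be totally cyclic: the edges of $S\ssm T$ joining distinct connected components of $G-S$ must be oriented coherently. The plan is to encode this by contracting. Let $\Gamma$ be the graph obtained from $G-T$ by contracting each connected component of $G-S$ to a vertex; its vertices are the components of $G-S$, its edges are $S\ssm T$, and the edges of $S\ssm T$ internal to a component become loops. Since each component of $G-S$ is connected, a bridge of $\Gamma$ would lift to a bridge of $G-T$; as $G-T$ is bridgeless ($T\in\AGt$), so is $\Gamma$, and by Fact~\ref{F1}~\eqref{F11} it carries a totally cyclic orientation $O_\Gamma$. Orient $S\ssm T$ by $O_\Gamma$ (loops arbitrarily) and let $O_T$ be the resulting orientation of $G-T$, which restricts to $O_S$. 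To check $O_T\in\cO^0(G-T)$, take a nonempty cut $E(Z,Z^c)$ of $G-T$. If some component $C$ of $G-S$ has both $Z\cap C$ and $Z^c\cap C$ nonempty, then the edges of $C$ between these two sets form a nonempty cut of $C$ contained in $E(Z,Z^c)$, and since $O_S$ is totally cyclic on $C$ it has a target in $Z\cap C$ and one in $Z^c\cap C$. Otherwise $Z$ and $Z^c$ are unions of components of $G-S$, so $E(Z,Z^c)$ contains no edge internal to a component and is precisely a nonempty cut of $\Gamma$, which has targets on both sides because $O_\Gamma$ is totally cyclic. In either case $E(Z,Z^c)$ is not a directed cut, so $O_T$ is totally cyclic.

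For the ``moreover'' clause I would reuse the very same directions on $S\ssm T$. Given $O'_S\sim O_S$ in $\cO^b(G-S)$, let $O'_T$ agree with $O'_S$ on $E\ssm S$ and with $O_T$ on $S\ssm T$; then $O'_T\geq O'_S$ by construction. That $O'_T\in\cO^b(G-T)$ follows verbatim from the arguments above: for $b=1$ the path argument used only that $O'_S$ is rooted, and for $b=0$ the cut argument used only that $O'_S$ is totally cyclic on the components of $G-S$ together with the fixed $O_\Gamma$ on $S\ssm T$. Finally, adding the edges of $S\ssm T$ with identical directions contributes the same number of incoming half-edges at every vertex, so $\mt^{O_T}_v-\mt^{O'_T}_v=\mt^{O_S}_v-\mt^{O'_S}_v$ for all $v$; since $\md^{O_S}=\md^{O'_S}$ forces $\mt^{O_S}=\mt^{O'_S}$, we obtain $\md^{O_T}=\md^{O'_T}$, i.e.\ $O_T\sim O'_T$. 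The degenerate situations where $G-S$ has no edge are covered by Convention~\ref{empty}. The only real difficulty is the $b=0$ existence — coherently orienting the edges of $S\ssm T$ across components of $G-S$ — and this is exactly where the hypothesis $T\in\AGt$ enters, through the bridgelessness of the contracted graph $\Gamma$.
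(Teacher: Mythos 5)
Your proof is correct, and for $b=0$ it is essentially the paper's argument in different clothing: your graph $\Gamma$, obtained from $G-T$ by contracting the connected components of $G-S$, is exactly the paper's $G(S)$ (after the paper's preliminary reduction to $T=\emptyset$), and in both cases one orients $S\ssm T$ by a totally cyclic orientation of this bridgeless contraction and then checks that any directed cut of the extension would have to live entirely in $S\ssm T$ and hence descend to a directed cut of the contraction. Your verification by cases on whether the cut separates a component of $G-S$ is an equivalent reformulation of the paper's appeal to Lemma~\ref{ftriv}. The one genuine deviation is the case $b=1$: the paper again routes through a totally cyclic orientation on $G(S)$ and checks rootedness via cuts, whereas you observe that \emph{any} extension of an $e$-rooted $O_S$ to $G-T$ is $e$-rooted, because the $O_S$-directed paths from $e$ to every vertex supplied by Lemma~\ref{lmfree} survive unchanged in $G-T$. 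This is a clean simplification that also makes the ``moreover'' clause for $b=1$ immediate. Your handling of the equivalence statement (reuse the same orientation on $S\ssm T$, so that $\mt^{O_T}-\mt^{O'_T}=\mt^{O_S}-\mt^{O'_S}$) matches the paper's.
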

\begin{proof} 
We first assume $b=0$.
Up to replacing $G$ with $G-T$, we can assume $T=\emptyset$
and $G$ bridgeless.
Hence   $G(S)$ is bridgless  and  we can fix  a   totally cyclic orientation,  $\tilde{O}$, on it.
Using   \ref{convo}, $\tilde{O}$ induces an orientation,  $\tilde{O}^*$, on $\langle S \rangle$.
Then $O_T:=O_S\cup \tilde{O}^*$  is an orientation on $G$. We claim $O_T$   is totally cyclic.
 By contradiction, let $F\subset E(G)$ be an $O_T$-directed cut of $G$. Then 
$F\cap E(G-S)=\emptyset$, as $G-S$ admits no $O_S$-directed cuts.
Therefore $F\subset S$, hence, using Lemma~\ref{ftriv} \eqref{ftrivcut}, $F$ is a directed cut of $G(S)$, which is not possible.
Finally, if $O_S\sim O_{S}'$, we construct  $O_{T}'$ using the same orientations $\tilde{O}$ and  $\tilde{O}^*$ used to construct  $O_T$.
Obviously,
$\md^{O_T}=\md^{O_{T}'}$,
hence   we are done.

The proof for $b=1$ follows the same steps. 
Up to replacing $G$ with $G-T$  we can assume $T=\emptyset$.
Now  $G(S)$ is bridgeless. Indeed, if $e\in S$ is a bridge of $G(S)$ it has to be a bridge of $G$, and hence $G-S$ is not connected, which is impossible by hypothesis.
We can thus fix  a   totally cyclic $0$-orientation,  $\tilde{O}$, on $G(S)$, and let
$\tilde{O}^*$ be a $0$-orientation on $\langle S \rangle$ induced by $\tilde{O}$.
Set  $O_T:=O_S\cup \tilde{O}^*$; arguing as for $b=0$ one checks that   $O_T$  is a rooted 1-orientation on $G$.
The rest of the proof  is the same as for $b=0$.
 \end{proof}

 \begin{prop}
 \label{poo}
Let $b=0,1$. Then
   $\OG$    is    partially ordered  as follows. 
For       $\oO_S $
 and  $\oO_T$ we
  set
$
\oO_S\leq \oO_T$   if $S\leq T$ and
if one of the two equivalent conditions below holds.
\begin{enumerate}[(i)]
\item
\label{exist}
There exist $O'_{S}\in \oO_S$ and  $O'_{T}\in \oO_T$ such that
$
(O'_{T})_{|G-S}= O' _{S}.
$
\item
\label{forall}
For every $O'_{S}\in \oO_S$ there exists  $O'_{T}\in \oO_T$  
such that
$
(O'_{T})_{|G-S}= O' _{S}.$
 \end{enumerate}
Moreover, the forgetful map, $
 \OG \to  \AGb$, sending $\oO_S$ to $S$,
 is a  quotient  of poset, and the map sending
  $ \oO_S$ to $g(G-S) $ 
is a rank   on    $\OG$.
 \end{prop}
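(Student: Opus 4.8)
The plan is to verify the three assertions in turn: that the proposed relation is a well-defined partial order, that the two conditions \eqref{exist} and \eqref{forall} are equivalent, and that the forgetful map is a quotient of posets with the stated rank. The central tool throughout will be Lemma~\ref{quoto}, which provides the lifting of orientations and of equivalence classes needed to pass between representatives.

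First I would establish the equivalence of \eqref{exist} and \eqref{forall}. The implication \eqref{forall} $\Rightarrow$ \eqref{exist} is immediate once we know $\oO_S$ is non-empty, which it is. For \eqref{exist} $\Rightarrow$ \eqref{forall}, suppose $O'_S, O'_T$ realize \eqref{exist} and let $O''_S \in \oO_S$ be arbitrary, so $O''_S \sim O'_S$. Here is where the second (``moreover'') part of Lemma~\ref{quoto} does the work: applied with the roles set so that the restriction $(O'_T)_{|G-S} = O'_S$ exhibits $O'_T \geq O'_S$, it produces an $O''_T \in \cO^b(G-T)$ with $O''_T \geq O''_S$ and $O''_T \sim O'_T$. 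Then $O''_T \in \oO_T$ and $(O''_T)_{|G-S} = O''_S$, which is exactly \eqref{forall}. Thus the defining relation does not depend on the chosen representatives, so it is genuinely a relation on $\OG$.

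Next I would check the partial-order axioms. Reflexivity is clear (take $S = T$ and the same representative). Antisymmetry follows from antisymmetry of $\leq$ on $\AGb$ together with the fact that if $S = T$ and some representatives agree on $G - S = G - T$, then the two classes coincide. Transitivity is the one point requiring care: given $\oO_S \leq \oO_T \leq \oO_U$, I would use condition \eqref{forall} to first fix a representative $O'_S \in \oO_S$, lift it to $O'_T \in \oO_T$ with $(O'_T)_{|G-S} = O'_S$, then apply \eqref{forall} for $\oO_T \leq \oO_U$ to this specific $O'_T$ to obtain $O'_U \in \oO_U$ with $(O'_U)_{|G-T} = O'_T$. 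Since $S \leq U$ in $\AGb$ (reverse inclusion is transitive) and $(O'_U)_{|G-S} = ((O'_U)_{|G-T})_{|G-S} = (O'_T)_{|G-S} = O'_S$, condition \eqref{exist} holds for the pair $(\oO_S, \oO_U)$. The flexibility of having both equivalent formulations \eqref{exist}/\eqref{forall} available is precisely what makes this chaining clean, and I expect this to be the main (though still routine) obstacle.

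Finally, for the forgetful map $\OG \to \AGb$: it is a morphism of posets by the very definition of $\leq$ on $\OG$ (which requires $S \leq T$). To see it is a \emph{quotient}, take $S \leq T$ in $\AGb$ and any $\oO_S$ in the fiber over $S$; picking a representative $O_S$ and invoking the first part of Lemma~\ref{quoto} yields $O_T \in \cO^b(G-T)$ with $O_T \geq O_S$, so $\oO_T \geq \oO_S$ sits in the fiber over $T$, as required by the definition of quotient in Section~\ref{posec}. For the rank claim, I would note that the composite $\OG \to \AGb \to \N$ sending $\oO_S \mapsto g(G-S)$ is, by Lemma~\ref{rkBP}, a rank on $\AGb$; it then suffices to observe that a cover $\oO_S \lessdot \oO_T$ in $\OG$ forces a cover $S \lessdot T$ in $\AGb$. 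This last point reduces to checking that one cannot insert an intermediate class, which follows because any strictly intermediate $S \leq S'' \leq T$ in $\AGb$ can be populated by a lifted orientation class via Lemma~\ref{quoto}, contradicting that $\oO_T$ covers $\oO_S$. Hence $g(G-S)$ increases by exactly one along covers, giving the rank.
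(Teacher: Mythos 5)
Your handling of the equivalence of \eqref{exist} and \eqref{forall}, of the partial-order axioms, and of the quotient property is correct and essentially matches the paper's (very terse) proof: the paper likewise derives \eqref{exist} $\Rightarrow$ \eqref{forall} and the quotient property from Lemma~\ref{quoto}, and obtains the partial order by applying Lemma~\ref{posettriv} to $\OPGb\to\OG$, whereas you verify reflexivity, antisymmetry and transitivity directly --- the two routes are interchangeable. Two cosmetic points: surjectivity of the forgetful map (non-emptiness of the fibers $\ocO^b(G-S)$) should be cited from Fact~\ref{F1} and Lemma~\ref{LmO}; and the ``moreover'' clause of Lemma~\ref{quoto} literally concerns the lift constructed there, so to transport a \emph{given} $O'_T$ with $(O'_T)_{|G-S}=O'_S$ along $O''_S\sim O'_S$ one should repeat its divisor computation with $O''_T:=O''_S\cup(O'_T)_{|S\ssm T}$ and invoke Remark~\ref{eqOrk} to see that $O''_T$ is again totally cyclic, resp.\ rooted.

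The rank argument, however, has a genuine gap. To show that a cover $\oO_S<\oO_T$ in $\OG$ forces $T$ to cover $S$ in $\AGb$, you take an intermediate $S<S''<T$ and ``populate'' level $S''$ by lifting $\oO_S$ via Lemma~\ref{quoto}. That lemma does produce some $\oO_{S''}$ with $\oO_S<\oO_{S''}$, but the lift is built from an auxiliary totally cyclic orientation on a contracted graph chosen with no reference to $\oO_T$, so nothing guarantees $\oO_{S''}\leq\oO_T$ --- and without that second inequality you have not contradicted the covering relation. What is actually needed is an interpolation: starting from a representative $O_T$ with $(O_T)_{|G-S}=O_S$, one must show that, after possibly replacing $O_T$ by an equivalent orientation whose restriction to $G-S$ still represents $\oO_S$, the restriction $(O_T)_{|G-S''}$ lies in $\cO^b(G-S'')$. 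This is not automatic: the naive restriction of a totally cyclic orientation to a spanning subgraph in $\AGt$ can acquire a directed cut (take two vertices joined by four parallel edges, three oriented one way, and restrict to two edges oriented the same way), so the choice of representative genuinely matters. The paper dismisses this point with ``the rest of the statement is clear'' and supplies no argument to compare against, but the argument you wrote down does not close it.
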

\begin{proof} 
 Lemma~\ref{quoto} yields that \eqref{exist} implies  \eqref{forall}, and the converse is obvious.
Lemma~\ref{posettriv}  yields that we   have a partial order on $ \OG$.
The two forgetful maps are onto by Fact~\ref{F1} and Lemma~\ref{LmO}, and they are quotients by  Lemma~\ref{quoto}.
The rest of the statement is clear.  
 \end{proof}

\begin{remark}
 If $\oO_S\leq \oO_T$ then 
  $\md^{O_S} \leq   \md^{O _{T}} $, but the converse is not true. See Figure 1,
  where all vertices have weight $1$,   $T=\emptyset$ and $S$ consists of the bottom edge on the right of the first graph.
\begin{figure}[h]
\begin{equation*}
\xymatrix@=.5pc{
&&&&&&&&&&&&&&\\
&&&*{\bullet}\ar@{<-} @/^ 1.4pc/[rrrr]^(.02){2}\ar@{->}[rrrr]\ar@{<-} @/_1.4pc/[rrrr]_{ }&&&&*{\bullet} \ar@{<-} @/^ 1.2pc/[rrrr]^(1.1){3}\ar@{->} [rrrr]^(1){\;\; 1}\ar@{<-} @/_1.1pc/[rrrr]&&&&*{\bullet}   
&&&&  *{\bullet}\ar@{->} @/^ 1.4pc/[rrrr]^(.06){1}\ar@{<-}[rrrr]\ar@{->} @/_1.4pc/[rrrr]_{ }&&&&*{\bullet} \ar@{<-} @/^ 1.2pc/[rrrr]^(1.1){3}\ar@{->} [rrrr]^(1){\;\; 1} &&&&*{\bullet}    
  &&&  &&&&&& 
  \\
  \\
 &&&&&&&&\md^{O_T}& &&&&&&&&&&&\md^{O_S}&&&&&&&}
\end{equation*}
\caption{$\md^{O_S}\leq\md^{O_T}$ but $\oO_S\not\leq \oO_T$}
\end{figure}
 \end{remark}

Using Remark~\ref{nobri}  and similarly to it, we have
 \begin{remark}
\label{nobrio} 
For any $S\subset E$ we have  $\OPt_{G-S}\subset  \OPt_G$.
If $S=G _{br}$ we have two    
 identifications
$$
  \OPt_G=\OPt_{G-G _{br}}\quad \quad \quad \quad \OGt={\overline{\mathcal{OP}}^0}_{G-G _{br}}.
$$
\end{remark}
\begin{remark}
\label{noinjdeg} 
Consider the map
\begin{equation}
 \label{OGmap}
\OGt \la \Div(G);  \quad \quad O_S\mapsto \md^{O_S}. 
\end{equation} 
Its restriction to ${\overline{\mathcal{O}}^0}(G-S)$ is injective for every $S\in  \AGt$, 
yet,   the   map is not injective.
See   Figure 2, where $S$ and $T$ are    the dotted edges.
\begin{figure}[h]
\begin{equation*}
\xymatrix@=.5pc{
&&&&&&&&&&&&&&\\
&&&*{\bullet}\ar@{<-} @/^ 1.4pc/[rrrr]^(.02){1}\ar@{->}[rrrr]\ar@{.} @/_1.4pc/[rrrr]_{S}&&&&*{\bullet} \ar@{<-} @/^ 1.2pc/[rrrr]^(1.1){3}\ar@{->} [rrrr]^(1){\;\; 1}\ar@{<-} @/_1.1pc/[rrrr]&&&&*{\bullet}   
&&&&  *{\bullet}\ar@{->} @/^ 1.4pc/[rrrr]^(.06){1}\ar@{<-}[rrrr]\ar@{->} @/_1.4pc/[rrrr]_{ }&&&&*{\bullet} \ar@{<-} @/^ 1.2pc/[rrrr]^(1.1){3}\ar@{->} [rrrr]^(1){\;\; 1}\ar@{.} @/_1.1pc/[rrrr]_{T} &&&&*{\bullet} 
  &&&  &&&&&& 
 }
\end{equation*}
\caption{$\md^{O_S}=\md^{O_T}$ but $\oO_S\not\sim \oO_T$}
\end{figure}

\end{remark}

 \section{Functoriality under edge-contractions}
     \label{secfun}
In this section   we establish some combinatorial results, interesting on their own, needed in the algebro-geometric setting of Section~\ref{finsec}.
  As we shall see, there is a correspondence between edge-contractions and 
   degenerations of curves.
Therefore we here  study the  functorial behaviour    of generalized orientations
  with respect to edge-contractions.  
 \subsection{Contractions of stable graphs}
Recall that $\Sg$ denotes the set of stable graphs of genus $g$, and edge-contractions are defined  in Subsection~\ref{consec}. We begin with a simple   result, for which we use the notation  \eqref{G(S)}.

\begin{lemma}
\label{ftriv}
 Let   $S\subset E(G)$ and   $H:=G/S$.
 Let $T\subset E(H)$.
 Then
 
\begin{enumerate}[(a)]
  \item
  \label{ftriv2}
 $H-T =(G-T)/S$.
  \item
  \label{ftriv3}
  $H(T)=G(T)/S=G(T)$.
    \item
  \label{ftrivcut}
$T$ is a cut of $H$ if and only $T$  is a cut of $G$.
   \item
  \label{ftriv4}
 $H_{br}=\emptyset$ if and only if $G_{br}\subset S$.
 
\end{enumerate}
\end{lemma}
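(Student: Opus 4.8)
The plan is to prove the four equivalences of Lemma~\ref{ftriv} directly from the definition of the weighted contraction $\gamma: G \to H = G/S$, unwinding the defining formulas for $V(H)$, $E(H)$, $\gamma_V$, and $\gamma_E$. The key structural fact I will use throughout is that contracting $S$ leaves the edge-set $E(H) = E(G) \ssm S$ intact and only collapses the vertices lying in connected components of $\langle S \rangle$; thus any $T \subset E(H)$ is canonically a subset of $E(G) \ssm S \subset E(G)$, and I may speak of $T$ as edges of $G$ and of $H$ simultaneously. I expect parts \eqref{ftriv2} and \eqref{ftriv3} to be essentially formal bookkeeping, and the substance of the lemma to lie in \eqref{ftrivcut} and \eqref{ftriv4}.

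For \eqref{ftriv2}, I would observe that both $H - T$ and $(G-T)/S$ have the same vertex-set (since removing edges never changes vertices, and $S \cap T = \emptyset$ so the components of $\langle S \rangle$ are unaffected by deleting $T$) and the same edge-set $E(G) \ssm (S \sqcup T)$; matching up the weight functions is immediate because $\gamma^{-1}(v)$ is the same induced subgraph in either order of operations. Part \eqref{ftriv3} then follows by applying \eqref{G(S)} and \eqref{ftriv2}: $H(T) = H/(E(H) \ssm T) = (G/S)/((E \ssm S) \ssm T)$, and since contracting $S$ and then contracting $(E \ssm S)\ssm T$ amounts to contracting $E \ssm T$ in a single step, this equals $G/(E \ssm T) = G(T)$; the reading $G(T)/S$ records the same composite with the contraction of $S$ performed first, which is harmless as $S \subset E \ssm T$.

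The heart of the argument is \eqref{ftrivcut}. Here I would use that $\gamma_V: V(G) \to V(H)$ is surjective and that a cut is determined by a bipartition of the vertices into $Z \sqcup Z^c$ together with the requirement that its edges are exactly those joining the two parts. Given a bipartition $V(H) = \oZ \sqcup \oZ^c$ realizing $T$ as a cut of $H$, I pull it back along $\gamma_V$ to a bipartition $Z = \gamma_V^{-1}(\oZ)$ of $V(G)$; because every edge of $S$ has both ends in a single component $H_i$ that maps to a single vertex of $H$, no edge of $S$ crosses this pulled-back partition, so the crossing edges of $G$ are precisely $T$. Conversely, a bipartition of $V(G)$ making $T$ a cut must assign the same side to all vertices of each component $H_i$ (otherwise some edge of $S$ would cross, but cuts by definition contain \emph{all} crossing edges and $T \cap S = \emptyset$), hence descends to a bipartition of $V(H)$ with crossing set $T$. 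The main obstacle is precisely this compatibility check — ensuring that ``$T$ contains all $G$-crossing edges'' forces the partition to be constant on contracted components — but Remark~\ref{cutrk} applied to the subgraph $\langle S \rangle$ handles it cleanly, since $T \cap E(\langle S \rangle) = \emptyset$.

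Finally, \eqref{ftriv4} reduces to \eqref{ftrivcut} applied to singleton edges. A bridge of $H$ is an edge $e \in E(H) = E(G) \ssm S$ with $\{e\}$ a cut of $H$; by \eqref{ftrivcut} this holds iff $\{e\}$ is a cut of $G$, i.e. iff $e$ is a bridge of $G$. Hence $H_{br} = G_{br} \ssm S$, and so $H_{br} = \emptyset$ iff $G_{br} \subset S$, which is the claim. I expect no real difficulty here once \eqref{ftrivcut} is in hand; the only point to note is that $G_{br} \cap S$ disappears under contraction while bridges outside $S$ survive as bridges, which is exactly what the cut-correspondence delivers.
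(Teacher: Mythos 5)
Your proposal is correct, and for parts \eqref{ftriv2}, \eqref{ftriv3} and \eqref{ftriv4} it runs along the same lines as the paper's proof: identify vertex- and edge-sets directly for \eqref{ftriv2}, compute with the quotient notation for \eqref{ftriv3}, and deduce \eqref{ftriv4} from the earlier parts. Two differences are worth noting. First, the paper reduces everything to the case $S=\{e\}$ and then identifies the sets by hand, whereas you treat general $S$ in one pass; both work, and yours is if anything cleaner, though you should make explicit (as you do only in passing) that the weight comparison in \eqref{ftriv2} rests on the components of $\langle S\rangle$ being the same in $G$ and in $G-T$ because $T\cap S=\emptyset$. Second, and more substantively, for \eqref{ftrivcut} the paper argues in one line via connectivity: by \eqref{ftriv2}, $H-T=(G-T)/S$, which is connected iff $G-T$ is. Your argument instead works directly with the definition of a cut as the full crossing set of a vertex bipartition: you pull the bipartition back along $\gamma_V$ (no edge of $S$ crosses, so the crossing set is exactly $T$) and, conversely, observe that a bipartition of $V(G)$ realizing $T$ as a cut must be constant on each component of $\langle S\rangle$ and hence descends. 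This is a genuinely different, and arguably more faithful, route: the paper's connectivity criterion addresses whether $T$ separates, while the notion of cut in Subsection~\ref{graphsec} also requires $T$ to be \emph{all} edges crossing some bipartition, which is precisely what your bipartition correspondence tracks. Your consequent identity $H_{br}=G_{br}\ssm S$ then gives \eqref{ftriv4} immediately, where the paper simply declares it trivial.
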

 
\begin{proof}
It suffices to assume $S=\{e\}$;   let $x,y\in V$ be the ends of $e$.
Denote by $v_e\in H$ the vertex to which $e$ is contracted; we have natural identifications
 $$
 E(H)=E(G)\ssm\{e\}\quad \text { and } \quad  V(H)=V(G)\cup \{v_e\} \ssm\{x,y\}.
 $$
Let us prove \eqref{ftriv2}. Using the above identities and the fact that $e\not\in T$, we   have   natural identifications
(viewed as equalities): 
 $$
 E(H-T)= E(H)\ssm T= E(G)\ssm (T\cup \{e\} )=E(G-T)\ssm  \{e\}=E\Bigr(\frac{G-T}{e}\Bigl)
 $$
 and, since   $V(H-T)=V(H)$
 $$
V(H-T) =V(G)\cup \{v_e\} \ssm\{x,y\}=V(G-T)\cup \{v_e\} \ssm\{x,y\}=V\Bigr(\frac{G-T}{e}\Bigl).
 $$
It is clear that the above identifications 
  induce a natural isomorphism
 between  $H-T$ and $(G-T)/e$.    \eqref{ftriv2} is proved. 
 
\eqref{ftriv3}.
We have
 $$
 H(T)=\frac{H}{ E(H)\ssm T }=\frac{G/e} {E(G)\ssm (e \cup T)}=\frac{G }{ (E(G)\ssm T)\cup  e}=\frac{G(T) }{e}=G(T).
 $$

\eqref{ftrivcut}.  By  \eqref{ftriv2} we have   $H-T=(G-T)/S$,  which  is connected if and only if $G-T$ is connected.   
  
\eqref{ftriv4}. Follows trivially from the preceeding parts.
\end{proof}

 For   two graphs, $G$ and $G'$,   we define the    {\it edge-contraction} relation:  
\begin{equation}
 \label{pograph}
 G'\geq G  \quad  \text {if } \quad  G'=G/S    \  \text{ for some } S\subset E(G).
\end{equation}
 Edge-contraction   is easily seen to be a partial order  on  the set of all graphs.

\begin{prop}
\label{rkSg}
 The set  $\Sg$, endowed with the edge-contraction relation defined in \eqref{pograph}, 
 is a graded poset with respect to the following rank 
 $$
 \Sg\la \N:\quad \quad G\mapsto  3g-3-|E(G)|.
$$
\end{prop}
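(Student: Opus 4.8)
The plan is to verify directly that the proposed map $\rho(G) := 3g-3-|E(G)|$ is a well-defined rank for the edge-contraction order; the fact that this order is a partial order has already been recorded above, so only the rank axiom and the codomain $\N$ remain. First I would check that $\rho$ takes nonnegative values, i.e. that $|E(G)|\le 3g-3$ for every stable $G$ of genus $g$. Since a stable graph is connected we have $c(G)=1$, so the genus formula reads $g=\sum_{v}w(v)-|V|+|E|+1$; combining this with $2|E|=\sum_{v}\deg v$ gives
\[
3g-3-|E| \;=\; \sum_{v\in V}\bigl(3w(v)+\deg v-3\bigr).
\]
Each summand is nonnegative: if $w(v)\ge 1$ it is at least $3-3=0$, and if $w(v)=0$ then stability forces $\deg v\ge 3$, so the term is again $\ge 0$. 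Hence $\rho(G)\ge 0$ and $\rho$ indeed maps into $\N$.

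Next I would control the behaviour of $\rho$ along the order. From the definition $E(G/S)=E(G)\ssm S$, so $|E(G/S)|=|E(G)|-|S|$ and therefore $\rho(G/S)=\rho(G)+|S|$; in particular any nontrivial contraction strictly raises $\rho$, and by Remark~\ref{ftrivrk} the contracted graph again lies in $\Sg$ (its genus is preserved and stability is inherited). It then remains to show that a covering relation $G<G'$ forces the contracted set to have a single element. Suppose $G'=G/S$ with $|S|\ge 2$, and pick $e\in S$. The contraction of $S$ factors as the contraction of $e$ followed by the contraction of the images of $S\ssm\{e\}$ in $G/e$, so that $G/S$ is a contraction of $G/e$ along a set of $|S|-1\ge 1$ edges. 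By Remark~\ref{ftrivrk} the graph $G/e$ lies in $\Sg$, and the edge count yields $\rho(G)<\rho(G/e)<\rho(G')$; since the number of edges is an isomorphism invariant, $G/e$ is a genuinely distinct element strictly between $G$ and $G'$, contradicting that $G'$ covers $G$. Hence $|S|=1$ and $\rho(G')=\rho(G)+1$, which is exactly the rank axiom, so $\Sg$ is a graded poset.

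The only point requiring care is the factorization of a multiple-edge contraction into successive single-edge contractions, together with the claim that the intermediate graph $G/e$ is a genuinely different element of $\Sg$; both follow from the definition of the weighted contraction in \S\ref{consec} and from Lemma~\ref{ftriv} and Remark~\ref{ftrivrk}, the distinctness being automatic from the strict drop in edge number at each step. Everything else is a routine consequence of the identity $|E(G/S)|=|E(G)|-|S|$ and the nonnegativity computation above, so I do not expect any serious obstacle.
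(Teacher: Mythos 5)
Your proof is correct and follows essentially the same route as the paper: the covering relation is shown to force $|S|=1$ by exhibiting an intermediate stable graph (you contract a single edge $e\in S$ first; the paper contracts a nonempty proper subset $S'\subsetneq S$), and the rank axiom then follows from $|E(G/S)|=|E(G)|-|S|$. The only difference is that you supply an explicit proof of the bound $|E(G)|\leq 3g-3$ via the identity $3g-3-|E|=\sum_{v}(3w(v)+\deg v-3)$, whereas the paper simply cites this as well known.
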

\begin{proof}
It is well known that for every $G\in \Sg$
we have $|E(G)|\leq 3g-3$.

Let us prove that $\Sg$ is graded.
Let $G,H\in \Sg$ such that $H$ covers $G$.
Hence $H=G/S$ for some non empty $S\subset E(G)$.
We claim $|S|=1$. Indeed, if $|S|\geq 2$  there exists a non empty $S'\subsetneq S$.
But then by Remark~\ref{ftrivrk}    $G/S'\in \Sg$ and
$ 
H>G/S'>G,
$ 
a contradiction. Therefore $|S|=1$ and $|E(H)|=|E(G)|-1$ as wanted.
\end{proof}

\subsection{Bridgeless and connected subgraphs.}
We now study the behaviour of   $ \AGt$ and $ \AGr$, introduced in Subsection~\ref{brisec},  under edge-contractions.
Let {\sc graphs} be the category whose objects are graphs and whose morphisms are  contractions.
Let {\sc posets}  be the category whose objects are posets and whose morphisms are morphisms of posets.
For $b=0,1$ we have a map between  the objects of these categories,
\begin{equation}
\label{APP}
 \Ab: \{\text{{\sc graphs}}\} \la\{\text{{\sc posets}}\}; \quad G \mapsto \AGb.
\end{equation}
Using this map, we shall define  two functors from  {\sc graphs} to {\sc posets},
a covariant functor, written $(\Ab, \Ab_*)$, and a contravariant functor, written $(\Ab, \cA^{b*})$, so that $\Ab_*$ and $\cA^{b*}$ are
 the functor maps defined on morphisms.
 
\begin{lemma}
\label{forward}
Let $b=0,1$.   
For any    $\gamma:G\to H=G/S_0$ and   any $S\in    \AGb$ set
 $$
 \gamma_* S :=S\ssm  S_0.
 $$
 Then     the following hold:

\begin{enumerate}[(a)]
\item
 \label{flm0}
$ \gamma_* S \in   \Ab_H$.
 \item
 \label{flm1}
 If $T\in    \AGb $ is such that $S\leq T$, then $\gamma_*S\leq \gamma_*T$.
 \item
 \label{forward2}
Let ${ \delta}:H \to  J$ be a contraction of $H$. Then $({ \delta}\circ \gamma)_*={ \delta}_*\circ \gamma_*$.
\end{enumerate}
 In other words, the following is a covariant functor
$$
(\Ab, \Ab_*):\text{{\sc graphs}}\la  \text{{\sc posets}}
$$
 where 
  $\Ab_*(\gamma)(S)=  \gamma_*S$ for every  $\gamma:G\to H$ and $S\in \AGb$.
 \end{lemma}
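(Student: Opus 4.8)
The plan is to verify the three assertions \eqref{flm0}, \eqref{flm1}, and \eqref{forward2} in order, and then observe that together they establish that $(\Ab,\Ab_*)$ is a covariant functor. Throughout I would lean on Lemma~\ref{ftriv}, which describes precisely how the operations $G\mapsto G-T$, $G\mapsto G(T)$, and the bridge/connectivity conditions transform under a contraction $\gamma:G\to H=G/S_0$. The key identification is Lemma~\ref{ftriv}\eqref{ftriv2}, namely $H-T=(G-T)/S_0$ for $T\subset E(H)=E\ssm S_0$, which I expect to be the workhorse for the whole argument.

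For part \eqref{flm0} I would treat the two cases $b=0,1$ in parallel, since both come down to a connectivity/bridge statement about $H-\gamma_*S$. Given $S\in\AGb$, note first that $\gamma_*S=S\ssm S_0\subset E(H)$ is a legitimate edge-set of $H$. Now apply Lemma~\ref{ftriv}\eqref{ftriv2} with $T=\gamma_*S$ to get $H-\gamma_*S=(G-\gamma_*S)/S_0$. For $b=1$: by Remark~\ref{ftrivrk}\eqref{ftriv0} a contraction preserves connectivity in both directions, so $H-\gamma_*S$ is connected iff $(G-\gamma_*S)$ is; and since $S_0\subset S$ we have $G-S=(G-\gamma_*S)-S_0$, so connectivity of $G-S$ forces connectivity of $G-\gamma_*S$, giving $\gamma_*S\in\AGr_H$. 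For $b=0$: by Lemma~\ref{ftriv}\eqref{ftriv4}, $(H-\gamma_*S)_{br}=\emptyset$ iff $(G-\gamma_*S)_{br}\subset S_0$; and since $S\in\AGt$ means $(G-S)_{br}=\emptyset$, every bridge of $G-\gamma_*S$ must lie in $S\ssm\gamma_*S=S_0$ (removing the further edges $S_0$ cannot create bridges among the remaining edges while leaving $G-S$ bridgeless), which is exactly the required containment.

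Part \eqref{flm1} is the easy monotonicity check: recall the poset order on $\AGb$ is reverse inclusion \eqref{ri}, so $S\leq T$ means $T\subset S$. Then $\gamma_*T=T\ssm S_0\subset S\ssm S_0=\gamma_*S$, which is exactly $\gamma_*S\leq\gamma_*T$ in reverse inclusion; hence $\Ab_*(\gamma)$ is a morphism of posets. For part \eqref{forward2}, if $\delta:H\to J=H/S_1$ with $S_1\subset E(H)=E\ssm S_0$, then the composite $\delta\circ\gamma$ contracts the edge-set $S_0\sqcup S_1$, so $(\delta\circ\gamma)_*S=S\ssm(S_0\cup S_1)$; on the other hand $\delta_*(\gamma_*S)=(S\ssm S_0)\ssm S_1=S\ssm(S_0\cup S_1)$, and these agree. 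Functoriality on identities is immediate since the trivial contraction has $S_0=\emptyset$. Combining \eqref{flm0}, \eqref{flm1}, and \eqref{forward2} gives the functor statement. I expect the only genuine subtlety to be the bridge bookkeeping in the $b=0$ case of \eqref{flm0}: one must argue carefully that deleting the extra edges $S_0$ from the bridgeless graph $G-S$ does not retroactively turn a previously-non-bridge edge of $G-\gamma_*S$ into a bridge lying outside $S_0$, which is where invoking Lemma~\ref{ftriv}\eqref{ftriv4} rather than arguing by hand keeps the proof clean.
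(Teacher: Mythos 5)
Your proof is correct and follows essentially the same route as the paper: the identity $H-\gamma_*S=(G-\gamma_*S)/S_0$ from Lemma~\ref{ftriv}\eqref{ftriv2}, the observation that any bridge of $G-\gamma_*S$ must lie in $S_0$ (resp.\ that $G-\gamma_*S$ is connected because it contains the connected spanning subgraph $G-S$), and the trivial set-theoretic checks for (b) and (c). One small slip: nothing in the hypotheses forces $S_0\subset S$, so $S\ssm\gamma_*S$ equals $S\cap S_0$ rather than $S_0$; this is harmless, since $S\cap S_0\subset S_0$ is all that the application of Lemma~\ref{ftriv}\eqref{ftriv4} requires, and connectivity of $G-\gamma_*S$ follows from that of its spanning subgraph $G-S$ without any containment assumption.
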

\begin{proof}
We have, by Lemma~\ref{ftriv}\eqref{ftriv2}
 $$
 H- \gamma_*S= H-(S\ssm  S_0) =\frac{G-  (S\ssm  S_0)}{S_0}.
 $$
If $b=0$  we
  must check   $H- \gamma_*S$ has no bridges. As 
  $G-S$ has no bridges any bridge of $G-  (S\ssm  S_0)$ must lie in $S_0$, hence 
  its quotient by $S_0$ is bridgeless, and we are done.
If  $b=1$ we
  must prove   $H- \gamma_*S$ is connected.
As $G-S$ is connected so is $G-  (S\ssm  S_0)$, hence so is its quotient. \eqref{flm0} is proved.
 
 \eqref{flm1} and \eqref{forward2} are obvious.
\end{proof}
Recall that $ \AGt$ and $ \AGr$ are   graded posets. 
Now, the map  $\gamma_*$ does not preserve the gradings. Indeed, let  $e\in E(G)\ssm G_{br}$.
  Set $S=S_0=\{e\}$ so that $\gamma_*S=\emptyset$.
  We have $g(G-S)=g(G)-1$  and $g(H-\gamma_*S)=g(H)=g(G)$.
 By contrast,  the ``pull-back" map, with the associated contravariant functor, defined below,  does preserve the grading.
 
\begin{lemma}
\label{flm}
Let $b=0,1$.   
For any   $\gamma:G\to H=G/S_0$ and   $T\in  \Ab_H$ 
define   $ \gamma^* T \subset E(G)$   as follows
\begin{equation}
 \label{}
 \gamma^* T: =\begin{cases}
T\cup(G-T)_{br} & \text{ if } b=0\\
T &  \text{ if }b=1. 
\end{cases}
\end{equation}
  
 Then     the following hold:
\begin{enumerate}[(a)]
 \item
  \label{flm0}
 $ \gamma^* T \in  \AGb$ and   $g(H-T)=g(G-\gamma^*T)$.
 \item
 \label{flm1}
 If $R\in   \Ab_H$ is such that $R\leq T$, then $\gamma^*R\leq \gamma^*T$.
 \item
 \label{flm2}
Let $\delta:H \to  J$ be a contraction of $H$. Then $(\delta\circ \gamma)^*=\gamma^*\circ\delta^*$.

   \end{enumerate}
    In short, the following is a grading-preserving,   contravariant functor
$$
(\Ab, \cA^{b*}):\text{{\sc graphs}}\la  \text{{\sc posets}}
$$
 where 
  $\cA^{b*}(\gamma)(T)=  \gamma^*T$ for every   $\gamma:G\to H$ and $T\in \Ab_H$.
   \end{lemma}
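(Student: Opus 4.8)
The plan is to verify the three functorial properties separately for $b=0$ and $b=1$, noting that the case $b=1$ is essentially trivial since $\gamma^* T = T$, while the case $b=0$ requires understanding how the bridge-completion operation $T \mapsto T \cup (G-T)_{br}$ interacts with contraction. First I would establish part \eqref{flm0}. For $b=1$, I need $G - \gamma^*T = G-T$ to be connected; since $H-T$ is connected by hypothesis and $H = G/S_0$, this follows from Remark~\ref{ftrivrk}\eqref{ftriv0} applied to the contraction $G-T \to (G-T)/S_0 = H-T$ (using Lemma~\ref{ftriv}\eqref{ftriv2}). The grading statement $g(H-T) = g(G-\gamma^*T)$ then follows from Remark~\ref{ftrivrk}\eqref{ftrivg}, since contraction preserves genus and $\gamma^* T = T$. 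For $b=0$, I need $(G - \gamma^*T)_{br} = \emptyset$; by construction $\gamma^*T = T \cup (G-T)_{br}$, so removing from $G$ all of $T$ together with every bridge of $G-T$ leaves a bridgeless graph — this is the standard fact that deleting all bridges yields a bridgeless graph. The grading equality for $b=0$ is the key computation: I would use the formula for genus together with Remark~\ref{ftrivrk}\eqref{ftrivg} to compare $g(H-T)$ and $g(G-\gamma^*T)$, exploiting that $H-T = (G-T)/S_0$ and that removing bridges preserves genus.

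Next I would prove part \eqref{flm1}, the order-preservation. Recall the order on these posets is reverse inclusion \eqref{ri}, so $R \leq T$ means $T \subset R$. For $b=1$ this is immediate since $\gamma^*$ is the identity. For $b=0$, I must show $T \subset R$ implies $\gamma^*T \subset \gamma^*R$, i.e.\ $T \cup (G-T)_{br} \subset R \cup (G-R)_{br}$. Since $T \subset R$, it suffices to control the bridges: I would argue that any bridge of $G-T$ not already in $R$ must be a bridge of $G-R$. This uses the monotonicity of the bridge locus under edge deletion, combined with Remark~\ref{cutrk} on how cuts restrict to subgraphs.

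Then part \eqref{flm2}, compatibility with composition, asserts $(\delta \circ \gamma)^* = \gamma^* \circ \delta^*$ for $\delta: H \to J$. For $b=1$ both sides are the identity map on subsets, so there is nothing to prove. For $b=0$, writing $J = H/S_1$ and $\delta^*T' = T' \cup (H - T')_{br}$, I would need to show that applying the bridge-completion twice (once at level $H$, once at level $G$) agrees with applying it once for the composite contraction. The content is that a bridge of $H-T'$ pulls back to (or is already accounted for as) a bridge at the $G$ level after completing, so the two operations compose correctly; here Lemma~\ref{ftriv}\eqref{ftriv4}, characterizing when contractions kill all bridges, is the relevant tool.

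The main obstacle I expect is the grading statement for $b=0$ in part \eqref{flm0} and the bridge-bookkeeping in \eqref{flm2}. The subtlety is that $\gamma^* T$ for $b=0$ is \emph{not} simply $T$ but is enlarged by the bridges of $G-T$, precisely so that the rank function $S \mapsto g(G-S)$ is preserved — whereas $\gamma_*$ in the previous lemma failed to preserve grading. I would verify the genus equality carefully using the genus formula and the fact that deleting bridges leaves genus unchanged (the Remark following Remark~\ref{cutrk}), so that $g(G - \gamma^*T) = g(G-T) = g((G-T)/S_0) = g(H-T)$, where the middle equality is genus-invariance under contraction and the outer equalities use bridge-deletion invariance and Lemma~\ref{ftriv}\eqref{ftriv2}. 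Once this identification is in hand, the contravariant functoriality is a matter of tracking that bridge-completion commutes appropriately with the successive contractions, which I would reduce to the single-edge case $S_0 = \{e\}$ as in the proof of Lemma~\ref{ftriv}.
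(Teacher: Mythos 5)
Your proposal is correct and follows essentially the same route as the paper: the grading claim in (a) is the identical chain $g(G-\gamma^*T)=g(G-T)=g\bigl((G-T)/S_0\bigr)=g(H-T)$, and (b) reduces, as in the paper, to a containment of bridge sets. The only divergence is in (c), where the paper omits the direct bridge-bookkeeping and instead deduces it from the later characterization of $\gamma^*T$ as the smallest element of $\AGb$ mapping to $T$ under $\gamma_*$ (Proposition~\ref{fupr}), whereas you sketch a direct verification via the cut-correspondence of Lemma~\ref{ftriv}; both work, and your version of (b) (bridges of $G-T$ not in $R$ remain bridges of $G-R$) is marginally more self-contained than the paper's, which first shows $(G-T)_{br}\subset S_0$ to obtain the stronger inclusion $(G-T)_{br}\subset (G-R)_{br}$.
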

 
\begin{proof}
The only nontrivial claim of \eqref{flm0} is the last, i.e.    that $\gamma^*$ preserves the rank.
We provide the proof in case $b=0$, which  trivially gives also  the proof for   $b=1$.
$$
g(H-T)=g\Bigr(\frac{G-T}{S_0}\Bigl)=g(G-T)=g\Bigr((G-T)-(G-T)_{br}\Bigl)=g (G-\gamma^*T), 
$$
 where we used Lemma~\ref{ftriv}\eqref{ftriv2} in  the first equality,
and  that  contractions and bridge-removals   preserve the genus in the second and third equality.
 
\eqref{flm1} is obvious if $b=1$. Let   $R\in   \At_H$   such that  $T\subset R$. We must prove $\gamma^*T\subset \gamma^*R$.
It is clearly enough  to prove 
 $ 
 (G-T)_{br}\subset (G-R)_{br}.
 $ 
 
Since $(H-T)_{br}=\emptyset$  and, by  Lemma~\ref{ftriv}\eqref{ftriv2},  $H-T=(G-T)/S_0$,  we have $(G-T)_{br}\subset S_0$.
 Hence $(G-T)_{br}\cap R=\emptyset$. Therefore, as $G-R\subset G-T$, we have $ (G-T)_{br}\subset (G-R)_{br}$ as wanted.
 
 We omit the direct proof of \eqref{flm2},  which   follows   easily from \ref{fupr}\eqref{fupr2}.
 \end{proof}
 
\begin{prop}
 \label{fupr} Let $b=0,1$.  
 Fix   a contraction $\gamma:G\to H=G/S_0$. Let  $S\in  \AGb$ and $T\in    \Ab_H$.  
Then
\begin{enumerate}[(a)]
 \item
  \label{fupr1}
 $
 \gamma_*  \gamma^* T=T$ (equivalently, $\Ab_*(\gamma)\cA^{b*}(\gamma)=\operatorname{id}_{\Ab_H}$).
  
  \item
  \label{fuprb}
$T\subset \gamma_*S  \Leftrightarrow \gamma^*T\subset S$.
\item
  \label{fupr2}
$\gamma^*T$ is the smallest (by inclusion) element of $ \AGb$ whose image under $\gamma_*$  equals $T$.
   \item
 \label{fupr3}
$\Ab_*( \gamma): \AGb \to \Ab_H  $ is   a quotient of  posets.
  \item
 \label{fupr4}
 If $S_0\subset G_{br}$ then $\Ab_*( \gamma): \AGb \to \Ab_H  $is an isomorphism.
\end{enumerate}
 \end{prop}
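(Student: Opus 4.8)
The plan is to prove the five claims of Proposition~\ref{fupr} in an order that lets each build on its predecessors, treating the two cases $b=0,1$ in parallel and reducing $b=0$ to the (easier) analysis of bridges. First I would establish \eqref{fupr1}, $\gamma_*\gamma^*T=T$, directly from the definitions. Recall $\gamma_*R=R\ssm S_0$ and, for $b=0$, $\gamma^*T=T\cup(G-T)_{br}$. So $\gamma_*\gamma^*T=(T\cup(G-T)_{br})\ssm S_0$. Since $e\notin S_0$ for every $e\in T$ (because $T\subset E(H)=E(G)\ssm S_0$), removing $S_0$ does not touch $T$; and the key input is that $(G-T)_{br}\subset S_0$. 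This last containment was already proved inside Lemma~\ref{flm}: since $(H-T)_{br}=\emptyset$ and $H-T=(G-T)/S_0$ by Lemma~\ref{ftriv}\eqref{ftriv2}, any bridge of $G-T$ must be contracted by $S_0$. Hence $(G-T)_{br}\ssm S_0=\emptyset$ and we are left with $T$. For $b=1$ we have $\gamma^*T=T$ and $\gamma_*\gamma^*T=T\ssm S_0=T$ for the same reason.

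Next I would prove the equivalence \eqref{fuprb}, $T\subset\gamma_*S\Leftrightarrow\gamma^*T\subset S$, as this is the technical heart that makes \eqref{fupr2} and \eqref{fupr3} fall out. For the forward direction, suppose $\gamma^*T\subset S$; applying the monotone map $\gamma_*$ (Lemma~\ref{forward}\eqref{flm1}) and using \eqref{fupr1} gives $T=\gamma_*\gamma^*T\subset\gamma_*S$. For the reverse direction, assume $T\subset\gamma_*S=S\ssm S_0$. Then $T\subset S$, so in the case $b=1$ we immediately get $\gamma^*T=T\subset S$. In the case $b=0$ I must additionally show $(G-T)_{br}\subset S$; but I already know $(G-T)_{br}\subset S_0$ from the argument above, and it remains to observe that the bridges of $G-T$ that lie in $S_0$ are forced into $S$. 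Here the hypothesis $S\in\AGt$, i.e.\ $(G-S)_{br}=\emptyset$, is what I expect to use: every bridge of $G-T$ either already lies in $S$ or, if not, would persist as a bridge of $G-S$ (since $G-S\subset G-T$ and removing further edges can only create bridges among the remaining ones), contradicting $(G-S)_{br}=\emptyset$. Making this last implication airtight is where I expect the genuine friction to be, so I would argue it carefully: a bridge $e$ of $G-T$ with $e\notin S$ survives in $G-S$ and remains a bridge there, which is impossible.

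With \eqref{fuprb} in hand, the remaining claims are formal. For \eqref{fupr2}, I note that $\gamma^*T$ indeed satisfies $\gamma_*\gamma^*T=T$ by \eqref{fupr1}, so it is \emph{some} preimage; and if $S\in\AGb$ is any element with $\gamma_*S=T$, then in particular $T\subset\gamma_*S$, so \eqref{fuprb} gives $\gamma^*T\subset S$, proving $\gamma^*T$ is the smallest such element under reverse-inclusion order \eqref{ri} (recall $S\leq S'$ means $S'\subset S$, so ``smallest'' here means largest as a set, and I will state the order direction explicitly to avoid confusion). For \eqref{fupr3}, surjectivity of $\Ab_*(\gamma)$ is immediate from \eqref{fupr1}, and monotonicity from Lemma~\ref{forward}\eqref{flm1}; to see it is a quotient of posets in the sense of Subsection~\ref{posec}, take $T_1\leq T_2$ in $\Ab_H$ and lift them to $\gamma^*T_1$ and $\gamma^*T_2$, which satisfy $\gamma^*T_1\leq\gamma^*T_2$ by Lemma~\ref{flm}\eqref{flm1} and map to $T_1,T_2$ by \eqref{fupr1}. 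Finally, for \eqref{fupr4}, if $S_0\subset G_{br}$ then contracting $S_0$ removes only bridges, so by the grading computation in Lemma~\ref{flm}\eqref{flm0} the genus is preserved edge-by-edge and the assignment $T\mapsto\gamma^*T$ provides a two-sided inverse to $\gamma_*$: one checks $\gamma^*\gamma_*S=S$ for $S\in\AGb$ using that $(G-S)_{br}=\emptyset$ forces the bridge-correction term in $\gamma^*$ to reattach exactly the edges of $S_0$ that were removed by $\gamma_*$. The main obstacle throughout is bookkeeping the bridge-set $(G-T)_{br}$ under contraction by $S_0$; once the containment $(G-T)_{br}\subset S_0$ and the bridgelessness of $G-S$ are combined, everything else is formal poset manipulation.
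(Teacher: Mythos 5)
Your proof is correct and follows essentially the same route as the paper: the containment $(G-T)_{br}\subset S_0$ via Lemma~\ref{ftriv} for part (a), the bridge-persistence argument ($G-S\subset G-T$, so a bridge of $G-T$ not in $S$ would survive as a bridge of $G-S$) for part (b), the formal deductions of (c) and (d) from (a), (b) and Lemma~\ref{flm}, and injectivity of $\gamma_*$ from $S_0\subset S$ (resp.\ $S\cap S_0=\emptyset$) for part (e). One small slip worth fixing: in (c) your derivation $\gamma^*T\subset S$ shows that $\gamma^*T$ is the smallest preimage \emph{as a set} (hence maximal for the reverse-inclusion order \eqref{ri}), whereas your parenthetical asserts the opposite; the paper uses ``smallest'' in the set-theoretic sense, so the mathematics is unaffected.
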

\begin{proof}
\eqref{fupr1}, \eqref{fuprb} and \eqref{fupr2} are  obvious if $b=1$, so
assume $b=0$.  We have 
 $ 
 \gamma_*  \gamma^* T=\gamma_*(T\cup (G-T)_{br})=\bigr(T\cup (G-T)_{br}\bigl) \ssm S_0.
 $ 
 By hypothesis $(H-T)_{br}$ is empty, hence, by Lemma~\ref{ftriv}, $(G-T)_{br}\subset S_0$.
 Therefore 
 $$
  \gamma_*  \gamma^* T=\bigr(T\cup (G-T)_{br}\bigl) \ssm S_0= T \ssm S_0=T.
 $$
 \eqref{fupr1} is proved. 
The implication $\Leftarrow$ in \eqref{fuprb} follows trivially from  \eqref{fupr1}.
For the other implication, the hypothesis is
  $T \subset S\ssm S_0$, hence $T\subset S$. 
  Since  $\gamma^*T= T\cup(G-T)_{br}$ it is enough to prove $(G-T)_{br}\subset S$.
We have $G-S\subset G-T$, hence   every bridge of $G-T$ is either contained in $S$, or a bridge of $G-S$. As $G-S$ is bridgeless,
we   conclude $(G-T)_{br}\subset S$.

  \eqref{fupr2} follows immediately from \eqref{fuprb}.
  
\eqref{fupr3}.    Part \eqref{fupr1} implies $\Ab_*(\gamma)$ is surjective 
   and, for any $T,T'\in   \At_H$, we have 
 $T=\gamma_*  \gamma^* T$ and $T'=\gamma_*  \gamma^* T'$. By Lemma~\ref{flm}, if $T\leq T'$ then 
 $ \gamma^* T\leq \gamma^* T'$. Hence we are done.
 
\eqref{fupr4}. 
Notice that 
$\cA^{b*}(\gamma)$ is obviously injective. If $S_0$ is made of bridges of $G$ then $S_0\subset S$ for any $S\in  \AGt$,
 and $S\cap S_0=\emptyset$ for any $S\in  \AGr$. Hence $\Ab_*(\gamma)$ is injective,
 and we are done.
\end{proof}

 \subsection{Direct image of divisors and orientations}
 In this subsection we will denote by $\gamma:G\to G/S_0=H$ a contraction, with $S_0\subset E(G)$.
 To any  contraction $\gamma$  we associate    a map, easily checked to be a surjective group homomorphism,
  from $  \Div(G)$ to $\Div (H)$ mapping $\md$ to $\gamma_*\md$
defined as follows
$$
 (\gamma_* \md)_v :=\sum _{z\in \gamma_V^{-1}(v)}\md_z 
$$
for any  $v\in V(H)$. 
Let ${ \delta}:H \to  J$ be a contraction. Then
\begin{equation}\label{homo}
 ({ \delta}\circ \gamma)_*(\md)={ \delta}_* (\gamma_* ( \md)).
\end{equation}

In the sequel we shall employ the following   notation.
Let $O$ be a generalized orientation on $G$ and let $\gamma:G\to H$ be a contraction.
As $E(H)$ is identified with a subset of $E(G)$ we can restrict $O$ to $E(H)$, thus defining a generalized orientation on $H$, 
 denoted by $O_{|H}$.   

Let  $S\subset E$  and  let   $O_S$  be a    generalized orientation 
 on  $G-S$.  We have  $ 
E(H-\gamma_*S)=E(G -S\cup S_0)\subset E(G-S),
$ so we can define (abusing notation again) the following  generalized  orientation on
 $H-\gamma_*S$
\begin{equation}
  \label{defio}
  \gamma_* O_S  :=(O_S)_{|H-\gamma_*S}. 
\end{equation}

As a final piece of notation, to  $\gamma$ and $S\subset E$
we associate the divisor $\mc^{\gamma,S}$ on $H$ 
such that  for any $v\in V(H)$
\begin{equation}
 \label{mcv}
 \mc_v^{\gamma,S}:=|\{e\in S_0\cap S: \  \gamma(e)=v\}|.
\end{equation}
If $S=E(G)$ we write   $\mc^{\gamma} =\mc^{\gamma,E(G)}$. Of course, $ \mc^{\gamma,S}\geq 0$ and equality
holds if and only if $S\cap S_0=\emptyset$.

\begin{prop}
\label{fprop}
Let $G$ be a graph,  $S\subset E$, and 
  $O_S$   a $b$-orientation on  $G-S$, with $b=0,1$.
 Let   $\gamma:G\to H=G/S_0$ be a contraction 
 such that no edge of $S_0$ is bioriented. 
Then  $\gamma_*O_S$ is a $b$-orientation on $H-\gamma_*S$ and the following hold.
\begin{enumerate}[(a)]
 \item
  \label{fprop1}
If $O_S\in \cO^b(G-S)$ then
 $\gamma_*O_S \in \cO^b(H-\gamma_*S)$.
  \item
 \label{ffpropfunc}
Let ${ \delta}:H \to  J$ be a contraction of $H$. Then $({ \delta}\circ \gamma)_*O_S={ \delta}_* \gamma_* O_S $.
  \item
 \label{dlm}
 $\gamma_*\md^{O_S}= \md^{\gamma_*O_{S}}- \mc^{\gamma,S}$.
    \item
   \label{fprop3}
Let 
  $O'_{S}$ be a $b$-orientation on $G-S$.  If $O'_S\sim O_S$ then  $\gamma_*O'_S\sim   \gamma_*O_S$.
 \item
  \label{fprop4} Let 
  $O_{T}$ be a $b$-orientation on $G-T$.
If $ O_S  \leq  O_{T} $ then $ \gamma_*O_S \leq  \gamma_*O_{T} $.

       \end{enumerate}
\end{prop}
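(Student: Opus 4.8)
The plan is to prove the five assertions by reducing everything to the defining formulas and to the already-established functoriality of $\gamma_*$ on divisors (equation \eqref{homo}) and on the posets $\AGb$ (Lemma~\ref{flm}, Proposition~\ref{fupr}). First I would verify the preliminary claim that $\gamma_*O_S$ is a genuine $b$-orientation on $H-\gamma_*S$: since no edge of $S_0$ is bioriented, restricting $O_S$ to $E(H-\gamma_*S)=E(G-S\cup S_0)$ (which contains exactly the edges of $G-S$ not contracted) leaves the number of bioriented edges unchanged, so the restriction is again a $b$-orientation. This is the conceptual heart of why the hypothesis ``no edge of $S_0$ is bioriented'' is imposed, and I expect it to be the main obstacle to state cleanly, since one must track carefully that contraction does not accidentally create or destroy a bioriented edge.

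For part \eqref{dlm}, which is the computational core, I would compare the two divisors coordinate by coordinate. Fix $v\in V(H)$ and compute $(\gamma_*\md^{O_S})_v=\sum_{z\in\gamma_V^{-1}(v)}\md^{O_S}_z$ using the definition $\md^{O_S}_z=w(z)-1+\mt^{O_S}_z$. On the other side, $\md^{\gamma_*O_S}_v=w_{/S_0}(v)-1+\mt^{\gamma_*O_S}_v$, where $w_{/S_0}(v)=g(\gamma^{-1}(v))$. The key bookkeeping is that the target-count $\mt^{\gamma_*O_S}_v$ records half-edges in $H$ targeting $v$, which equals the total $\sum_z \mt^{O_S}_z$ over the contracted cluster \emph{minus} the targets coming from the now-contracted edges of $S_0\cap S$ (these are exactly counted by $\mc^{\gamma,S}_v$, cf.\ \eqref{mcv}) and minus the internal edges absorbed into the genus term. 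Matching the weight/genus correction against the internal edge count via the genus formula $g(\gamma^{-1}(v))=\sum_z w(z)-|\gamma_V^{-1}(v)|+|\text{internal edges}|+1$ should produce exactly the claimed correction $-\mc^{\gamma,S}$. I expect this to be the most delicate step, so I would carry it out by separating half-edges into those internal to the contracted cluster and those crossing out of it.

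Once \eqref{dlm} is in hand, the remaining parts follow formally. For \eqref{fprop1}, totally cyclic (resp.\ rooted) orientations are characterized by the inequalities in Lemma~\ref{lm0}\eqref{lm0d} and Lemma~\ref{lmO1}\eqref{lmO1d}; I would push these inequalities forward, using that any connected $Z'\subset V(H)$ pulls back to a connected $Z=\gamma_V^{-1}(Z')\subset V(G)$ with $g(Z')=g(Z)$ and $|\md^{\gamma_*O_S}_{Z'}|=|\gamma_*\md^{O_S}_{Z}|+(\text{nonnegative }\mc\text{-correction})\geq|\md^{O_S}_Z|>g(Z)-1=g(Z')-1$, invoking $\mc^{\gamma,S}\geq 0$. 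Part \eqref{ffpropfunc} is just the statement that restriction is compatible with composition of contractions, a direct identification of edge-sets. Part \eqref{fprop3} is immediate from \eqref{dlm}: if $\md^{O_S}=\md^{O'_S}$ then $\gamma_*\md^{O_S}=\gamma_*\md^{O'_S}$ by \eqref{homo}, and since the correction $\mc^{\gamma,S}$ depends only on $\gamma$ and $S$ (not on the orientation), we get $\md^{\gamma_*O_S}=\md^{\gamma_*O'_S}$, i.e.\ $\gamma_*O_S\sim\gamma_*O'_S$. Finally \eqref{fprop4}: the hypothesis $O_S\leq O_T$ means $S\leq T$ and $(O_T)_{|G-S}=O_S$; applying $\gamma_*$ gives $\gamma_*S\leq\gamma_*T$ by Lemma~\ref{flm}\eqref{flm1}, and restricting further shows $(\gamma_*O_T)_{|H-\gamma_*S}=\gamma_*O_S$, which is exactly $\gamma_*O_S\leq\gamma_*O_T$ in the order of Definition~\ref{podef}.
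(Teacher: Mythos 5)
Your preliminary claim and parts \eqref{ffpropfunc}, \eqref{dlm}, \eqref{fprop3}, \eqref{fprop4} are sound and essentially match the paper (for \eqref{fprop3} the paper compares $t^{O_S}(Z_v)$ directly, but your derivation from \eqref{dlm} together with \eqref{homo} is a clean equivalent). The problem is part \eqref{fprop1}. You push forward the divisor inequalities of Lemmas~\ref{lm0}\eqref{lm0d} and \ref{lmO1}\eqref{lmO1d} via the chain $|\md^{\gamma_*O_S}_{Z'}|\geq|\md^{O_S}_Z|>g(Z)-1=g(Z')-1$, but this chain does not close. The genera in those lemmas must be computed in the graphs carrying the orientations, i.e.\ $g((G-S)[Z])$ and $g((H-\gamma_*S)[Z'])$, and these are \emph{not} equal in general: an edge of $S\cap S_0$ lying inside a cluster is absent from $(G-S)[Z]$ but still contributes to $g((H-\gamma_*S)[Z'])$ through the weight $w_{/S_0}(v)$, so $g(Z')$ can strictly exceed $g(Z)$; and by weakening to ``$\geq$'' you discard the $\mc^{\gamma,S}$-correction, which is exactly what compensates that excess. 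Concretely, let $G$ have vertices $x,y,z$ of weight $0$ with a double edge between each pair, let $S=S_0=\{e_1\}$ be one edge between $x$ and $y$, and choose a totally cyclic $O_S$ on $G-e_1$ with $\md^{O_S}_x=\md^{O_S}_y=0$. For $Z'=\{\gamma(x)\}$ one finds $g((G-S)[Z])-1=-1$ but $g(H[Z'])-1=0$ (the surviving edge $e_2$ becomes a loop), so your chain would need $|\md^{O_S}_Z|=0>0$; the true inequality $|\md^{\gamma_*O_S}_{Z'}|=1>0$ holds only thanks to the $+\mc^{\gamma,S}_v=+1$ term you dropped. (If instead you meant $g(Z),g(Z')$ in the ambient graphs $G,H$, the equality holds but the middle inequality $|\md^{O_S}_Z|>g_G(Z)-1$ is no longer what Lemma~\ref{lm0}\eqref{lm0d} provides, and it fails in the same example.) There is also a secondary issue: $\gamma_V^{-1}(Z')$ need not induce a \emph{connected} subgraph of $G-S$ even when $(H-\gamma_*S)[Z']$ is connected, since edges of $S\cap S_0$ are deleted, so the lemma must be applied component by component.

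The approach is repairable --- keep the correction term and prove $g((H-\gamma_*S)[Z'])\leq g((G-S)[Z])+|\mc^{\gamma,S}_{Z'}|$, summing over components of $(G-S)[Z]$ --- but that is genuine extra bookkeeping you have not supplied. The paper avoids all of it by arguing with directed cuts instead of divisors: using Lemma~\ref{ftriv}\eqref{ftrivcut} it shows that any directed cut of $H-\gamma_*S$ is already a directed cut of $G-S$ (splitting into the cases $e_0\notin S$ and $e_0\in S$), which is both shorter and insensitive to how the genus redistributes under contraction. I would recommend adopting that route for \eqref{fprop1} and keeping your arguments for the remaining parts.
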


\begin{proof}
It is clear that $ \gamma_* O_S $ is   a  $b$-orientation on
 $H-\gamma_*S$ whose bioriented edge, in case $b=1$, is the same as that of $O_S$. 

\eqref{fprop1}. We need to show  $\gamma_* O_S $  is totally cyclic if $b=0$, and rooted if $b=1$.  
  It suffices to prove that if $F$ is a directed cut of $H -\gamma_*S$
then $F$ is a directed cut of $G-S$. We can assume $S_0=\{e_0\}$. 
If $e_0\not\in S$  then $\gamma_*S=S$. By Lemma~\ref{ftriv} \eqref{ftrivcut}, every directed cut   of $H-S$ is also a directed cut of $G-S$ and we are done. 
If $e_0 \in S$  set $T=S\ssm \{e_0\}$. We have
$$
H -\gamma_*S=H -T= (G-T)/e_0.
$$
 A directed cut, $F$,    of $H -\gamma_*S$ is thus a directed cut
of $G-T$.
Now, $G-S\subset G-T$, hence $F$ is a directed cut in $G-S$.
 \eqref{fprop1} is proved.

 \eqref{ffpropfunc} is trivial.
 
  \eqref{dlm}.  For any $v\in V(H)$ 
set $Z_v=\gamma^{-1}(v)$, which is   a connected subgraph of $G$.  
We have
 $ g(Z_v)=\sum_{z\in V(Z_v)} \bigr(w(z)-1 \bigl)+|E(Z_v)| +1$,   
hence
  $$
  (\gamma_*\md^{O_S})_v= \sum_{z\in V(Z_v)}\bigr(w(z)-1+\mt_z^{O_S}\bigl)=g(Z_v)-1-|E(Z_v)|+\sum_{z\in V(Z_v)}  \mt_z^{O_S}.
  $$
Let  $t^{O_S}(Z_v)$ be the number of edges  
with  target in $Z_v$ and not  contained in it. As every edge of $Z_v$ lies in $S_0$,
$$
|E(Z_v)|  = \sum_{z\in V(Z_v)}   \mt_z^{O_S} - t^{O_S}(Z_v)  +   \mc_v^{\gamma,S}.
 $$
Therefore
\begin{equation}
 \label{eq*}
  (\gamma_*\md^{O_S})_v =  g(Z_v) -1 + t^{O_S}(Z_v) -    \mc_v^{\gamma,S}.
\end{equation}
   On the other hand we have 
\begin{equation}
 \label{eqzz}
  (\md^{\gamma_*O_S})_v= w_{/S_0}(v)-1+\mt_v^{\gamma_*O_S} = g(Z_v)-1+  t^{O_S}(Z_v).
\end{equation}
Indeed, by definition of contraction, $w_{/S_0}(v)= g(Z_v)$ and, clearly,
 the number of $O_S$-incoming edges at $Z_v$ equals the number of   $\gamma_*O_S$-incoming edges at $v$.
Comparing \eqref{eq*} and \eqref{eqzz} yields \eqref{dlm}.

  \eqref{fprop3}.  By hypothesis,
 $\md^{O_S}=\md^{O'_{S}}$, hence  $\mt^{O_S}=\mt^{O'_{S}}$.
Hence, by \eqref{tOZ}, for any $v\in V(H)$ we have $t^{O_S}(Z_v)= t^{O'_{S}}(Z_v)$ as $Z_v$ does not contain bioriented edges.
Combining with 
   \eqref{eqzz} we get  $\md^{\gamma_*O_S}=\md^{\gamma_*O'_{S}}$, and we are done.

 \eqref{fprop4}. By   assumption we have $S\leq T$  and 
   $ (O_{T})_{|G-S}= O _{S}$.
We obviously have
$ \gamma_*S\leq \gamma_*T 
$. Next, as  $H- \gamma_*S\subset H-\gamma_*T 
$
 $$
(\gamma_*O_{T})_{|H-\gamma_*S} = (O_T)_{|H-\gamma_*S}=
 (O_T\:_{|G-S})_{|H-\gamma_*S}=O _{S}\;_{|H-\gamma_*S}= \gamma_*O _{S}. 
$$
The proof is complete
\end{proof}

\begin{example}
In Figure 3 we have  $S=S_0=\{e\}$.

 \begin{figure}[h]
\begin{equation*}
\xymatrix@=.5pc{
&&&&&&&&&&&&&&\\
&G=&*{\bullet}\ar@{<-} @/^ 1.4pc/[rrrr]^(.02){}\ar@{->}[rrrr]\ar@{.} @/_1.4pc/[rrrr]^{e}&&&&
*{\bullet} \ar@{->} @/^ 1.6pc/[rrrr]^(1.1){}\ar@{<-} [rrrr]^(1){}\ar@{->} @/_1.5pc/[rrrr]&&&&*{\bullet}   
 &\ar@{->}[rrr]
&&  &&H=&&*{\bullet}\ar@{->}@(ul,ur)\ar@{->}@(ul,dl)
 \ar@{->} @/^ 1.2pc/[rrrr]\ar@{<-} @/_0.1pc/[rrrr]\ar@{->} @/_1.5pc/[rrrr]_(0,1){v_e}&&&&*{\bullet}    
  &&& &&&&&&& 
  \\
  \\
&&&&& O_S &&&&&&&&&&&&&&\gamma_*O_S&&&&&&}
\end{equation*}
\caption{Case $S=S_0$}
\end{figure}
Assume all vertices of $G$ have weight $1$, so that $v_e$ has weight $2$ in $H$.
We have, ordering the vertices from left to  right,
$ 
\mt^{O_S}=\md^{O_S} = (1,2,2)$, \    $\mt^{\gamma_*O_S}=(3, 2),$ \  $   \md^{\gamma_*O_S}=(4, 2)$, and 
$ 
 \gamma_*\md^{O_S} = (3,2).  
$ 
Hence $\md^{\gamma_*O_S}>\gamma_*\md^{O_S}$. 
 \end{example}

From the previous result  we derive a few   facts.
\begin{prop}
\label{fbar} 
Fix   $\gamma:G\to H=G/S_0$   and let $b=0,1$.
\begin{enumerate}[(a)]
 \item
Let $b=0$. Then 
we have   a   morphism  of posets
$$ 
 {\overline {\gamma}}_*:\OGt\la  \OHt; \quad \oO_S\mapsto \overline {\gamma_*O_S}.
$$
 \item
Let $b=1$  and $S_0\neq E(G)$.
Then we have   a   morphism  of posets
$$ 
 {\overline {\gamma}}_*:\OGb\la  \OHb; \quad \oO_S\mapsto \overline {\gamma_*O'_{S}}
 $$ 
 for any $O'_{S}\sim O_S$ whose bioriented edge is not in $S_0$.
 \item
Let $b=0,1$ and let
  ${ \delta}:H \to  H/T_0$ be  a contraction;    if $b=1$ assume $T_0\neq E(H)$.
Then $\overline{({\delta}\circ \gamma)}_*={\overline {\delta}}_*\circ {\overline {\gamma}}_* $.

\end{enumerate}
 \end{prop}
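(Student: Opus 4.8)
The whole proposition is built on Proposition~\ref{fprop}, which already governs the direct image $\gamma_*O_S$ of a single orientation; what remains is to descend to equivalence classes and to verify monotonicity and functoriality. Part (a) is immediate, because a totally cyclic orientation is a $0$-orientation and hence has no bioriented edge, so the hypothesis of Proposition~\ref{fprop} is vacuously satisfied. Thus $\gamma_*O_S\in\cO^0(H-\gamma_*S)$ by Proposition~\ref{fprop}\eqref{fprop1}, the assignment $\oO_S\mapsto\overline{\gamma_*O_S}$ is well defined on classes by Proposition~\ref{fprop}\eqref{fprop3}, and monotonicity follows by taking, via Proposition~\ref{poo}\eqref{exist}, representatives with $O_S=(O_T)_{|G-S}$ and applying Proposition~\ref{fprop}\eqref{fprop4} together with the corresponding characterization (Proposition~\ref{poo}\eqref{exist}) of the order on $\OHt$.

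The content of the statement is concentrated in part (b), where the bioriented edge of a rooted $1$-orientation may lie in $S_0$ and spoil the hypothesis of Proposition~\ref{fprop}. The first step is to produce a good representative. Since $E(H-\gamma_*S)=E(G)\ssm(S\cup S_0)$, whenever this set is non-empty I pick an edge $e'$ in it and invoke Lemma~\ref{lmfree}\eqref{lmfreec} to replace $O_S$ by an equivalent $O'_S$ with bioriented edge $e'\notin S_0$; Proposition~\ref{fprop} then applies and $\gamma_*O'_S$ is rooted. Any two such representatives are equivalent and both satisfy the hypothesis, so Proposition~\ref{fprop}\eqref{fprop3} shows the resulting class is independent of the choice, giving well-definedness. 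In the degenerate case $E(G)\ssm(S\cup S_0)=\emptyset$ the graph $H-\gamma_*S$ is connected (Lemma~\ref{forward}) and edge-free, hence a single vertex, on which Convention~\ref{empty} provides a unique rooted orientation; the restriction of any $1$-orientation to an edge-free vertex is this empty orientation, so the image is unambiguous.

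The crux is monotonicity in part (b). Given $\oO_S\leq\oO_T$, I choose representatives with $O_S=(O_T)_{|G-S}$; since $O_S$ is rooted, its bioriented edge $e$ lies in $G-S$ and is also the bioriented edge of $O_T$. If $e\notin S_0$ we conclude as in part (a) by Proposition~\ref{fprop}\eqref{fprop4}. The difficult point is when $e\in S_0$: here I would move the bioriented edge \emph{simultaneously} in both orientations. Fixing $e'\in E(G)\ssm(S\cup S_0)$, take an $O_S$-directed path $P$ from $e$ to $e'$ inside $G-S$ (Lemma~\ref{lmfree}\eqref{lmfreeb}); as $P\subset G-S\subset G-T$ and $O_T$ restricts to $O_S$ on $G-S$, the path $P$ is $O_T$-directed as well, and reversing it (Remark~\ref{revpath}) in both $O_T$ and $O_S$ produces equivalent rooted orientations $O'_T,O'_S$ with bioriented edge $e'\notin S_0$ and still $O'_S=(O'_T)_{|G-S}$. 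Now Proposition~\ref{fprop}\eqref{fprop4} gives $\gamma_*O'_S\leq\gamma_*O'_T$, i.e. $\overline{\gamma_*O_S}\leq\overline{\gamma_*O_T}$. This simultaneous path-reversal, escaping $S_0$ while keeping the two orientations comparable, is the main obstacle; the degenerate case is trivial, since the common image is the empty orientation on a single vertex.

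Part (c) is then formal. Writing $J=H/T_0=G/(S_0\cup\widetilde{T}_0)$ with $\widetilde{T}_0\subset E(G)$ the preimage of $T_0$, the assumption $T_0\neq E(H)$ gives $E(J)=E(G)\ssm(S_0\cup\widetilde{T}_0)\neq\emptyset$, so by the argument of part (b) I can choose a single representative $O'_S\sim O_S$ whose bioriented edge avoids $S_0\cup\widetilde{T}_0$. Then $\gamma_*O'_S$ has bioriented edge outside both $S_0$ and $T_0$, so this one representative computes $\overline{\gamma}_*$, $\overline{\delta}_*$ and $\overline{(\delta\circ\gamma)}_*$, and the identity reduces to Proposition~\ref{fprop}\eqref{ffpropfunc}, namely $(\delta\circ\gamma)_*O'_S=\delta_*\gamma_*O'_S$. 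For $b=0$ no bioriented edge intervenes and the same computation applies directly.
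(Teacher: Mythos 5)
Your proof is correct and follows the same strategy as the paper's: reduce everything to Proposition~\ref{fprop}, using Lemma~\ref{lmfree} to replace a rooted $1$-orientation by an equivalent one whose bioriented edge avoids $S_0$. You in fact spell out more than the paper does --- notably the monotonicity step, which the paper dispatches with ``the rest follows from \ref{fprop}'' (implicitly by lifting a good representative via Proposition~\ref{poo}\eqref{forall}, where your simultaneous path-reversal is an equally valid variant), and the degenerate case in which every edge of $G-S$ lies in $S_0$, which the paper does not address explicitly.
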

 
 \begin{proof}
 If $b=0$ the statement is a trivial consequence of \ref{fprop}.
 
For $b=1$, pick any   $\oO_S\in \OGt$.  By Lemma~\ref{lmfree}, there exists $O'_{S}\sim O_S$ whose bioriented edge
does not lie in $S_0$. Then  \ref{fprop} yields that $\gamma_*O'_{S}$ is a well-defined element in
$\OPt_H$,   and different choices of $O'_{S}$ yield equivalent elements in $\OPt_H$. Hence $ {\overline {\gamma}}_*\oO_S$ is a well defined
element of $\OHt$.
The rest of the proof follows   from \ref{fprop}.
  \end{proof}

 \begin{cor}
\label{fdiag}
 Let $\gamma:G\to H=G/S_0$ be a contraction. Then  we have   a commutative diagram of posets
$$\xymatrix{
\OPt_G\ar[rr]^{\gamma_*}\ar[d]_{} &&
\OPt_H\ar[d] \\
\OGt\ar[rr]^{\ov{\gamma}_*} && \OHt,} $$ 
where  
the vertical arrows are the quotient maps.
If $S_0\subset G_{br}$ then the horizontal arrows  are bijections.\end{cor}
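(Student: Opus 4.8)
The plan is to separate the two assertions and to rely on the functorial package assembled in Propositions~\ref{fprop}, \ref{fbar} and \ref{fupr}. Throughout $b=0$, so no bioriented edges occur and all the subtleties of Lemma~\ref{lmfree} are absent.

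Commutativity is a matter of unwinding definitions. The vertical maps are the quotients $O_S\mapsto\oO_S$ and $O_T\mapsto\oO_T$, while $\ov{\gamma}_*$ was \emph{defined} in Proposition~\ref{fbar} by $\oO_S\mapsto\ov{\gamma_*O_S}$, which is legitimate precisely by Proposition~\ref{fprop}\eqref{fprop3}. For $O_S\in\OPt_G$ the route right-then-down yields $\ov{\gamma_*O_S}$ and the route down-then-right yields $\ov{\gamma}_*\,\oO_S=\ov{\gamma_*O_S}$; these coincide, so the square commutes.

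Assume now $S_0\subset G_{br}$. First I would dispose of the indexing posets: by Proposition~\ref{fupr}\eqref{fupr4}, $\gamma_*\colon\AGt\to\At_H$ is an isomorphism with inverse $\gamma^*$. For $T\in\At_H$ one has $(G-T)_{br}=S_0$, since an unremoved bridge of $G$ stays a bridge (giving $S_0\subset(G-T)_{br}$) while $(G-T)_{br}\subset S_0$ by Lemma~\ref{ftriv}\eqref{ftriv4}; hence $\gamma^*T=T\cup S_0$ and $E(G-\gamma^*T)=E(G)\ssm(T\cup S_0)=E(H-T)$. Thus over matched indices $\gamma_*$ is merely the relabeling that keeps the same directions on the same edges. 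To promote this to a bijection $\gamma_*\colon\OPt_G\to\OPt_H$ I would construct its inverse, sending an orientation on $H-T$ to the identical edge-data on $G-\gamma^*T$; the only point to verify is that this lands in $\cO^0$, i.e. that total cyclicity is preserved. Since $G-\gamma^*T=(G-T)-S_0$ and $H-T=(G-T)/S_0$ with $S_0=(G-T)_{br}$, this is exactly the claim that an orientation of the non-bridge edges of a fixed graph is totally cyclic after deleting all its bridges if and only if it is so after contracting them. One direction is Proposition~\ref{fprop}\eqref{fprop1}; for the converse I would reduce, via the functoriality of Proposition~\ref{fbar} and induction on $|S_0|$, to a single bridge $e$, and then check the equivalence with Lemma~\ref{lm0}, distinguishing whether a connected test set $Z$ contains the contracted vertex, just as in the proofs of Lemmas~\ref{lm0} and \ref{lmO1}. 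This \emph{converse preservation of total cyclicity} is the step I expect to be the main obstacle.

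Finally I would descend the bijection of $\OPt$ to a bijection $\ov{\gamma}_*\colon\OGt\to\OHt$; by the commutative square this amounts to matching the two relations $\sim$. Surjectivity is inherited from $\gamma_*$, and the forward matching $O_S\sim O'_S\Rightarrow\gamma_*O_S\sim\gamma_*O'_S$ is Proposition~\ref{fprop}\eqref{fprop3}. For injectivity, fix $S=\gamma^*T$ and suppose $\gamma_*O_S\sim\gamma_*O'_S$; by Proposition~\ref{fprop}\eqref{dlm} and the orientation-independence of $\mc^{\gamma,S}$ this gives $\gamma_*\md^{O_S}=\gamma_*\md^{O'_S}$, and I must upgrade equality of the fiber-sums to $\md^{O_S}=\md^{O'_S}$. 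Again inducting on $|S_0|$ and treating a single bridge $e=\{x,y\}$: its ends lie in distinct connected components $C_x\ne C_y$ of $G-S$ (a second path joining them would close a cycle through $e$), the hypothesis forces $\md^{O_S}_z=\md^{O'_S}_z$ for all $z\neq x,y$, and the component degree identity $|\md^{O_S}_{C_x}|=g(C_x)-1=|\md^{O'_S}_{C_x}|$ coming from \eqref{degto} then pins down $\md^{O_S}_x=\md^{O'_S}_x$, and symmetrically at $y$. Hence $O_S\sim O'_S$, so $\ov{\gamma}_*$ is injective and both horizontal arrows are bijections.
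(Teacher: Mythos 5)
Your proposal is correct and follows the same route as the paper: the key observation in both is that $S_0\subset G_{br}$ forces $(G-T)_{br}=S_0$ for every $T\in \At_H$, so that $G-\gamma^*T$ and $H-T$ have identical edge sets and $\gamma_*$ is just a relabeling, with surjectivity obtained by transporting an orientation of $H-T$ back to $G-\gamma^*T$. The two points you single out as the real work --- the converse preservation of total cyclicity under contraction of the bridges, and the descent of the bijection to $\sim$-classes via the component identity $|\md^{O_S}_{C_x}|=g(C_x)-1$ at the two ends of a bridge --- are precisely the steps the paper compresses into ``easily seen'' and ``it is enough to prove that $\gamma_*$ is a bijection'', and your arguments for both are sound.
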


\begin{remark}
If $S_0\subset G_{br}$ the lower arrow,  $\ov{\gamma}_*$, is a  bijection also for $b=1$. The proof uses a different language so we omit it as we will not need it.
\end{remark}
  \begin{proof}
 The commutativity of the diagram follows from Propositions~\ref{fprop} and \ref{fbar}.
 For the remaining  part  it is enough to prove that  $\gamma_*$   is a bijection.
 
We have $S_0\subset S$ for all $S\in  \AGt$ and we already know we have  a bijection $ \AGt\to   \At_H$
 mapping $S$ to $\gamma_*S$.
Now $G-S$ and $H-\gamma_*S$ have exactly the same edges, hence we have an injection
 $\gamma_*:\cO^0(G-S)\ha \cO^0(H-\gamma_*S)$. We proved that $\gamma_*$ is injective.
Now pick $O_T\in \cO(H-T)$. Let $S=\gamma^*T=T\cup(G-T)_{br}$ so that $\gamma_*S=T$. We have 
$(G-T)_{br}\subset S_0$ hence
$$
E(G-\gamma^*T)=E(G)\ssm \bigr(T\cup(G-T)_{br}\bigl)\subset E(G)\ssm (T\cup S_0)=E(H-T).
$$
Therefore we can restrict $O_T$ to  $G-\gamma^*T$,  obtaining an orientation    easily seen to be totally cyclic and to map to $O_T$ via $\gamma_*$.
Hence $\gamma_*$ is surjective.
 \end{proof}

\begin{cor}
\label{bricor}
The  inclusion $\iota:G-G _{br}\ha G$ and   the contraction  $\gamma: G\to G/G _{br}$
induce natural  isomorphisms (viewed as identifications) 
 $$
\OPt_{G-G _{br}} \stackrel{\iota_*}{=}\OPt_G\stackrel{\gamma_*}{=}{\OPt_{G/G _{br}}  }  
$$
and
$${\overline{\mathcal{OP}}^0}_{G-G _{br}} \stackrel{\ov{\iota}_*}{=}{\overline{\mathcal{OP}}^0_G}\stackrel{\ov{\gamma}_*}{=}{\overline{\mathcal{OP}}}^0_{G/G _{br}}. 
$$

\end{cor}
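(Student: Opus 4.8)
The plan is to read this statement as an assembly of two facts already at our disposal---one for the inclusion $\iota$ and one for the contraction $\gamma$---and then to check that the bijections we obtain are genuinely isomorphisms of posets. The identifications on the left, induced by $\iota\colon G-G_{br}\hookrightarrow G$, require no new work: by Remark~\ref{nobri} we have $\AGt=\cA^0_{G-G_{br}}$, so the index sets for $\OPt_G$ and $\OPt_{G-G_{br}}$ coincide, and Remark~\ref{nobrio} already records the resulting equalities $\OPt_G=\OPt_{G-G_{br}}$ and $\OGt=\overline{\mathcal{OP}}^0_{G-G_{br}}$. Thus the substance of the proof lies in the $\gamma$-part.

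For the contraction $\gamma\colon G\to G/G_{br}$ I would apply Corollary~\ref{fdiag} with $S_0=G_{br}$. Since $S_0=G_{br}\subset G_{br}$, its hypothesis is trivially met, and it yields directly that the horizontal arrows $\gamma_*\colon \OPt_G\to \OPt_{G/G_{br}}$ and $\ov{\gamma}_*\colon \OGt\to \overline{\mathcal{OP}}^0_{G/G_{br}}$ are bijections. It then remains to upgrade ``bijection'' to ``isomorphism of posets''. Both maps are morphisms of posets by Proposition~\ref{fprop}\eqref{fprop4} and Proposition~\ref{fbar}, so I only need their inverses to be order-preserving, and I would argue this fiberwise. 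On the index posets, $\Ab_*(\gamma)\colon \AGt\to \cA^0_{G/G_{br}}$ is a poset isomorphism with inverse $\gamma^*$ by Proposition~\ref{fupr}\eqref{fupr4}; and for each $S\in\AGt$ we have $G_{br}\subset S$ (Remark~\ref{nobri}), so the graphs $G-S$ and $(G/G_{br})-\gamma_*S$ carry the same edge set $E\ssm S$, whence $\gamma_*$ identifies $\cO^0(G-S)$ with $\cO^0((G/G_{br})-\gamma_*S)$ as sets of orientations. Since the partial order on $\OPt$ (Definition~\ref{podef}) is detected by the order on the index together with restriction of orientations, and both of these are transported isomorphically, the inverse of $\gamma_*$ is order-preserving.

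The passage to equivalence classes is then automatic: $\gamma_*$ carries $\sim$ to $\sim$ by Proposition~\ref{fprop}\eqref{fprop3}, so it descends to the bijection $\ov{\gamma}_*$ of Proposition~\ref{fbar}, and the identical fiberwise argument shows this descended map is an isomorphism of posets; naturality is inherited from the functoriality established in Proposition~\ref{fbar}. The only genuinely non-formal point---and hence the main obstacle---is the verification that the inverse of $\gamma_*$ respects the order, but this reduces to the elementary observation that deleting and contracting the bridges of $G$ produce graphs with identical edge sets and identical totally cyclic orientations, the bridges playing no role in either structure. Everything else is a direct citation of Remarks~\ref{nobri} and \ref{nobrio} and of Corollary~\ref{fdiag}.
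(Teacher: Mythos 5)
Your proposal is correct and follows essentially the same route as the paper, whose entire proof is ``Combine Remark~\ref{nobrio} with \ref{fdiag}.'' The extra care you take in checking that the inverse of $\gamma_*$ is order-preserving (so that the bijections of Corollary~\ref{fdiag} are genuinely poset isomorphisms) is a sound and welcome elaboration of a point the paper leaves implicit, but it does not change the argument.
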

 
\begin{proof}
Combine Remark~\ref{nobrio} with Corollary~\ref{fdiag}.
\end{proof}
 \subsection{Quotients of orientation spaces}
We shall now  give a more precise description of     the map  $  
{\overline {\gamma}}_*:\oOPGb\to  \oOPHb 
$   introduced in Proposition~\ref{fbar}.    
\begin{thm}
\label{fthm}
 Let $\gamma:G\to H=G/S_0$ be a contraction with $S_0\subsetneq E(G)$; let $b=0,1$.
Then   $  
{\overline {\gamma}}_*:\oOPGb\to \oOPHb 
$ 
  is a quotient of posets mapping
  $\ocO^b(G-\gamma^*T)$
onto
 $ 
  \ocO^b(H-T) 
 $ for every $T\subset E(H)$.
\end{thm}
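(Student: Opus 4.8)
The plan is to verify, for $\ov\gamma_*$, the three defining features of a quotient of posets — that it is a morphism (already granted by Proposition~\ref{fbar}), that it is surjective, and that it lifts the order — and to prove the sharper fibre assertion along the way. I would first record the cheap half. The forgetful maps $\oOPGb\to \AGb$ and $\oOPHb\to \Ab_H$ are quotients of posets (Proposition~\ref{poo}), the square they form with $\ov\gamma_*$ and $\Ab_*(\gamma)$ commutes by the definition of $\ov\gamma_*$, and $\Ab_*(\gamma)$ is itself a quotient (Proposition~\ref{fupr}). Since $\gamma_*\gamma^*T=T$ by Proposition~\ref{fupr}\eqref{fupr1}, the piece $\ocO^b(G-\gamma^*T)$ lands inside the fibre $\ocO^b(H-T)$. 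Everything then reduces to two lifting statements: (i) every class of $\ocO^b(H-T)$ is the image of a class of $\ocO^b(G-\gamma^*T)$ (fibrewise surjectivity, which also yields surjectivity of $\ov\gamma_*$), and (ii) given $\ov O_1\leq \ov O_2$ in $\oOPHb$, there are preimages $\ov P_1\leq \ov P_2$ in $\oOPGb$.

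For (i) I would lift a representative $O_T\in\cO^b(H-T)$ across $\gamma$. Since $\gamma_*P=(P)|_{H-T}$ merely forgets the orientation of the contracted edges, a lift $P$ is just a $b$-orientation on $G-\gamma^*T$ extending $O_T$ over the surviving edges of $S_0$; note that for $b=0$ the bridge term in $\gamma^*T=T\cup(G-T)_{br}$ is exactly what makes $G-\gamma^*T$ bridgeless, so that totally cyclic orientations can exist there. I orient each contracted fibre $\gamma^{-1}(v)$ by a totally cyclic orientation, placing the root edge outside $S_0$ when $b=1$, following the Convention~\ref{convo} device used in Lemma~\ref{quoto}. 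That the assembled $P$ is again totally cyclic, respectively rooted, is checked with the cut criteria of Lemma~\ref{lm0} and Lemma~\ref{lmO1}: a directed cut of $G-\gamma^*T$ meeting only non-contracted edges would, by Lemma~\ref{ftriv}\eqref{ftrivcut}, descend to a forbidden directed cut of $H-T$, while a directed cut internal to a fibre contradicts the total cyclicity chosen on that fibre. Passing to equivalence classes proves (i).

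For (ii) I would set $T_1,T_2$ to be the underlying edge-sets of $\ov O_1,\ov O_2$ and $S_i=\gamma^*T_i$, so that $S_2\subset S_1$ because $\gamma^*$ preserves the order (Lemma~\ref{flm}). Using (i) I lift $\ov O_1$ to a class $\ov P_1$ over $S_1$, fix a representative $P_1$, and put $O_1:=\gamma_*P_1$. By the ``for all'' form of the order on $\oOPHb$ (Proposition~\ref{poo}\eqref{forall}) I may choose a representative $O_2$ of $\ov O_2$ with $(O_2)|_{H-T_1}=O_1$; for $b=1$ I first use Lemma~\ref{lmfree} to move the bioriented edge of $P_1$ off $S_0$, so that a single root edge $e_0\in E(H-T_1)$ serves as the bioriented edge of $P_1$, $O_1$, $O_2$, and the lift. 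I then extend $P_1$ to $G-S_2$ by orienting the edges $S_1\ssm S_2$ as prescribed by $O_2$. The crucial observation is that this is exactly the extension produced by Lemma~\ref{quoto} with the auxiliary totally cyclic orientation on the contracted graph $(G-S_2)(S_1\ssm S_2)$ taken to be the one induced by $O_2$: the relation $O_2\geq O_1$, together with Lemma~\ref{ftriv}\eqref{ftrivcut} and the connectivity/bridgelessness encoded in $S_2\in\AGb$, forces that induced orientation to be totally cyclic on the contracted graph, so Lemma~\ref{quoto} certifies that the extension $P_2$ is totally cyclic, respectively rooted, with $P_2\geq P_1$. Since $\gamma_*P_2=O_2$ by construction, we get $\ov\gamma_*(\ov P_2)=\ov O_2$ and $\ov P_1\leq\ov P_2$, which is (ii).

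I expect the real difficulty to sit in the claim that the hand-built extensions of (i) and (ii) remain totally cyclic or rooted. The device that keeps this under control is to never orient the new edges arbitrarily but to realize each extension as an instance of Lemma~\ref{quoto}, transporting the total cyclicity of an auxiliary orientation on a suitable contracted graph; the cut-comparison Lemma~\ref{ftriv}\eqref{ftrivcut} is what lets me shuttle directed cuts between $G$, $H$, and the contracted fibres, and the hypothesis $S_0\subsetneq E(G)$ is what keeps $\ov\gamma_*$ defined in the first place. The secondary bookkeeping, to be disposed of at the start of each $b=1$ lift, is the placement of the unique bioriented edge, which Lemma~\ref{lmfree} allows me to pin to the edge where the orientations being compared must already agree.
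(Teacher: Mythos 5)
Your overall skeleton (fibrewise surjectivity plus order-lifting, with the reduction to the bridgeless case via the $(G-T)_{br}$ term in $\gamma^*T$) matches the paper's, but the central construction in your step (i) has a genuine gap. You propose to orient the surviving edges of $S_0$ by choosing a totally cyclic orientation on each contracted fibre $\gamma^{-1}(v)$ (or, in the Lemma~\ref{quoto} incarnation, on the auxiliary contracted graph). This fails already in the simplest case: take $G$ a $3$-cycle with edges $e_1,e_2,e_3$, $S_0=\{e_1\}$, $T=\emptyset$, so $\gamma^*T=\emptyset$. The fibre $\gamma^{-1}(v_{e_1})$ is a single non-loop edge, which has a bridge and admits no totally cyclic orientation; in the auxiliary graph $e_1$ becomes a loop, so Convention~\ref{convo} orients it \emph{arbitrarily} --- and one of the two choices produces a source/sink vertex, hence a directed cut. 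The verification you offer rests on a false dichotomy: a directed cut of $G-\gamma^*T$ need not be either disjoint from $S_0$ or internal to a fibre. In the example, with $e_1$ oriented the wrong way, the directed cut is $\{e_1,e_3\}$, which mixes contracted and non-contracted edges; it neither descends to a cut of $H-T$ nor lives in a fibre. The structural reason Lemma~\ref{quoto} cannot be recycled here is that it extends an orientation that is totally cyclic on a \emph{spanning subgraph} $G-S$, whereas the pullback of $O_T$ to the edge set $E(H-T)$ inside $G$ is an orientation of $G-(S_0\cup\gamma^*T)$, which is typically disconnected and full of directed cuts; so the ``Case 1'' contradiction of Lemma~\ref{quoto} is simply unavailable.

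The paper's Step~1 is necessarily global at exactly this point: it reduces to $S_0=\{e\}$ and $G$ bridgeless, uses Fact~\ref{F1}\eqref{F12} to find a cyclically oriented cycle $C\subset H$ through $v_e$, lifts $E(C)$ to a directed path $P$ in $G$ joining the two ends of $e$, and orients $e$ so that $P+e$ is a cyclically oriented cycle; any directed cut must then contain $e$ and hence meet this cycle, a contradiction. For $b=1$ the analogue uses an $\tilde O$-directed path from the bioriented edge to $v_e$ and Lemma~\ref{lmfree}\eqref{lmfreeb}. Your step (ii) inherits the problem, both because it invokes (i) to produce $\ov P_1$ and because the edges of $S_1\ssm S_2$ lying in $(G-T_1)_{br}\subset S_0$ are not edges of $H$, so $O_2$ does not ``prescribe'' their orientation and the induced orientation on $(G-S_2)(S_1\ssm S_2)$ is not determined by $O_2$ on precisely the problematic edges. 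Once (i) is repaired by the cycle/path-lifting argument, the order-lifting step can be done essentially as you outline (and as in the paper's Step~2), by choosing the extension to agree with the already-constructed lift of the smaller orientation.
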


We begin with the case $b=0$, for which we have the following.

\begin{prop}
\label{fsu}
 A contraction $\gamma:G\to H$ induces the  quotient of posets  
  $\gamma_*: \OPt_G\to \OPt_H$  mapping
  $\cO^0(G-\gamma^*T)$
onto
 $ 
  \cO^0(H-T) 
 $ for every $T\in    \At_H$.
\end{prop}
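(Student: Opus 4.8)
The plan is to check, for the restriction map $\gamma_*\colon\OPt_G\to\OPt_H$, the three features in the definition of a quotient of posets (Subsection~\ref{posec}): that $\gamma_*$ is a well-defined morphism of posets, that it carries $\cO^0(G-\gamma^*T)$ onto $\cO^0(H-T)$ for each $T\in\At_H$, and that it lifts comparable pairs. The first feature is essentially free from the previous results: for $O_S\in\cO^0(G-S)$ we have $\gamma_*S=S\ssm S_0\in\At_H$ and $\gamma_*O_S\in\cO^0(H-\gamma_*S)$ by Proposition~\ref{fprop}\eqref{fprop1}, while Proposition~\ref{fprop}\eqref{fprop4} gives that $\gamma_*$ is order-preserving. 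Surjectivity of the whole map will then reduce to the fibrewise statement, since $\gamma_*\colon\AGt\to\At_H$ is already onto by Proposition~\ref{fupr}\eqref{fupr3} and $\gamma_*(\gamma^*T)=T$ by Proposition~\ref{fupr}\eqref{fupr1}. So the real content is the fibrewise surjectivity and the lifting condition.

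For the surjectivity I fix $T\in\At_H$ and set $G':=G-\gamma^*T$, which is bridgeless because $\gamma^*T\in\AGt$. Then $\gamma$ restricts to a contraction $G'\to H-T$ collapsing the edge set $S_0':=S_0\ssm(G-T)_{br}$, which consists of non-bridges, and on orientations $\gamma_*$ is simply restriction to $E(H-T)\subset E(G')$. Given a totally cyclic $O_T$ on $H-T$ I must orient the collapsed edges $S_0'$ so that the combined orientation on $G'$ is again totally cyclic. The naive attempts fail here: the restriction of a totally cyclic orientation to a spanning subgraph need not be totally cyclic, and extending $O_T$ by an arbitrary orientation of $S_0'$ can create a directed cut (already visible on a two-cycle one of whose edges is pinned by $O_T$). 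Instead I would induct on $|S_0'|$, contracting one non-bridge edge $e_0$ at a time; bridgelessness is preserved by Lemma~\ref{ftriv}\eqref{ftriv4}, so at every stage the ambient graph stays bridgeless and $e_0$ stays a non-bridge. For the single-edge step I claim one of the two orientations of $e_0$ extends the given totally cyclic orientation $O'$ on $G'/e_0$ to a totally cyclic orientation of $G'$. If both failed, each would produce a directed cut $E(Z_i,Z_i^c)$ through $e_0$ (cuts avoiding $e_0$ descend to cuts of $G'/e_0$ by Lemma~\ref{ftriv}\eqref{ftrivcut} and would already contradict total cyclicity of $O'$); writing $d^-$ for the in-degree function of the fixed orientation $O'$ on $E(G'/e_0)$, the two cuts give $d^-(Z_1)=0$ and $d^-(Z_2^c)=0$, and submodularity of $d^-$ applied to $Z_1$ and $Z_2^c$ then yields a directed cut of $G'/e_0$ disjoint from $e_0$, the desired contradiction. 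The degenerate case $Z_1=Z_2$ is excluded since it would force $e_0$ to be a bridge.

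For the lifting condition, suppose $O_{T_1}\leq O_{T_2}$ in $\OPt_H$, so that $T_2\subset T_1$ and $(O_{T_2})_{|H-T_1}=O_{T_1}$. Using the surjectivity just established I first lift $O_{T_1}$ to some $O_{S_1}\in\cO^0(G-\gamma^*T_1)$ with $\gamma_*O_{S_1}=O_{T_1}$. Since $T_2\subset T_1$ gives $\gamma^*T_2\subset\gamma^*T_1$ (Lemma~\ref{flm}), it remains to extend $O_{S_1}$ across $\gamma^*T_1\ssm\gamma^*T_2$ to a totally cyclic orientation $O_{S_2}$ on the larger bridgeless graph $G-\gamma^*T_2$ whose restriction to $E(H-T_2)$ is $O_{T_2}$; then $O_{S_1}\leq O_{S_2}$ by Definition~\ref{podef} and $\gamma_*O_{S_2}=O_{T_2}$ by construction, as required. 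On the edges $T_1\ssm T_2\subset E(H-T_2)$ the extension is forced to agree with $O_{T_2}$, while the remaining new edges lie in $S_0$ and are free. A directed-cut argument as in Lemma~\ref{quoto} reduces total cyclicity of $O_{S_2}$ to total cyclicity on the contraction $(G-\gamma^*T_2)/E(G-\gamma^*T_1)$, which is bridgeless; there the forced part is already totally cyclic because it is the contraction of $O_{T_2}$ (Proposition~\ref{fprop}\eqref{fprop1}), and the free $S_0$-edges can be oriented by one further application of the lifting statement.

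I expect the main obstacle to be precisely the surjectivity, that is, lifting a totally cyclic orientation through a contraction. In contrast to the un-deletion handled by Lemma~\ref{quoto}, neither restriction nor arbitrary extension preserves total cyclicity, so the orientation of the collapsed edges has to be chosen in coordination with $O_T$; the reduction to a single non-bridge edge together with the crossing/submodular cut argument (phrased via the characterization in Lemma~\ref{lm0}\eqref{lm0c}) is what forces this through, and the very same lifting is then reused inside the verification of the quotient property.
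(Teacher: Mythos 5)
Your proposal is correct in structure and reaches the same three-part skeleton as the paper's proof (fibrewise surjectivity, lifting of comparable pairs, reduction of the general fibre to the bridgeless single-edge case), but the engine driving the surjectivity step is genuinely different. The paper's Step 1 is constructive: it uses Fact~\ref{F1}~\eqref{F12} to pick a cyclically oriented cycle $C$ through $v_e$ in $H$, lifts $E(C)$ to an $O_e$-directed path $P$ joining the ends of $e$ in $G$, orients $e$ so that $P+e$ is cyclically oriented, and kills any putative directed cut by intersecting it with $E(P+e)$. You instead prove the sharper statement that \emph{one of the two} orientations of the contracted non-bridge $e_0$ must work, via uncrossing: two failing directed cuts give sets $A\ni x$, $B\ni y$ with vanishing in-degree, submodularity of the in-degree function forces $d^-(A\cap B)=d^-(A\cup B)=0$, and whichever of these is a proper non-empty part yields a directed cut avoiding $e_0$ that descends to $G'/e_0$, the degenerate case $B=A^c$ making $e_0$ a bridge. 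This buys robustness (no need to locate a cycle that enters and leaves $v_e$ on lifts adjacent to different ends of $e$, a point the paper's ``easy to check'' quietly relies on) at the cost of being non-constructive. One imprecision to repair: when $G-T$ has bridges there is \emph{no} contraction $G'=G-\gamma^*T\to H-T$, since $G'$ is disconnected while $H-T$ need not be (contractions preserve $c$, Remark~\ref{ftrivrk}); this is exactly the wrinkle the paper's Step~3 resolves by factoring through the bridgeless graph $(G-T)/(G-T)_{br}$ and invoking the identifications of Corollary~\ref{bricor}, and your induction on $|S_0'|$ should be run on that quotient rather than on $G'$ itself --- after which your argument goes through unchanged. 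Your treatment of the lifting condition mirrors the paper's Step~2 (lift the orientation on the smaller subgraph first, then extend compatibly), though the final total-cyclicity check for $O_{S_2}$ is only sketched; the paper discharges it by observing that any directed cut of $G-\gamma^*T_2$ must contain the contracted edge and hence induces a directed cut of the smaller $G-\gamma^*T_1$.
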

\begin{proof}
We proceed in  three steps. Steps 1 and 2 prove that $\gamma_*$ is a quotient, Steps 1 and 3 prove that it is onto as stated.

 \
 
\noindent {\bf Step 1}. {\it Suppose  $G_{br}=\emptyset$,
 then the restriction of  $\gamma_*$ to $ \cO^0(G)$
 gives a surjection
 $ 
 \cO^0(G)\twoheadrightarrow \cO^0(H).
 $ }

 We can assume $S_0=\{e\}$.
 As $G_{br}=\emptyset$ we have $H_{br}=\emptyset$. Fix $\tilde{O}\in \cO^0(H)$.
 If $e$ is a loop or if $H$ has only one vertex the statement is trivial, so we exclude this
 and let   $x,y\in V(G)$ be the ends of $e$.  
Now, using convention~\ref{convo},
we have an orientation $\tilde{O}^*$  on  $G-e$ induced by $\tilde{O}$.
We shall denote $O_{e}=\tilde{O}^*$ 
and  prove that we can extend $O_{e}$ to $e$ by a totally cyclic orientation on $G$,
written $O$. Obviously,  we will have $\gamma_*O=\tilde{O}$.
  
We denote $v_e=\gamma(e)$.
Since $\tilde{O}$ is totally cyclic  
we can fix a cyclically oriented cycle $ {C}\subset H$ containing $v_e$.
Then it is easy to check that   the edges of $ {C}$ generate in $G$
a subgraph, $P:=\langle  E( {C}) \rangle$, which is an $O_e$-directed  path having $x$ and  $y$
as ends.
Of course, $P$ does not contain $e$, hence $C_e:=P+e$ is a cycle in $G$.
We now orient $e$ in such a way that $C_e$ becomes  a  cyclically oriented cycle.
This gives an orientation, $O\geq O_e$, on  $G$, which we claim is totally cyclic.
Indeed, let $F\subset E(G)$ be an $O$-directed cut.
Then $e\in F$ (for otherwise $F$ would be a $\tilde{O}$-directed cut of $H$). Hence $F\cap E(C_e)\neq \emptyset$, and hence
  $F\cap E(C_e)$ is a directed cut of the cyclically oriented cycle $C_e$. This is not possible.
  Step 1 is proved.
  
 \
  
   \noindent {\bf Step 2}. {\it  Let $O_T, O_R\in  \OPt_H$ with $O_T\geq O_R$.
   Then there exist  $O_{\gamma^*T},O_{\gamma^*R}\in   \OPt_G$  such that $\gamma_*O_{\gamma^*T}=O_T$,
    $\gamma_*O_{\gamma^*R}=O_R$ and  $O_{\gamma^*T}\geq O_{\gamma^*R}$.}

By hypothesis $T\geq R$, hence    $G-\gamma^*T\supset G-\gamma^*R$.
We   assume $S_0=\{e\}$ and we use the same set-up of Step 1.

We begin by fixing  a totally cyclic    orientation $O_{\gamma^*R}$  induced by $O_{R}$ as described in Step 1.
To define $O_{\gamma^*R}$   on $G-\gamma^*R$ 
 the only choices  we  make
are for    non-loop edges   corresponding to   loops  of $H-R$
(the orientation is chosen arbitrarily, see \ref{convo}), and for the contracted edge $e$, if  $e\in G-\gamma^*R$
(the orientation is chosen to ensure total  cyclicity).

Now,  among all orientations induced by $O_T$ on $G-\gamma^*T$ according to \ref{convo}, we choose one,
written $O_{\gamma^*T}$,
 with the   requirement that it   agrees with $O_{\gamma^*R}$ on $G-\gamma^*R$.
 Hence every non loop-edge    corresponding to a loop  of $H-R$,
 is oriented in the same way as   in  $O_{\gamma^*R}$ and,  more importantly,
if the contracted edge $e$ is contained in  $G-\gamma^*R$  then it has to be  $O_{\gamma^*T}$-oriented
as in $O_{\gamma^*R}$.

 Obviously, $O_{\gamma^*T}\geq O_{\gamma^*R}$. We need to check $O_{\gamma^*T}$ is totally cyclic.
 By construction,  we  need to prove it only  in case $e\in G-\gamma^*R$
 (in the other case the $O_{\gamma^*T}$-orientation on $e$ is given as in Step 1, to ensure $O_{\gamma^*T}$ is totally cyclic).
  By contradiction, let $F$ be a directed cut of $G-\gamma^*T$. Then $e\in F$, for otherwise $F$ would be a cut
  of $H-T$. Hence  $F\cap E( G-\gamma^*R)$  is not empty. Hence $F$ induces a directed cut of $G-\gamma^*R$,
  which is not possible.

 \
  
     \noindent {\bf Step 3}. {\it The restriction of  $\gamma_*$ to $ \cO^0(G-\gamma^*T)$
is a surjection  onto $ \cO^0(H-T). $}
 
 We shall reduce this   to Step 1, to do which  we need to handle the problem that
 $(G-\gamma^*T)/S_0$ may fail to be equal to $H-T$.
 
Consider the   contraction  induced by restricting $\gamma$ to $G-T$
 $$ \gamma_{|G-T}:G-T \la (G-T)/S_0=H-T $$ 
   (using  \ref{ftriv} \eqref{ftriv2}).
We have  $(G-T)_{br}\subset S_0$, hence we can 
   factor $\gamma_{|G-T}$ 
  $$
  \gamma_{|G-T}:G-T
   \la (G-T)/(G-T)_{br} \stackrel{\gamma'}{\la} (G-T)/S_0=H-T.
  $$
  Set $$J:=\frac{G-T}{(G-T)_{br}}, \quad \quad \tilde{J}:=\frac{J}{S_0- (G-T)_{br}}=H-T.
  $$
As $J$ is bridgeless  we can apply
the conclusion of Step 1 to the contraction $\gamma':J\to \tilde{J}$. Hence   $\gamma'_*$
 yields a surjection 
\begin{equation}
 \label{OOHH}
\cO^0(J) \la \cO^0(\tilde{J})= \cO^0(H-T).
\end{equation}
On the other hand we have natural identifications   
$$
\OPt_J
=\OPt_{G-T}= 
\OPt_{(G-T)-(G-T)_{br}} 
=  \OPt_{G-\gamma^*T}
$$
   using \ref{bricor} for the first two  equalities.
Combining   with \eqref{OOHH} we obtain the surjection
 $ 
\OPt_{G-\gamma^*T} \to  \OPt_{H-T}.
$ 
Step 3 and the Proposition  are proved.
 \end{proof}
 
 \noindent
 {\it {Proof of Theorem~\ref{fthm}.}}

The case $b=0$ follows from Proposition~\ref{fsu}. Suppose $b=1$.
 We argue similarly  to the proof of Proposition~\ref{fsu}. We begin by proving that $\ov{\gamma}_*$ induces   a surjection
$  \ocO^1(G)\twoheadrightarrow\ocO^1(H). $
 We can assume $G$ and $H$ connected, and
   $S_0=\{e\}$; we write  $v_e=\gamma(e)$ and $x,y\in V(G)$ for the ends of $e$ ($x\neq y$ otherwise we are done).

 Fix $\tilde{O}\in \cO^1(H)$, then, by \ref{convo}, 
we  have a  $1$-orientation $O_{e}= \tilde{O}^*$  on  $G-e$ induced by $\tilde{O}$.
We shall   prove   we can extend $O_{e}$    by a rooted orientation, $O$, on $G$,
whose bioriented edge is the same as that of $\tilde{O}$,  denoted by $\tilde{e}$.

  As $\tilde{O}$ is rooted, there exists a directed path $\tilde{P}\subset H$ from 
$\tilde{e}$ to $v_e$. It is clear that the edges of $\tilde{P}$ span in $G$ a directed path, $P$, from $\tilde{e}$
to $x$ (say) and not containing $e$. We set $P_e=P+e$ and orient $e$ so that $P_e$ is a directed path from $\tilde{e}$
to $y$. Let $O$ be the so-obtained orientation on $G$; we shall prove it is rooted using  Lemma~\ref{lmfree} \eqref{lmfreeb}.

Let $w\in V(G)$, we must exhibit an $O$-directed path from $\tilde{e}$ to $w$. If $w=x,y$ it suffices to take $P$  or $P_e$.
So we can assume $w$ is also a vertex of $H$ different from $v_e$. Let $\tilde{P}_w\subset H$ be a directed path from  $\tilde{e}$
to $w$. If $\tilde{P}_w$ does not contain $v_e$ then $\tilde{P}_w$ is naturally identified with a directed path in $G$ from $\tilde{e}$
to $w$ and we  are done. If $v_e$ is in  $\tilde{P}_w$,   we can write
$ 
\tilde{P}_w=\tilde{Q_1}+\tilde{Q_2}
$ 
where $\tilde{Q_1}$ is a directed path from $\tilde{e}$ to $v_e$ and  $\tilde{Q_2}$ is a directed path from $v_e$ to $w$ not containing $v_e$. Hence $\tilde{Q_2}$ corresponds to a directed path, $Q_2$, from either $x$ or $y$ to $w$.
In $G$, we attach  $Q_2$ to either $P$ (if $Q_2$ starts at $x$) or $P_e$ (if $Q_2$ starts at $y$)
getting 
  a path in $G$ directed  from  $\tilde{e}$ to $w$. 

 We conclude that the restriction of  $\ov{\gamma}_*$ to 
 $  \ocO^1(G)$ surjects onto $\ocO^1(H). $
 
 The rest of the  proof   is the same as  for  Proposition~\ref{fsu}, Steps 2 and 3, mutatis mutandis.  
 Theorem~\ref{fthm} is proved.
 $\qed$
 
\subsection{Orientations in genus $g$}
 We use     notation  \eqref{APP}.
 
\begin{defi}
\label{Bgdef}
Let $g\geq 2$ and let $b=0,1$. Set
 $$
 \Ag:=\{(G, S):\  G\in \Sg, \  S\in  \AGb\}
 $$
and endow it  with the following partial  order relation:
 $$(G, S)\leq  (H, T)\quad  \quad\quad  \text{ if }   \quad \quad G\leq H  \quad \text{ and }   \quad S  \leq \gamma^*T $$
for some (possibly trivial) contraction 
 $\gamma:G\to H$.
 \end{defi}
 It is easy to check that $ \Ag$ is indeed a poset inducing, for every $G\in \Sg$,
 the poset structure on $ \AGb$ defined earlier.

\begin{prop}
 \label{Bgq}
Let $g\geq 2$ and let $b=0,1$.
\begin{enumerate}[(a)]
 \item
 \label{Bgqa}
The   map 
$ 
\Ag\la \Sg 
$ mapping $(G,S)$ to $G$
  is a quotient of posets.
\item
\label{Bgqb}
The following is a rank  on   $ \Ag$
 $$
 \rho_{\Ag}: \Ag \la \N;\quad \quad (G, S)\mapsto 3g-3-|E(G)|+g(G-S).
 $$
 \end{enumerate}
\end{prop}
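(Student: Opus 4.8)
The plan is to treat the two parts separately; part (a) is short and part (b) carries the real content.

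For part (a), I would first observe that $(G,S)\mapsto G$ is a morphism of posets straight from the definition of the order on $\Ag$, since $(G,S)\leq (H,T)$ already requires $G\leq H$. To see it is a \emph{quotient}, I fix a relation $G\leq H$ in $\Sg$, realized by a contraction $\gamma\colon G\to H$. Because $H$ is stable it is connected, and the fiber $\cA^b_H$ is nonempty (for $b=0$ it contains $H_{br}$ and $E(H)$, for $b=1$ it contains $\emptyset$). Picking any $T\in\cA^b_H$, Lemma~\ref{flm}\eqref{flm0} gives $\gamma^*T\in\cA^b_G$, and by construction $(G,\gamma^*T)\leq (H,T)$ since $\gamma^*T\leq\gamma^*T$. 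These two elements lie over $G$ and $H$ respectively, which is precisely the lifting property demanded of a quotient of posets.

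For part (b) the strategy is to analyze the covering relations of $\Ag$ and verify that $\rho_{\Ag}$ jumps by exactly $1$ across each one. Suppose $(G_1,S_1)\lessdot (G_2,S_2)$ is a cover, witnessed by a contraction $\gamma\colon G_1\to G_2=G_1/S_0$ with $S_1\leq\gamma^*S_2$. The key structural move is to insert the intermediate element $(G_1,\gamma^*S_2)$, which is a valid element of $\Ag$ because $\gamma^*S_2\in\cA^b_{G_1}$ by Lemma~\ref{flm}\eqref{flm0}; one checks directly from the definitions that $(G_1,S_1)\leq (G_1,\gamma^*S_2)\leq (G_2,S_2)$. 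Since we started from a cover, one of these two inequalities must be an equality, and this dichotomy splits the argument. If $G_1=G_2$ the cover lives entirely in a single fiber $\cA^b_{G_1}$, so $S_2$ covers $S_1$ there; as $|E(G_1)|$ is fixed and $\cA^b_{G_1}$ is graded by $S\mapsto g(G_1-S)$ (Lemma~\ref{rkBP}), $\rho_{\Ag}$ increases by exactly $1$. If instead $G_1<G_2$, the intermediate element has a strictly smaller graph than $G_2$, so it must coincide with $(G_1,S_1)$, which forces $S_1=\gamma^*S_2$.

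In the remaining case $G_1<G_2$ with $S_1=\gamma^*S_2$, the first thing to establish is that the contraction is \emph{elementary}, i.e.\ $|S_0|=1$; this is the main obstacle of the proof. If one had $\emptyset\neq S_0'\subsetneq S_0$, I would set $G'=G_1/S_0'$, which is again stable by Remark~\ref{ftrivrk}\eqref{ftriv1}, and factor $\gamma=\gamma''\circ\gamma'$ through $G'$. Taking $S'=(\gamma'')^*S_2\in\cA^b_{G'}$ and using the functoriality $(\gamma')^*\circ(\gamma'')^*=\gamma^*$ from Lemma~\ref{flm}\eqref{flm2}, one verifies $(G_1,S_1)<(G',S')<(G_2,S_2)$, contradicting that the pair is a cover; hence $|S_0|=1$. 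Finally, with $S_1=\gamma^*S_2$ the genus terms cancel by the rank-preservation $g(G_1-\gamma^*S_2)=g(G_2-S_2)$ of Lemma~\ref{flm}\eqref{flm0}, so that $\rho_{\Ag}(G_2,S_2)-\rho_{\Ag}(G_1,S_1)=|E(G_1)|-|E(G_2)|=|S_0|=1$. Combining the two cases shows $\rho_{\Ag}$ rises by $1$ across every cover, hence is a rank; along the way one notes $\rho_{\Ag}\geq 0$ since $|E(G)|\leq 3g-3$ for stable $G$, so the target is indeed $\N$.
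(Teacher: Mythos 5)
Your proof is correct and follows essentially the same route as the paper: part (a) by pulling back $T$ along $\gamma^*$, and part (b) by inserting the intermediate element $(G_1,\gamma^*S_2)$ into a cover, splitting into the cases $G_1=G_2$ (handled by the grading of $\cA^b_{G_1}$ from Lemma~\ref{rkBP}) and $G_1<G_2$ (where $S_1=\gamma^*S_2$ and the genus terms cancel by Lemma~\ref{flm}). The only cosmetic difference is that you prove $|S_0|=1$ directly by factoring the contraction, where the paper instead observes that $G_2$ covers $G_1$ in $\Sg$ and cites Proposition~\ref{rkSg} — the underlying argument is the same.
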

  
\begin{proof}
The   map in \eqref{Bgqa}  is  clearly a surjective morphism of posets. To check that it is a quotient, pick $G,H\in \Sg$
with $G\leq H$. Fix $T\in   \Ab_H$, then $\gamma^*T\in  \AGb$ and, of course,
  $(G, \gamma^* T)\leq (H,T)$. \eqref{Bgqa} is proved.
  
Write $\rho=\rho_{\Ag}$. Fix $(G,S)$ and $(H,T)\in \Ag$   such that $(H,T)$ covers  $(G,S)$.

First, suppose $G\neq H$.  
We claim      $S=\gamma^*T$. By contradiction, suppose   $S< \gamma^*T$.
Then  
$ 
(G,S) <  (G,\gamma^*T) < (H,T),
$ 
a contradiction.  
Hence $S=\gamma^*T$. But then 
$G$ covers $H$ in $\Sg$, indeed if $G<G'<H$ for some $G'\in \Sg$
then 
$ 
(G,S) <  (G', T') < (H,T),
$ 
where $T'$ is the pull-back of $T$ to $G'$ under the contraction $G'\to H$; this is impossible.
As $H$ covers $G$,    Proposition~\ref{rkSg}  gives $|E(G)|=|E(H)|+1$,  hence
$$
\rho (G,S)-(3g-3)= g(G-S)-|E(G)|=g(G-\gamma^*T)-|E(H)|-1.
$$
Now,   Lemma~\ref{flm} \eqref{flm0} yields $g(G-\gamma^*T)=g(H-T)$, hence
$$
\rho (H,T)-\rho(G,S)=g(H-T)-|E(H)| - (g(H-T)-|E(H)|-1)= 1.
$$
As wanted.
Now, suppose $G=H$. Then  $\gamma^*T=T$ and   $T$ covers $S$  
(for otherwise we would have $(G,S)<  (G, S') < (G,T)$ for   $S'$ between $S$ and $T$).
By Lemma~\ref{rkBP} we have 
$ 
g(G-S)= g(G-T)-1=g(H-T)-1.
$ 
Since $|E(G)|=|E(H)|$
  we are done.
\end{proof}

\begin{defi}
\label{Ogdef}
Assume $b=0,1$. Set
 $$
 \Og:=\{(G, \oO_S):\  G\in \Sg,
 \  \oO_S\in \ocO^b(G-S)\}. 
 $$
Let   $(H, \oO_T), (G, \oO_S)\in \Og$. We  set  $(G, \oO_S)\leq (H, \oO_T)$ if 
$G\leq H$ and if
 there exists a contraction   $\gamma:G\to H$  such that
\begin{enumerate}
  \item
 \label{Ogdef1}
 $S \leq \gamma^*T$, or  equivalently (by \ref{fupr}\eqref{fuprb}), $ \gamma_*S \leq T$;
 \item
  \label{Ogdef2}
  $ \overline{\gamma}_*\oO_S \leq \oO_{T}$.
 \end{enumerate}
\end{defi}
The definition is illustrated in Figure 4.  
By   \eqref{Ogdef1} we have  
     $H-T\supset  H-\gamma_*S$. 
Hence $\oO_T$     can    be restricted to $H-\gamma_*S$.
By Definition~\ref{podef}, we require     that this restriction be equal  to $ \overline{\gamma}_*\oO_S$.

\begin{figure}[h]
\begin{equation*}
\xymatrix@=.2pc{
&&&&*{\bullet}  \ar@{-} @/^ 1pc/ [ddrr] \ar@{-}[ddrr]_e \ar@{-}@/_ 1pc/ [ddll] \ar@{-} [ddll]&&&& &&&& &&&&\\
 G =& &&&&&&& \ar@{->}[rrrr]^{\gamma} &&&&& H = & *{\bullet} \ar@{-} [rr] \ar@{-} @/_ 0.5pc/ [rr] \ar@{-} @/^ 0.5pc/ [rr] && *{\bullet} \ar@{-}@(ur,dr) \\ 
&&*{\bullet}\ar@{-}[rrrr] && &&*{\bullet}  && &&\\
&&\\
&&&&  \ar@{-} @/^ 1pc/ [ddrr]  \ar@{-}@/_ 1pc/ [ddll] &&&& &&&& &&&&\\
 S =& &&&&&&& \ar@{->}[rrrr]^{\gamma_*} &&&&& \gamma_* S = & \ar@{-} @/^ 0.5pc/ [rr] && \ar@{-}@(ur,dr) &&& \leq & T = & \ar@{-} @/^ 0.5pc/ [rr] &&\\ 
&&&& &&  && &&\\
&&\\
&&&&*{\bullet}  \ar@{->}[ddrr] \ar@{<-} [ddll]&&&& &&&& &&&&\\
 O_S =& &&&&&&& \ar@{->}[rrrr]^{\gamma_*} &&&&& \gamma_* O_S = & *{\bullet} \ar@{->} [rr] \ar@{<-} @/_ 0.5pc/ [rr] && *{\bullet} &&& \leq & O_T = & *{\bullet} \ar@{->} [rr] \ar@{<-} @/_ 0.5pc/ [rr] && *{\bullet} \ar@{->}@(ur,dr) &&  \\ 
&&*{\bullet}\ar@{<-}[rrrr] && &&*{\bullet}  && &&\\
&&}
\end{equation*}
\caption{An example of the partial order on $\overline{\mathcal{OP}}_{g}^0$: $(G, \oO_S) \leq (H, \oO_T)$ with $\gamma: G \rightarrow H$ contracting $e$. The orientations $O_S$, $\gamma_* O_S$ and $O_T$ are living on $G - S$, $H - \gamma_* S$ and $H - T$, respectively.}
\end{figure}

\begin{prop}
\label{propOg}
Assume $b=0,1$.
Then
 $\Og$ is a poset  such that   the inclusion $\OG \ha \Og$ is a morphism of posets for every $G\in \Sg$.
 Moreover, the following hold.
\begin{enumerate}[(a)]

\item
\label{propOgb}
The      forgetful  maps 
 $$
 \chi: \Og \la \Sg;\quad \quad (G, \oO_S)\mapsto G
   $$
   and     
   $$
\tau:  \Og \la \Ag  ;\quad \quad (G, \oO_S)\mapsto (G,S)
   $$
are  quotients of posets.
\item
\label{propOgc}
The following is a rank  on   $ \Og$
 $$
 \rho_{\Og}: \Og \la \N;\quad \quad (G,  \oO_S)\mapsto 3g-3-|E(G)|+g(G-S).
 $$

 \end{enumerate}
\end{prop}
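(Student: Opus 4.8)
The plan is to erect the statement on the three structural results already in hand: Proposition~\ref{Bgq} for the base poset $\Ag$, Proposition~\ref{poo} for the fibre posets $\OG$, and the functoriality package (Lemma~\ref{forward}, Proposition~\ref{fbar}, Theorem~\ref{fthm}) that links them under edge-contraction. First I would verify that $\Og$ is a poset. Reflexivity comes from the trivial contraction, for which $\gamma^*T=T$ and $\overline{\gamma}_*\oO_S=\oO_S$, so the relation collapses to $\oO_S\leq\oO_S$ in $\OG$. For transitivity, given $(G,\oO_S)\leq(H,\oO_T)$ through $\gamma$ and $(H,\oO_T)\leq(J,\oO_U)$ through $\delta$, I would witness $(G,\oO_S)\leq(J,\oO_U)$ by $\delta\circ\gamma$: condition~\eqref{Ogdef1} follows from $(\delta\circ\gamma)_*S=\delta_*\gamma_*S\leq\delta_*T\leq U$ (Lemma~\ref{forward}\eqref{forward2} and monotonicity), while condition~\eqref{Ogdef2} follows from $\overline{(\delta\circ\gamma)}_*\oO_S=\overline{\delta}_*\overline{\gamma}_*\oO_S\leq\overline{\delta}_*\oO_T\leq\oO_U$, using the functoriality in Proposition~\ref{fbar} and that $\overline{\delta}_*$ is a morphism of posets by Theorem~\ref{fthm}. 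For antisymmetry, $G\leq H\leq G$ in $\Sg$ forces $G=H$, whence both witnessing contractions are trivial and the two relations reduce to $\oO_S\leq\oO_T$ and $\oO_T\leq\oO_S$ in $\OG$, so antisymmetry of $\OG$ gives equality. The same observation—that for fixed $G$ the order induced on $\OG$ via the trivial contraction is exactly its own order—shows the inclusion $\OG\hookrightarrow\Og$ is a morphism of posets.

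For part~\eqref{propOgb}, both maps are monotone directly from the definition: $(G,\oO_S)\leq(H,\oO_T)$ gives $(G,S)\leq(H,T)$ in $\Ag$ by condition~\eqref{Ogdef1}, so $\tau$ is a morphism, and $\chi$ is $\tau$ followed by the projection $\Ag\to\Sg$, which is monotone by Proposition~\ref{Bgq}\eqref{Bgqa}. To see $\tau$ is a quotient I would lift an arbitrary relation $(G,S)\leq(H,T)$ of $\Ag$: fixing a witnessing $\gamma$ and a representative $O_S\in\cO^b(G-S)$ (for $b=1$ with bioriented edge outside $S_0$, available by Lemma~\ref{lmfree}), push it forward to $\gamma_*O_S\in\cO^b(H-\gamma_*S)$ via Proposition~\ref{fprop}\eqref{fprop1}; since $\gamma_*S\leq T$, Lemma~\ref{quoto} furnishes $O_T\in\cO^b(H-T)$ with $O_T\geq\gamma_*O_S$, and then $(G,\oO_S)\leq(H,\oO_T)$ is the desired lift. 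Being the composite of the quotient $\tau$ with the quotient $\Ag\to\Sg$, the map $\chi$ is a quotient as well.

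For part~\eqref{propOgc}, observe $\rho_{\Og}=\rho_{\Ag}\circ\tau$, so by Proposition~\ref{Bgq}\eqref{Bgqb} it is a morphism to $\N$, and the only issue is that it increases by exactly $1$ on covers. Given a cover $(G,\oO_S)<(H,\oO_T)$ witnessed by $\gamma$, the key device is to interpose $(H,\overline{\gamma}_*\oO_S)$: one checks $(G,\oO_S)\leq(H,\overline{\gamma}_*\oO_S)\leq(H,\oO_T)$, the first relation via $\gamma$ (equalities in both conditions), the second via the trivial contraction of $H$. Since we have a cover, one inequality is an equality. If $G=H$ the witnessing contraction is trivial and the cover lies inside $\OG$, where $\oO_S\mapsto g(G-S)$ is a rank by Proposition~\ref{poo}, giving the increment $1$. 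If $G\neq H$ the interposed element must coincide with the top, so $\gamma_*S=T$ and $\overline{\gamma}_*\oO_S=\oO_T$; factoring $\gamma=\delta'\circ\gamma'$ through an intermediate stable graph (as in Proposition~\ref{rkSg}) would yield a strict intermediate element of $\Og$ unless $\gamma$ contracts a single edge $e$, hence $|E(G)|=|E(H)|+1$. It then remains to compare $g(G-S)$ with $g(H-T)$: since $H-T=(G-T)/e$ has genus $g(G-T)$, and since for $b=0$ the inclusion $S\subseteq\gamma^*T=T\cup(G-T)_{br}$ of Proposition~\ref{fupr}\eqref{fupr2} forces $e$ (when it lies in $S$) to be a bridge of $G-T$, removing it preserves the genus, so $g(G-S)=g(H-T)$ and $\rho_{\Og}(H,\oO_T)-\rho_{\Og}(G,\oO_S)=1$.

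I expect the cover analysis of part~\eqref{propOgc} to be the main obstacle. Because $\tau$ is highly non-injective on fibres, a single cover of $\Og$ may project to a trivial step or to a genuine cover of $\Ag$, and the real work is ruling out ``diagonal'' covers that change both the graph and the orientation class by more than one elementary move; the interposed element $(H,\overline{\gamma}_*\oO_S)$ is exactly what separates a relation into its contracting and its fibrewise parts, and the genus bookkeeping through $\gamma^*T=T\cup(G-T)_{br}$ is what keeps $\rho_{\Og}$ additive. I also anticipate that the degenerate case $b=1$ with a total contraction onto the single-vertex graph needs a short separate check, since there the relevant pushforward lands in the unique empty rooted class and the rank identity must be confirmed by hand.
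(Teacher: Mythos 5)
Your verification of the poset axioms and your proof of part (b) are essentially the paper's: transitivity via $(\delta\circ\gamma)_*=\delta_*\circ\gamma_*$ and compatibility of $\ov{\gamma}_*$ with restriction, and the quotient property of $\tau$ by pushing a representative forward and extending it with Lemma~\ref{quoto}. For part (c) you take a genuinely different and much more laborious route. The paper simply observes that the fibres of $\tau$ are antichains (two classes over the same $(G,S)$ are comparable only if equal), deduces that a cover in $\Og$ is sent by $\tau$ to a cover in $\Ag$, and concludes that $\rho_{\Og}=\rho_{\Ag}\circ\tau$ is a rank because $\rho_{\Ag}$ already is one by Proposition~\ref{Bgq}; all the single-edge and genus bookkeeping is thereby delegated to the rank on $\Ag$. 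Your direct cover analysis, with the interposed element $(H,\ov{\gamma}_*\oO_S)$, re-derives that bookkeeping from scratch. That is legitimate, but it is exactly where your argument has a hole.

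The problematic step is the claim that ``the inclusion $S\subseteq\gamma^*T=T\cup(G-T)_{br}$ of Proposition~\ref{fupr}\eqref{fupr2} forces $e$ to be a bridge of $G-T$.'' Proposition~\ref{fupr} gives the \emph{opposite} inclusion: condition \eqref{Ogdef1} of Definition~\ref{Ogdef} is $S\leq\gamma^*T$, which in the reverse-inclusion order means $\gamma^*T\subseteq S$, and $\gamma^*T$ is the smallest set mapping to $T$, so it is contained in every such $S$. The inclusion you need, $S\subseteq\gamma^*T$ (equivalently $S=\gamma^*T$), is precisely equivalent to the conclusion you draw from it ($e\in(G-T)_{br}$ when $e\in S$ for $b=0$; $e\notin S$ for $b=1$), so the argument as written is circular — and the claim genuinely needs proof: if $e\in S$ were not a bridge of $G-T$, then $g(G-S)=g(G-T)-1$ while $g(H-T)=g(G-T)$, and $\rho_{\Og}$ would jump by $2$ across the alleged cover. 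The missing idea is a second application of the cover hypothesis: if $S\neq\gamma^*T$, then $\gamma^*T\in\AGb$, Lemma~\ref{quoto} extends $O_S$ to some $O_{\gamma^*T}$, one checks $\ov{\gamma}_*\oO_{\gamma^*T}=\oO_T$ (both restrict to $\ov{\gamma}_*\oO_S$ on $H-\gamma_*S=H-T$), and $(G,\oO_S)<(G,\oO_{\gamma^*T})<(H,\oO_T)$ contradicts the cover. This is exactly the move the paper makes for $\Ag$ in the proof of Proposition~\ref{Bgq} (``By contradiction, suppose $S<\gamma^*T$\dots''), and it also disposes uniformly of the $b=1$ case that you defer to an unspecified ``separate check.'' With that step supplied, your part (c) closes; without it, it does not.
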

\begin{proof}
 The only property of partial orders which is not an obvious consequence of the definition
 is    transitivity.
 Suppose $(G, O_S)\leq (H, O_T)$ and $(H, O_T)\leq (J,O_U)$, let
 $ \delta:H\to  J 
 $  be a   contraction. Then we have the following contraction,
 $ 
 \delta\circ \gamma:G\la  J. 
 $ 

Next, by \ref{forward}\eqref{forward2} we have
  $(\delta\circ \gamma)_*=  \delta_*\circ \gamma_*$.
  Hence, as  $\gamma_*S \leq T$  and  $\delta_*T \leq U$
 we have 
  $$
(\delta\circ \gamma)_*S= \delta_*(\gamma_*S) \leq \delta_*(T) 
 \leq  U
  $$
proving the first requirement of Definition~\ref{Ogdef}.
Finally, to show  that $\oO_{U}\geq (\ov{\delta\circ \gamma})_*\oO_S$
we must restrict $\oO_U$ to $J-(\ov{\delta\circ \gamma})_*S$ and check it is equal to $(\ov{\delta\circ \gamma})_*\oO_S$.
This is trivial.

\eqref{propOgb}. The map 
$ \chi:\Og \to \Sg$  factors as follows
$$
 \chi:\Og \stackrel{\tau}{\la} \Ag \la \Sg 
$$
and Proposition~\ref{Bgq} states that $\Ag \to \Sg$ is a quotient. Hence it suffices to prove that
$\tau$ is a quotient.  Now, $\tau$    is clearly a surjective morphism of posets. 
Let   $(G,S)\leq (H,T)$ and  let $\gamma:G\to H$ be a contraction 
such that $ S \leq \gamma^*T$. Now
pick $\oO_S\in \OG$,
then $\ov{\gamma}_*\oO_S\in \oOH$.
By Lemma~\ref{quoto}, there exists $\oO_T\in  \oOH$ such that  $\ov{\gamma}_*\oO_S\leq  \oO_T$.
As $\tau(G,\oO_S)=(G,S)$ and $\tau(H,\oO_T)=(H,T)$ the proof of \eqref{propOgb} is complete.

\eqref{propOgc}. Notice that $\rho_{\Og}(G,\oO_S)=\rho_{\Ag}(G,S)$, the latter being the rank   defined in Proposition~\ref{Bgq}.

Now, $\tau$ is such that if $\tau(G,\oO_S)=\tau(G',\oO_{S'})$ then $G=G'$ and $S=S'$, hence
$(G,\oO_S)$ and  $(G',\oO_{S'})$ are not comparable. Hence if     $(H,\oO_T)$  covers $(G,\oO_S)$
then $(H,T)$ covers $(G,S)$. Therefore   $\tau\circ \rho_{\Ag}=\rho_{\Og}$   is a rank on $\Og$.
 The proof is complete.
 \end{proof}

   \subsection{Automorphisms of graphs}
 \label{autsec}
 
We need to extend the functoriality results proved for edge-contractions in Section~\ref{secfun} to isomorphisms of graphs. 
We need the  following, whose proof is trivial.
\begin{prop}
\label{iso}
 Let $\alpha:G\to G'$ be an isomorphism.
 \begin{enumerate}
 \item
 \label{iso1}
Let $b=0,1$. Then we have an  isomorphism of posets
 $$\alpha_*:\AGb\la  \Ab_{G'}; \quad \quad  S \mapsto \alpha_*S=\alpha(S).$$
\item
\label{iso2}
  For   $O_S\in \cO^b(G-S)$ define  $\alpha_*O_S\in \cO^b(G'-\alpha_*S)$ so that, for any   $e\in E(G)$,  the starting half-edge of $\alpha(e)$  is the image under
  $\alpha$ of the starting half-edge of $e$.
Then   we have an isomorphism of   posets   $$
  \alpha_*:\OP^b_G{\la} \OP^b_{G'}; \quad \quad O_S\mapsto \alpha_*O_S.
  $$
\item
 \label{iso3}
The   isomorphism in \eqref{iso2} descends to an isomorphism of  posets 
    $$
 \alpha_*:\oOPGb {\la} \oOP^b_{G'}.
  $$
 \end{enumerate}
\end{prop}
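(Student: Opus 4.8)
The plan is to exploit the fact that a graph isomorphism $\alpha\colon G\to G'$ preserves, by definition, every piece of structure that enters the three statements: it induces bijections on vertices, edges, and half-edges, it preserves weights, and it respects the incidence (ends) of edges. Consequently each of the three maps is obtained by simply transporting the relevant data along $\alpha$, and the only work is to check that the defining conditions are invariant under this transport. I will treat $b=0$ and $b=1$ uniformly wherever possible.

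For \eqref{iso1} I would first note that, since $\alpha_E\colon E(G)\to E(G')$ is a bijection, the induced map $S\mapsto\alpha(S)$ on $\cP(E(G))$ is an order-isomorphism for reverse inclusion \eqref{ri}, with inverse $\alpha^{-1}_*$. It then remains to see that it carries $\AGb$ onto $\Ab_{G'}$. This follows because $\alpha$ restricts to an isomorphism $G-S\to G'-\alpha(S)$ (both are spanning, so they share the vertices of their ambient graphs), and an isomorphism sends a cut $E(Z,Z^c)$ to the cut $E(\alpha(Z),\alpha(Z)^c)$; in particular it matches bridges with bridges and preserves the number of connected components. Hence $(G-S)_{br}=\emptyset\Leftrightarrow(G'-\alpha(S))_{br}=\emptyset$ and $G-S$ is connected iff $G'-\alpha(S)$ is, which covers both values of $b$.

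For \eqref{iso2} I would transport orientations along the induced half-edge bijection exactly as prescribed in the statement, declaring the starting (resp.\ ending) half-edge of $\alpha(e)$ to be the $\alpha$-image of the starting (resp.\ ending) half-edge of $e$; this also matches bioriented edges with bioriented edges, so $\alpha_*O_S$ is again a $b$-orientation. Since $\alpha$ sends the source/target of each edge to the source/target of its image and cuts to cuts, the two conditions of Definition~\ref{1ordef} are preserved, so $\alpha_*O_S$ is totally cyclic (resp.\ rooted) precisely when $O_S$ is. Thus $\alpha_*$ maps $\OP^b_G$ bijectively to $\OP^b_{G'}$ with inverse $(\alpha^{-1})_*$, and it respects the order of Definition~\ref{podef}: $S\leq T$ is preserved by \eqref{iso1}, while restriction of orientations commutes with transport along $\alpha$, so $(\alpha_*O_T)_{|G'-\alpha_*S}=\alpha_*\bigl((O_T)_{|G-S}\bigr)$.

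For \eqref{iso3} the key observation is that $\alpha$ intertwines the divisor maps: writing $\alpha_*\colon\Div(G)\to\Div(G')$ for the pushforward along the vertex bijection, one has $\md^{\alpha_*O}=\alpha_*\md^{O}$, because $\alpha$ preserves weights and carries the half-edges with target $v$ to those with target $\alpha_V(v)$. As $\alpha_*$ on divisors is a bijection, we obtain $\md^{O}=\md^{O'}\Leftrightarrow\md^{\alpha_*O}=\md^{\alpha_*O'}$, i.e.\ $O\sim O'\Leftrightarrow\alpha_*O\sim\alpha_*O'$ in the sense of Definition~\ref{defequivo}, so the isomorphism of \eqref{iso2} descends to the asserted isomorphism on equivalence classes. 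I do not expect any genuine obstacle here; the only point requiring care is the bookkeeping of the half-edge bijection, so that sources, targets, and bioriented edges are matched correctly, after which everything is a direct transport of definitions.
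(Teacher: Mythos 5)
Your proof is correct; the paper itself omits the argument entirely, stating only that it is trivial, and what you have written is exactly the routine transport-of-structure verification that the authors leave to the reader. The one point worth the care you already give it — that $\md^{\alpha_*O}=\alpha_*\md^{O}$ so that equivalence of orientations is preserved and the map descends in \eqref{iso3} — is handled properly.
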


\begin{definition}
We say that $(H, \oO_T), (G, \oO_S) \in \oOP_g^b$ 
are {\it conjugate}, and write 
 $(H, \oO_T) \equiv (G, \oO_S)$, if $G = H$ and there exists  $\alpha \in \Aut(G)$ such that $\alpha_*\oO_T  = \oO_S$. 
\end{definition}

Conjugacy  is clearly an equivalence relation on $\Og$. 
We denote
$$
\qOPGb=\oOPGb/_\equiv  \quad  \quad \text{and} \quad \quad  \qOg = \Og/_\equiv
$$
and write $ [O_S]$ and $(G, [O_S])$ for an element of $\qOPGb$ and   $\qOg$ respectively.

\begin{prop} \label{cOP} Notation as above.
We    endow $\qOg$ with the following partial order: 
$(G, [O_S]) \leq (H, [O_T]) $  if there exist $\oO_{T'} \in [O_T]$ and $\oO_{S'} \in [O_S]$ such that $(G, \oO_{S'}) \leq (H, \oO_{T'}) $ in $\oOP^b_g$. 

Then  the quotient   $\oOP^b_g \rightarrow [\mathcal{OP}^b_g]$ is a quotient of posets,  the inclusion $[\OPGb] \ha  [\mathcal{OP}^b_g]$   a morphism of posets, and the forgetful map $[\mathcal{OP}^b_g]\to \Sg$ is a quotient of posets. 
Furthermore $$\rho_{[\mathcal{OP}^b_g]}(G, [O_S]) = 3g - 3 - |E(G)| + g(G - S)$$ is a rank function. 

\end{prop}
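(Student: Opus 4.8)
The plan is to apply the abstract Lemma~\ref{posettriv} to the finite poset $\oOP^b_g$ equipped with the conjugacy relation $\equiv$, since that lemma is designed precisely to equip a quotient with a poset structure for which the quotient map is automatically a quotient of posets, and it delivers exactly the order stated in the Proposition. So first I would verify the hypothesis of Lemma~\ref{posettriv}: given $(G,\oO_S)\leq (H,\oO_T)$ in $\oOP^b_g$ and an arbitrary conjugate $(H,\oO_{T'})\equiv (H,\oO_T)$, I must exhibit a conjugate of $(G,\oO_S)$ lying below $(H,\oO_{T'})$.

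The key observation — and the only place a genuine argument is needed — is that $(G,\oO_S)$ \emph{itself} works. Write $\oO_{T'}=\ov{\beta}_*\oO_T$ for some $\beta\in\Aut(H)$ (conjugacy forces the graph to remain $H$), and let $\gamma:G\to H$ be a contraction witnessing $(G,\oO_S)\leq(H,\oO_T)$, so that $\gamma_*S\leq T$ and $\ov{\gamma}_*\oO_S\leq\oO_T$. Then $\gamma':=\beta\circ\gamma:G\to H$ is again a contraction (a contraction composed with an automorphism of the target), and by the functoriality of the direct-image maps established in Lemma~\ref{forward}, Proposition~\ref{fbar} and Proposition~\ref{iso}, together with the fact that $\beta_*$ and $\ov{\beta}_*$ are isomorphisms of posets, I obtain $\gamma'_*S=\beta_*(\gamma_*S)\leq\beta_*T=T'$ and $\ov{\gamma'}_*\oO_S=\ov{\beta}_*\ov{\gamma}_*\oO_S\leq\ov{\beta}_*\oO_T=\oO_{T'}$. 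Hence $(G,\oO_S)\leq(H,\oO_{T'})$ via $\gamma'$, verifying the hypothesis with the trivial choice of conjugate. The naive worry here is the need to lift $\beta$ to an automorphism of $G$ (generally impossible across a contraction); this is sidestepped entirely by composing on the target instead. Lemma~\ref{posettriv} then simultaneously shows $[\mathcal{OP}^b_g]$ is a poset with the stated order and that $\oOP^b_g\to[\mathcal{OP}^b_g]$ is a quotient of posets.

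For the two remaining structural claims I would argue formally. The poset structure on $[\OPGb]$ comes from applying Lemma~\ref{posettriv} to $\OG$ with $\Aut(G)$-conjugacy, whose hypothesis holds at once because $\alpha_*$ is a poset automorphism of $\OG$ by Proposition~\ref{iso} (take the conjugate $\alpha_*\oO_S$); since $\Aut(G)$-conjugacy on $\OG$ is the restriction of $\equiv$ and the inclusion $\OG\hookrightarrow\Og$ is a morphism of posets by Proposition~\ref{propOg}, the descended map $[\OPGb]\hookrightarrow[\mathcal{OP}^b_g]$ is a morphism of posets. For the forgetful map, the composite $\oOP^b_g\xrightarrow{\pi}[\mathcal{OP}^b_g]\to\Sg$ equals $\chi$, which is a quotient of posets by Proposition~\ref{propOg}; since $\pi$ is a surjective morphism of posets, a short diagram chase (given $G\leq H$ in $\Sg$, lift to a comparable pair in $\oOP^b_g$ via the quotient $\chi$, then push down through $\pi$) shows $[\mathcal{OP}^b_g]\to\Sg$ is a quotient of posets as well.

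Finally, for the rank function I would first note that $\rho_{\Og}$ from Proposition~\ref{propOg} is constant on conjugacy classes, since conjugate pairs share $|E(G)|$ and satisfy $g(G-\alpha(S))=g(G-S)$ (genus is an isomorphism invariant); hence $\rho_{[\mathcal{OP}^b_g]}$ is well defined with $\rho_{[\mathcal{OP}^b_g]}\circ\pi=\rho_{\Og}$. To see it is a rank, suppose $(H,[O_T])$ covers $(G,[O_S])$ and lift to strictly comparable representatives $(G,\oO_{S'})<(H,\oO_{T'})$ in the graded poset $\oOP^b_g$, so that $k:=\rho_{[\mathcal{OP}^b_g]}(H,[O_T])-\rho_{[\mathcal{OP}^b_g]}(G,[O_S])\geq 1$. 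If $k\geq 2$, I would take an element $z$ one rank above $(G,\oO_{S'})$ on a saturated chain to $(H,\oO_{T'})$ and push it down by $\pi$; then $\rho_{[\mathcal{OP}^b_g]}$ takes three distinct values along $(G,[O_S])\leq\pi(z)\leq(H,[O_T])$, forcing these inequalities to be strict and contradicting the covering relation. Hence $k=1$. I expect this last step — that covering relations descend correctly through the quotient — to be the most delicate point, but it reduces cleanly to the gradedness of $\oOP^b_g$ proved in Proposition~\ref{propOg} together with the well-definedness of the descended rank.
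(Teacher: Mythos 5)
Your proof is correct and follows essentially the same route as the paper's: both reduce to Lemma~\ref{posettriv} and verify its hypothesis by twisting the witnessing contraction by the relevant automorphism --- the paper precomposes with $\alpha\in\Aut(G)$ on the source (using the ``switched'' form of that lemma noted right after its statement), while you postcompose with $\beta\in\Aut(H)$ on the target (using the original form), which are mirror-image arguments. Your treatment of the rank function is more detailed than the paper's one-line remark, but the content is the same.
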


\begin{proof}
Let $\gamma:G\to H$ be a contraction such that $(G,\oO_S)\leq (H,\oO_T)$.

By Lemma~\ref{posettriv}, it suffices to prove that for any $\oO_{S'}\equiv \oO_S$ there exists  $\oO_{T'}\equiv \oO_T$ such that $\oO_{S'}\leq \oO_{T'}$.
We have $\oO_S=\alpha_*\oO_{S'}$ for some $\alpha\in \Aut(G)$.  
If $\gamma$ is trivial then $\oO_S\leq \oO_T$ and $\oO_{S'}=\alpha_*^{-1}\oO_S\leq \alpha_*^{-1}\oO_T$, as $\alpha_*^{-1}\oO_T \equiv \oO_T$ we are done.

Suppose $\gamma$ nontrivial.
By hypothesis
$ (\oO_T)_{|H-\gamma_*S}= \ov{\gamma}_*\oO_S.
$ 
Let $\gamma'$ be the contraction obtained by composing $\alpha$ with $\gamma$:
$$
\gamma':G\stackrel{\alpha}{\la}G\stackrel{\gamma}{\la}H
$$
We have $\oO_{S'}\in \oO(G-\alpha^{-1}_*S)$; set $S'=\alpha^{-1}_*S$. We claim 
$$(\oO_T)_{|H-\gamma'_*S'}=\ov{\gamma}'_*\oO_{S'}$$  which   of course implies
$\oO_{S'}\leq \oO_T$.
We have
$$
\ov{\gamma}'_*\oO_{S'}=\ov{\gamma}_*\ov{\alpha}_*\oO_{S'}=\ov{\gamma}_* \oO_S=(\oO_T)_{|H-\gamma_*S}=(\oO_T)_{|H-\gamma_*\alpha_*\alpha^{-1}_*S}
 =(\oO_T)_{|H-\gamma'_* S'}  
$$
  as claimed.
Hence $[\mathcal{OP}^b_g]$ is a poset and  
 $\oOP^b_g \rightarrow [\mathcal{OP}^b_g]$ a quotient of posets.
 
 The inclusion  $[\OPGb] \ha [\mathcal{OP}^b_g]$ is obviously a morphism of poset.  

By Proposition~\ref{propOg}     the forgetful map $\chi:\oOP^b_g \to \Sg$ is a quotient of posets. It is clear that $\chi$ factors
as follows
 $$\chi:\oOP^b_g \la [\mathcal{OP}^b_g]\la  \Sg.
 $$ 
Since $\oOP^b_g \to [\mathcal{OP}^b_g]$ is a quotient,  $[\mathcal{OP}^b_g]\to \Sg$ is also a quotient.

The claim about the rank  follows from the fact that   conjugate elements of $\oOP^b_g$ have the same rank. 
\end{proof}

  \section{Stratifying the compactified universal   Picard variety.}
    \label{finsec} 
    In this section we turn to algebraic geometry and prove our main results.
    We work over an algebraically closed field $k$.
 \subsection{Dictionary between graphs and nodal  curves} 
  \label{dict} From now on, 
  $X$ will be an algebraic, projective, reduced curve over $k$    having at most nodes as singularities,  and whose   
   (weighted) dual graph  is    $G=(V,E)$.
Recall that $V$ is the set of irreducible components of $X$ and $E$ is the set
of nodes of $X$, with an edge/node joining the two vertices/components on which it lies.
The weight of a vertex/component is its geometric genus.
We shall use the same symbols for edges and nodes,
but we shall write $X=\cup_{v\in V}C_v$ with $C_v$   irreducible component.
 The   genus of $X$  equals  the genus of $G$, and $X$ is stable if so is $G$.
We shall   say, somewhat abusively,   that ``$X$ is dual to $G$".

 Let $S\subset E$ and let
 $ 
\nu_S:X^{\nu}_S\to X
$ 
the  normalization of $X$ at $S$. The dual graph of $\XN_S$ is $G-S$, and 
 $g(X^{\nu}_S)=g(G-S).$ 
  We denote by $ \hX_S$  the   nodal curve  
  obtained by attaching to $\XN_S$, for every node $e\in S$, a smooth rational component, 
  named {\it exceptional component}, to the two   branches of  $\nu_S^{-1}(e)$. 
   Of course, $X$ and $\hX_S$ have the same genus.
    
  If $X$ is a stable curve, the curves of the form $\hX_S$  are called {\it quasistable}.
Two exceptional components of a quasistable curve never intersect.

The dual graph of $\hX_S$  will be denoted by $\hG_S$. So,   $\hG_S$ is obtained from $G$ by inserting a  vertex of weight zero, $v_e$, in every edge $e\in S$. 
We refer to  $v_e$ as the {\it exceptional} vertex
corresponding to the exceptional component $C_{v_e}$ of $\hX_S$, and we write $h_e, j_e$ for the two edges of $\hG_S$ adjacent to $v_e$. We have
$
 \hX_S=\XN_S\cup (\cup_{e\in S}C_{v_e}).
$

The set of non-exceptional vertices of $\hG_S$ is naturally identified with $V(G)$. 
We denote $\hat{S}=\{h_e, j_e, \  \forall e\in S\}\subset E(\hG_S)$ so that we have a natural inclusion 
$ 
G-S\subset \hG_S-\hat{S}.
$ 

 Let $L$ be a line bundle on $X$, the multidegree of $L$ is defined as follows:
 $\mdeg (L)=\{\deg_{C_v}L,\  \  \forall v\in V\}$.
We  shall   identify $\mdeg (L)$ with a divisor on $G$, whose $v$-coordinate is $\deg_{C_v}L$, so that we have a map
$$
\mdeg:\Pic(X)\la \Div(G);\quad \quad L\mapsto \mdeg (L).
$$
Then
$ 
\Pic(X)=\sqcup_{\md \in \Div(G)}\Pic^{\md}(X)
$ 
where
 $\Pic^{\md}(X):=\mdeg^{-1}(\md)
$ 
is the moduli space of line bundles of multidegree $\md$. Of course, $\Pic^{\md}(X)$
 is isomorphic to the generalized Jacobian, $\Pic^{(0,\ldots,0)}(X)$, of $X$. 
    
  \subsection{Compactified   Jacobians   of a   curve}
  \label{CPsubsec}
Let
 $X$ be  a stable curve of genus $g$.
 We consider $\PXd$,  its  compactified degree-$d$  Jacobian.
 $\PXd$ is a connected, reduced, possibly reducible, projective variety of pure dimension $g$
 whose smooth locus  is a disjoint union
of (finitely many) $g$-dimensional varieties parametrizing line bundles of degree $d$ on $X$.
 
 Several constructions of $\PXd$  exist  in the literature,   \cite{OS}, \cite{Cthesis}, \cite{Simpson}, \cite{Esteves},
and, except for the one of \cite{Esteves}, they  have been proved to be isomorphic to one another
even though their modular interpretations
are different. 
We here adopt  the modular interpretation given in \cite{Cthesis},
according to which $\PXd$
 parametrizes
``stably balanced" line bundles  of degree $d$ on certain quasistable curves having   stable model $X$. To give the precise description we need some definitions.

\begin{defi}
\label{stabledef}
Let $G=\sqcup_{i=1}^cG_i$ have   $c$  connected components.
   \begin{enumerate}[(a)]
 \item
 \label{stableg}
A divisor $\md \in \Div^g(G)$ is {\it stable} if $c=1$  and if for every $Z\subset V(G)$
we have
$ 
  |\md_Z|>g(Z)-1.
 $ 
 
   \item
  Suppose $c=1$. A divisor $\md \in \Div^{g-1}(G)$ is {\it stable}  if   for every $Z\subsetneq V(G)$ we have
$ 
  |\md_Z|>g(Z)-1.
 $ 

\noindent
For arbitrary $c$,  a divisor $\md \in \Div^{g-c}(G)$ is stable if   its restriction
  to every   $G_i$ is stable of degree $g(G_i )-1$. 
  \end{enumerate}
  \end{defi}

 The  somewhat artificial  requirement, in    \eqref{stableg},  that    stable divisors of degree $g$ exist only on connected graphs, serves our goals and  simplifies  terminology.

As we are interested in the cases $d=g$ and $d=g-c$,
 we shall often unify our statements by writing
$$
d=g-c+b \quad \quad  \text{with} \quad \quad b=0,1.
$$
If $G$ is a graph of genus $g$ with $c$ connected components, for $b=0,1$
we set
$$
\Sigma^{b}(G):=\{\md\in \Div^{g-c+b}(G):\  \md \text{ is stable}\}.
$$

\begin{defi}
\label{stbaldef}
 Let $X$ be a stable curve of genus $g$ and $G$ its dual graph.   Let $S\subset E(G)$ 
and  $b=0,1$. A line bundle $\hL_S\in \Pic^{g-1+b}\hX_S$, and its multidegree 
$\mdeg\hL_S$,  are said to be {\it stably balanced} if
  
\begin{enumerate}[(a)]
 \item
$\hL_S$ has degree 1 on   each exceptional component;
  \item
    $\mdeg_{\XN_S} \hL_S$ is a stable  divisor  on $G-S$ of degree $g(G-S) -c(G-S)+b$.
    \end{enumerate}
  
 Line bundles  $\hL_S\in \Pic^{g-1+b}\hX_S$ as above are referred to as ``stably balanced line bundles of $X$".
 Two stably balanced line bundles, $\hL_S$ and $\hM_T$, of $X$ are {\it equivalent} if $S=T$ and if  
 their restrictions to $\XN_S$ are isomorphic.
\end{defi}

By definition, $\hL_S$ has total degree $g-1+b$ and
 degree $1$ on every exceptional component, hence   the restriction of $\hL_S$ to $\XN_S$ satisfies
$$
\deg_{\XN_S}\hL_S=g-1+b-|S|.
$$

 \begin{remark}
 \label{degst}
For $S\subset G$ we have
$$
\Sigma^{b}(G-S)=\{\md\in \Div^{g(G-S)-c(G-S)+b}(G):\  \md \text{ is stable}\},
$$
and   a  divisor in $\Sigma^{b}(G-S)$ has total degree $g-1+b-|S|$.
 \end{remark}
  From \cite{Cthesis}      we have
\begin{fact}
\label{Cthesis}
 Let $X$ be a stable curve of genus $g$ and let $b=0,1$. Then
 $ \PXb$ is a coarse moduli space for  equivalence classes of stably balanced line bundles
of degree $g-1+b$ of $X$.
\end{fact}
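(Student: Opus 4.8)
Since the statement is attributed directly to \cite{Cthesis}, the point is not to construct $\PXb$ from scratch but to reconcile the modular description established there with Definitions~\ref{stabledef} and~\ref{stbaldef}. The plan is therefore to recall from \cite{Cthesis} that $\PXb$ arises as a GIT quotient which coarsely represents a functor of line bundles on the quasistable models $\hX_S$ of $X$, and then to check that the balancing condition imposed in \cite{Cthesis} is precisely the condition of being \emph{stably balanced} in the sense used here. The construction itself, the projectivity, reducedness, and pure dimension $g$ of $\PXb$, and the coarse representability are all inputs from \cite{Cthesis}; only the dictionary between the two notions of stability needs to be supplied.

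Recalling the construction, the closed points of $\PXb$ correspond to suitable equivalence classes of line bundles subject to two constraints: the degree on each exceptional component is forced to equal $1$, which is exactly item (a) of Definition~\ref{stbaldef}, and the multidegree obeys the \emph{basic inequality} of \cite{Cthesis}. The heart of the matter is thus to show that the strict (stably balanced) form of the basic inequality is equivalent to item (b), namely that $\mdeg_{\XN_S}\hL_S$ be a stable divisor on $G-S$ of the degree recorded in Remark~\ref{degst}. To do this I would rewrite the basic inequality, originally phrased for a complete subcurve $W\subset\hX_S$ in terms of $\deg_W\omega_X$ and the number of points in which $W$ meets its complement, in the divisor language on $G-S$: taking $W$ supported on a vertex set $Z\subseteq V(G-S)$ and substituting the adjunction identity $\deg_W\omega_X=2g(Z)-2+|E(Z,Z^c)|$, the strict inequality collapses exactly to $|\md_Z|>g(Z)-1$. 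This is condition \eqref{lm0d} of Lemma~\ref{lm0} for $b=0$ and condition \eqref{lmO1d} of Lemma~\ref{lmO1} for $b=1$, i.e. stability of $\md$ in the sense of Definition~\ref{stabledef}; summing over a connected component fixes the total degree $g(G-S)-c(G-S)+b$, and the two cases $b=0,1$ correspond respectively to $d=g-1$ and $d=g$. Finally I would note that the rule identifying two stably balanced bundles when $S=T$ and their restrictions to $\XN_S$ are isomorphic is exactly the orbit equivalence built into the GIT quotient, so the bijection on points is promoted to the isomorphism of coarse moduli spaces already present in \cite{Cthesis}.

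The main obstacle is purely the bookkeeping of degrees. The basic inequality takes a symmetric form for $d=g-1$ but an asymmetric one for $d=g$, so the collapse to $|\md_Z|>g(Z)-1$ must be verified separately in the two cases, and one must track carefully how blowing up a node in $S$ and placing degree $1$ on the resulting exceptional $\PP^1$ redistributes the total degree $g-1+b$ onto $\XN_S$, leaving the residual degree $g-1+b-|S|$ of Remark~\ref{degst}. Keeping the per-component, residual, and total degrees consistent is where the translation could slip, but no idea beyond Definition~\ref{stabledef} and the adjunction computation is needed.
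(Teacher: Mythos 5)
Your proposal is correct in substance and follows the same overall route as the paper: Fact~\ref{Cthesis} is not proved from scratch but obtained by citing the construction and coarse representability from \cite{Cthesis} and then reconciling the ``stably balanced'' terminology there with Definitions~\ref{stabledef} and~\ref{stbaldef}. The difference is in how the reconciliation is handled. The paper does not carry out the translation of the basic inequality at all: for $b=0$ it simply points to \cite{CV2}, and for $b=1$ it invokes a separate reference, \cite{Cneron} (Thm.~5.9 and especially Lemma~7.6 of Sect.~7), where the degree-$g$ case of the dictionary is already established. You instead propose to do the translation by hand. That is a legitimate and arguably more self-contained route, but be aware that the one place you describe loosely is exactly the place the paper outsources: for $d=g$ the lower bound of the basic inequality for a subcurve $W$ of $\hX_S$ does \emph{not} collapse directly to $g_W-1$ under the adjunction substitution; it comes out as $g_W-1+\tfrac{\deg_W\omega}{2g-2}$, so one must use integrality of degrees (and treat separately the subcurves with $\deg_W\omega\in\{0,2g-2\}$, i.e.\ those built from exceptional components or their complements) to land on $|\md_Z|>g(Z)-1$. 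You flag the asymmetry and the exceptional-component bookkeeping as the main obstacles, which is the right diagnosis, but if you carry this out you should also justify reducing from arbitrary subcurves of $\hX_S$ (possibly containing exceptional components) to induced subcurves of $\XN_S$, i.e.\ to subsets $Z\subset V(G-S)$; that reduction is part of what \cite[Lemma 7.6]{Cneron} supplies.
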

The above statement uses a different terminology   from the original one (\cite[Prop. 8.2]{Cthesis}) so we need a few words to explain that it is indeed the same.
If $b=0$ this is already known (see \cite{CV2} for example), so let us concentrate on the case $b=1$, i.e. degree $g$.
For degree $g$ the results of \cite{Cneron}, such as  Thm. 5.9, apply in their strongest form. 
Moreover, from Sect. 7 (in particular Lemma 7.6), we get that our definition~\ref{stbaldef} coincides with the definition of  stably balanced line bundles  given there.

 We need to establish an explicit   connection between  Definitions~\ref{stabledef} and \ref{stbaldef}.
For any   quasistable curve $\hX_S$  we 
  have a (not unique) contraction 
$$
\delta:\hG_S\la G=\hG_S/S_0,
$$
with $S_0=\{j_e, \  \forall e\in S\}$ where  $j_e$ is an edge of $\hG_S$ adjacent to the exceptional vertex $v_e$. Clearly, $\delta$  depends on the choice of
  $j_e$ for each $e\in S$.

Now,  let $\md\in \Div(G)$.
We denote by $\widehat{d}\in \Div(\hG_S)$ the following divisor

$$
\widehat{\md}_v: =\begin{cases}
{\md}_v & \text{ if } v\in V(G)\\
1 &  \text{ if }v=v_e,\  e\in S. 
\end{cases}
$$
In short,  $\widehat{d}$ extends $\md$
with  degree $1$ on all exceptional vertices.

We have the following  simple  fact, for which we use notation \eqref{mcv}.

\begin{lemma}
\label{exclm} 
Let $X$ be   stable   and $G$ its dual graph.
Let $\md_S$ be a stable divisor on $G-S$.
Then $\widehat{\md_S}$ is stably balanced and we have a surjective map
$$
\Pic^{\widehat{\md_S}}(\hX_S)\la \Pic^{\md_S}(\XN_S);\quad \quad \hL\mapsto \hL_{|\XN_S}.
$$
For any   $\delta:\hG_S\to G$ as above we have \  
$ 
\delta_*\widehat{\md_S} =\md_S+ \mc^{\delta}.
$ 
 
\end{lemma}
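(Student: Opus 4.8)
The plan is to verify the three assertions in turn, each being a short unwinding of a definition.

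For the first, I would check the two conditions of Definition~\ref{stbaldef} directly. By construction $\widehat{\md_S}$ has $\widehat{\md_S}_{v_e}=1$ on every exceptional vertex, which is condition (a), while its restriction to the non-exceptional vertices (identified with $V(G)$) is exactly $\md_S$. Since $\md_S$ is stable on $G-S$, Remark~\ref{degst} shows it has degree $g(G-S)-c(G-S)+b$, which is condition (b); in particular the total degree of $\widehat{\md_S}$ is then $g-1+b$, consistent with $\hL\in\Pic^{g-1+b}(\hX_S)$.

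For the surjectivity, recall $\hX_S=\XN_S\cup(\cup_{e\in S}C_{v_e})$ with each $C_{v_e}\cong\PP^1$ glued to $\XN_S$ at the two branch points of $\nu_S^{-1}(e)$. Given $M\in\Pic^{\md_S}(\XN_S)$, I would build a preimage by placing $\cO_{\PP^1}(1)$ on each exceptional component and choosing arbitrary gluing isomorphisms of the fibers at all the nodes joining the $C_{v_e}$ to $\XN_S$. The resulting $\hL$ on $\hX_S$ has degree $1$ on each $C_{v_e}$ and restricts to $M$, so $\mdeg\hL=\widehat{\md_S}$ and $\hL\mapsto M$; as the restriction to $\XN_S$ does not depend on the gluing data, $M$ is attained and the map is onto.

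The last identity is the only place asking for care, so I would treat it as the main (mild) obstacle. Writing $y_e\in V(G)$ for the endpoint of $e$ incident to $j_e$, the contraction $\delta$ of $S_0=\{j_e\}$ merges each exceptional vertex $v_e$ into $y_e$ and is the identity on the remaining vertices, so for $w\in V(G)$ we have $\delta^{-1}(w)=\{w\}\cup\{v_e: y_e=w\}$. As $\widehat{\md_S}_{v_e}=1$, this gives $(\delta_*\widehat{\md_S})_w=(\md_S)_w+|\{e\in S: y_e=w\}|$. On the other hand, applying \eqref{mcv} with the full edge set $E(\hG_S)$ gives $\mc^\delta_w=|\{j_e\in S_0:\delta(j_e)=w\}|$, and $\delta(j_e)=y_e$, so $\mc^\delta_w=|\{e\in S: y_e=w\}|$; the two counts agree, yielding $\delta_*\widehat{\md_S}=\md_S+\mc^\delta$. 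The only subtlety is the bookkeeping when $e$ is a loop, or when several edges of $S$ share the vertex $y_e$, so that the corresponding components of $\langle S_0\rangle$ collapse together; but this is handled automatically by the description of $\delta_V$, and I expect no genuine difficulty.
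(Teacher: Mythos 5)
Your proof is correct and follows essentially the same route as the paper: all three assertions are direct unwindings of the definitions, with the last identity reduced to matching, over each vertex $w\in V(G)$, the number of contracted edges $j_e$ against the excess of $(\delta_*\widehat{\md_S})_w$ over $(\md_S)_w$. If anything, your bookkeeping is slightly more careful than the paper's, whose proof states $\mc^{\delta}_v\in\{0,1\}$ and thereby glosses over the case where several exceptional vertices collapse onto the same vertex of $G$ (where $\mc^{\delta}_v$ can exceed $1$); your count $\mc^{\delta}_w=|\{e\in S:\ y_e=w\}|$ handles this correctly and yields the same conclusion.
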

\begin{proof}
A divisor on $G-S$ is also a divisor on $G$, so the first part  follows trivially by definition.
Next, recall  that  $\mc^{\delta}_v$ is the number of edges   mapped  to $v\in V(G)$ by $\delta$.
Hence 
$ 
 \mc^{\delta}_v = 
0$ if $\delta^{-1}(v)=v$, and   $
 \mc^{\delta}_v =1$  otherwise. 
Since the value of $\widehat{\md_S}$ on exceptional vertices is $1$ we have
$$
(\delta_*\widehat{\md_S})_v =\begin{cases}
(\widehat{\md_S})_v & \text{ if } \delta^{-1}(v)=v\\
(\widehat{\md_S})_v+1 &  \text{  otherwise.}
\end{cases}
$$
Hence
$  
(\delta_*\widehat{\md_S} - \mc^{\delta})_v = (\widehat{\md_S})_v =(\md_S)_v.
$  
\end{proof}

  \subsection{Combinatorics of   compactified  Jacobians}
  We shall now connect to the material of the earlier sections.
  
 \begin{lemma} \label{orient}

Let $G$ be connected of genus $g$. 

\begin{enumerate}[(a)]

\item
\label{orienta} Let $\underline{d} \in \Div^{g-1}(G)$. There exists a $0$-orientation, $O$  such that  $\underline{d} = \underline{d}^O$ if and only if $|\underline{d}_{Z}| \geq g(Z) - 1$ for all  $Z\subset V$.

\item 
\label{orientb} For any $\underline{d} \in \Sigma^1(G)$   there exists a $1$-orientation, $O$, on $G$ such that $\underline{d} = \underline{d}^O$.

\end{enumerate}

\end{lemma}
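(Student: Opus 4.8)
The plan is to translate both statements into the language of prescribed in-degrees and then invoke the classical orientation theorem of Hakimi. For part \eqref{orienta}, recall that a $0$-orientation $O$ satisfies $\md^O_v = w(v)-1+\mt^O_v$, so realizing a fixed $\md$ amounts to finding an orientation whose number of incoming half-edges at each $v$ equals $t_v:=\md_v-w(v)+1$. The ``only if'' direction is immediate: if $\md=\md^O$ then for every $Z$ with $G[Z]$ connected, formula \eqref{degtZ} (with $b(Z)=0$) gives $|\md_Z|=g(Z)-1+t^O(Z)\geq g(Z)-1$ since $t^O(Z)\geq 0$; this is the relevant instance of the inequality, the case of connected $Z$. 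For the ``if'' direction I would first check that the data $t_v$ are admissible: the instance $Z=\{v\}$ of the hypothesis forces $t_v\geq 0$, and $|\md|=g-1$ gives $\sum_v t_v=|E|$. It then remains to verify Hakimi's subset condition $\sum_{v\in Z}t_v\geq |E(G[Z])|$ for every $Z\subseteq V$. Unwinding the definitions of $t_v$ and of $g(Z)$, this is exactly $|\md_Z|\geq g(Z)-c(G[Z])$, which for connected $Z$ is the hypothesis and for general $Z$ follows by summing over the connected components of $G[Z]$ (using $g(Z)=\sum_i g(Z_i)$). Hakimi's theorem \cite{Ha65} then produces an orientation with in-degree $t_v$ at each $v$, which is the desired $O$.

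For part \eqref{orientb} the strategy is to reduce to \eqref{orienta} by peeling off one unit of degree at a well-chosen vertex and re-inserting it through a single bioriented edge. Fix $\md\in\Sigma^1(G)$, so $G$ is connected, $|\md|=g$, and $|\md_Z|>g(Z)-1$ for all nonempty $Z$. First I would locate a vertex $v_0$ with $\md_{v_0}\leq \deg v_0+w(v_0)-1$: if no such vertex existed we would have $\md_v\geq \deg v+w(v)$ for all $v$, and summing would give $g=|\md|\geq 2|E|+\sum_v w(v)$, contradicting $g=\sum_v w(v)-|V|+|E|+1$ whenever $E\neq\emptyset$. (The edge-free case $G=\{v_0\}$ is handled directly by Convention~\ref{empty}.) Let $\md'$ be $\md$ with its $v_0$-coordinate lowered by $1$; then $|\md'|=g-1$, and the strict stability of $\md$ shows $\md'$ satisfies the hypothesis of \eqref{orienta}: for connected $Z\ni v_0$ we have $|\md'_Z|=|\md_Z|-1\geq g(Z)-1$ because $|\md_Z|\geq g(Z)$, while for $Z\not\ni v_0$ nothing changes.

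By part \eqref{orienta} there is a $0$-orientation $O'$ with $\md^{O'}=\md'$. The point of the choice of $v_0$ is that its out-degree in $O'$ equals $\deg v_0-\mt^{O'}_{v_0}=\deg v_0-(\md_{v_0}-w(v_0))\geq 1$, so $v_0$ is the source of some edge $e$ of $O'$. Biorienting $e$ while leaving every other edge as in $O'$ produces a $1$-orientation $O$ whose only change in target counts is one extra target at $v_0$; hence $\md^O=\md^{O'}+\md''=\md$, where $\md''$ carries a single $1$ at $v_0$. This is the required $O$ (and by Lemma~\ref{lmO1} it is automatically rooted).

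The main obstacle, and the step needing the most care, is the source-edge argument in part \eqref{orientb}: one must guarantee that after realizing $\md'$ the chosen vertex $v_0$ is not a sink, which is precisely why $v_0$ is selected by the global counting inequality rather than arbitrarily, and why that inequality must be shown compatible with $|\md|=g$. A secondary point to treat carefully is the scope of the subset condition in \eqref{orienta}: the inequality is genuinely an assertion about connected $Z$ (this is the instance matching \eqref{degtZ} and feeding Hakimi's criterion), and the reduction to connected subgraphs on both sides of the criterion must be carried out explicitly.
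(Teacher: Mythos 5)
Your proof is correct and follows essentially the same route as the paper: part (a) is Hakimi's theorem (which the paper simply cites, while you spell out the translation of the subset condition, correctly reading it as a statement about connected $Z$), and part (b) subtracts one unit of degree at a vertex, realizes the resulting degree-$(g-1)$ divisor by a $0$-orientation via (a), and biorients an edge leaving that vertex. The only real difference is how the outgoing edge is guaranteed: the paper fixes an \emph{arbitrary} vertex $v$ and deduces $\mt^{O'}_v<\deg v$ directly from the stability inequality $|\md_{G-v}|>g(G-v)-1$, whereas you first select a special vertex $v_0$ by a global degree count; both arguments are valid, the paper's version showing that in fact every vertex can play the role of your $v_0$.
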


\begin{proof}

Part \eqref{orienta} is well known, for example in graph theory as a version of Hakimi's Theorem (for a modern formulation see \cite [Thm 4.8] {ABKS}).

For part \eqref{orientb}, fix a vertex $v$ of $G$. Let $\md':=\md-\underline{v}$ so that $\md' \in \Div^{g-1}(G)$. We   have  $|\underline{d}'_{Z}| \geq g(Z) - 1$ for all   $Z\subset V$. Indeed, if $v \in Z$, we get $|\underline{d}_Z'| = |\underline{d}_Z| - 1 > g(Z) - 2$;  thus   $|\underline{d}_Z'|  \geq g(Z) - 1$. If $v \not \in Z$ we get $|\underline{d}_Z'| = |\underline{d}_Z| > g(Z) - 1$. Thus, by part \eqref{orienta}, we can choose a $0$-orientation $O'$ on $G$ such that  $\underline{d}' = \underline{d}^{O'}$.  Since $\underline{d} \in \Sigma^1(G)$, we have $|\underline{d}_{G  - v}| > g(G - v) - 1$, hence$$
\underline{d}_v =g-  |\md _{G  - v}| < g - g(G - v) + 1\leq g(v) +  \deg v -1 + 1 =g(v) +  \deg v
$$ 
(the ``$\leq$" above is a ``$=$" iff $G-v$ is connected).
On the other hand
$$
\md_v=\md'_v+1=g(v)+\mt^{O'}_v.
$$
Therefore $\mt^{O'}_v< \deg v$, hence $O'$ has an edge, 
  $e$, whose   source  is  $v$. Biorienting $e$ gives a $1$-orientation, $O$, with $\underline{d} = \underline{d}^O$. 
\end{proof}
  Recall that we denote by  $\cO^0(G)$ (resp. $\cO^1(G)$) the set of 
  totally cyclic    (resp.  rooted)   orientations
  on $G$, and by $ \OPt_G$  (resp. $ \OPb_G$) the poset of 
   totally cyclic    (resp.  rooted)   orientations
 on spanning subgraphs of $G$.
On   such sets we defined an equivalence relation 
whose   class-sets are marked     by an overline.
Finally, recall the notation  introduced in \ref{degst}.
\begin{lemma}
\label{degor}
 Let $b=0,1$. Let $G$ be a graph of genus $g$ and $S,T\subset E$.
Consider  the following map
 $$
 \oOPGb\la \Div(G);\quad \quad \overline{O}_S\mapsto \md^{O_S}.
 $$
 
\begin{enumerate}[(a)]
 \item
 \label{degora}
The  map
 induces a bijection between $ \ocO^b(G-S)$ and  $\Sigma^{b}(G-S)$.
\item
 \label{degorb}
If $O_S$ is a $b$-orientation   with
 $\md^{O_S}\in \Sigma^{b}(G-S)$, then $O_S\in \cO^b(G-S)$.
 \end{enumerate}
 \end{lemma}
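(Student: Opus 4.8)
The plan is to establish part \eqref{degorb} first and then deduce the three properties asserted in part \eqref{degora}—well-definedness, injectivity, and surjectivity—from it together with Lemma~\ref{orient} and the characterizations of totally cyclic and rooted orientations in Lemmas~\ref{lm0} and \ref{lmO1}. The one combinatorial observation I would isolate at the outset is that if a divisor $\md$ satisfies $|\md_Z|>g(Z)-1$ for every nonempty $Z$ with $G[Z]$ connected, then the same strict inequality holds for arbitrary nonempty $Z$: decomposing $G[Z]$ into its connected components $Z_1,\dots,Z_k$, additivity of the genus gives $g(Z)=\sum_j g(Z_j)$, while $|\md_Z|=\sum_j|\md_{Z_j}|\ge\sum_j g(Z_j)=g(Z)$ since each $|\md_{Z_j}|\ge g(Z_j)$ by the connected case. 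This bridges the quantification over connected $Z$ in Lemmas~\ref{lm0} and \ref{lmO1} with the quantification over all $Z$ in the definition of stable divisor.

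For part \eqref{degorb}, suppose $O_S$ is a $b$-orientation on $G-S$ with $\md^{O_S}\in\Sigma^b(G-S)$. When $b=0$, stability of $\md^{O_S}$ restricted to each connected component $G_i$ of $G-S$ supplies exactly hypothesis \eqref{lm0c} of Lemma~\ref{lm0} for $G_i$, whence $O_S$ is totally cyclic on every component, and therefore totally cyclic on $G-S$. When $b=1$ (so $G-S$ is connected) stability supplies hypothesis \eqref{lmO1c} of Lemma~\ref{lmO1}, whence $O_S$ is rooted. The edge-free degenerate cases are covered by Convention~\ref{empty}.

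Turning to part \eqref{degora}: the map lands in $\Sigma^b(G-S)$ by running the implications of Lemmas~\ref{lm0} and \ref{lmO1} in the reverse direction—a totally cyclic (resp.\ rooted) orientation satisfies the strict inequalities which, together with the degree count \eqref{degto} applied on each connected component (total degree $g(G-S)-c(G-S)+b$), is precisely the definition of a stable divisor, read off component by component when $b=0$. Injectivity is immediate from the definition of the equivalence $\sim$, under which two classes have equal image if and only if they coincide (compare Remark~\ref{noinjdeg}). For surjectivity, given $\md\in\Sigma^b(G-S)$ I would exhibit a $b$-orientation realizing $\md$ and then invoke part \eqref{degorb}: for $b=0$, Lemma~\ref{orient}\eqref{orienta} applied to each component $G_i$ (on which $\md|_{G_i}$ has degree $g(G_i)-1$ and satisfies $|\md_Z|\ge g(Z)-1$ for all $Z\subset V(G_i)$) produces $0$-orientations whose assembly $O_S$ has $\md^{O_S}=\md$; for $b=1$, Lemma~\ref{orient}\eqref{orientb} directly yields a $1$-orientation $O_S$ on $G-S$ with $\md^{O_S}=\md$. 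In both cases part \eqref{degorb} gives $O_S\in\cO^b(G-S)$.

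The main obstacle is not a single deep step but the careful bookkeeping in reconciling two slightly mismatched conventions: the definition of stable divisor quantifies over \emph{all} subsets $Z$ and treats disconnected graphs component-wise, whereas Lemmas~\ref{lm0} and \ref{lmO1} are phrased for connected induced subgraphs, and Lemma~\ref{orient}\eqref{orienta} is stated for a single connected graph. The reduction from connected $Z$ to arbitrary $Z$ noted above, together with the fact that total cyclicity is \emph{by definition} a component-wise condition matching the component-wise definition of stability, is what makes everything align; checking that the degrees agree—$g(G-S)-c(G-S)+b$ on the orientation side versus the degree prescribed in $\Sigma^b(G-S)$—is the remaining piece of routine verification.
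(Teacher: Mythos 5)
Your argument is correct and takes essentially the same route as the paper: both rest on the divisor characterizations of Lemmas~\ref{lm0} and \ref{lmO1}, the realizability result of Lemma~\ref{orient} (applied component-wise for $b=0$), and the definition of $\sim$ for well-definedness and injectivity, the only difference being that you prove part \eqref{degorb} first and feed it into surjectivity, whereas the paper states \eqref{degorb} as a consequence of \eqref{degora}. One cosmetic slip: stability of $\md^{O_S}$ directly supplies condition \eqref{lm0d} (resp.\ \eqref{lmO1d}), not \eqref{lm0c} (resp.\ \eqref{lmO1c}) as you cite, but since the conditions in each lemma are equivalent this is immaterial.
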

\begin{proof}
    The map is well defined and injective  by Definition~\ref{defequivo}. Its image lies in $ \Sigma^{b}(G-S)$ by  
   Lemma~\ref{lm0} in case $b=0$  and by Lemma~\ref{lmO1} in case $b=1$. Moreover, its image is   the whole of $ \Sigma^{b}(G-S)$ by
Lemma~\ref{orient}. 
This proves  \eqref{degora},  and  \eqref{degorb}  follows from it.
\end{proof}
 
\begin{remark}
\label{pointsCP}
 By \ref{Cthesis} the points of $\PXb$ correspond to equivalence classes of stably    balanced 
line bundles, and
  two such line bundles are equivalent if  
  they are defined on the same $\hX_S$ and  if their restrictions to $\XN_S$ are isomorphic.
 Denote by $\md_S$ a stable divisor of $G-S$ and by  $P_X^{\md_S}\subset \PXb$
the set of equivalence classes of stably balanced line bundles on $\hX_S$ whose restriction to  $\XN_S$
has multidegree $\md_S$.
 By Lemma~\ref{degor}, there exists a unique   
 ${\oO_S}\in \ocO^b(G-S)$ such that $\md_S= \md^{O_S}$, and every stable divisor on $G-S$ is obtained in this way.
 Therefore we   define, for any  ${\oO_S}\in \cO^b(G-S)$ 
\begin{equation}
 \label{POS}
P_X^{O_S}:=P_X^{\md_S}. 
\end{equation}
\end{remark}
 \begin{thm}  
\label{PX}
Let $X$ be a stable curve of genus $g$ and $G$ its dual graph, let $b=0,1$. Then the following is a graded stratification of $\PXb$    by   $\oOPGb$\begin{equation}
 \label{strato}
\PXb=\bigsqcup_{{\oO_S}\in  \oOG^b(G)}P_X^{O_S},
\end{equation}
and we have     natural isomorphisms for every  ${\oO_S}\in  \oOG^b_G$:
\begin{equation}
 \label{isoo}P_X^{O_S}\cong \Pic^{\md^{O_S}}(X^{\nu}_S).
 \end{equation}
 \end{thm}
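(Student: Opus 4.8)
The plan is to first translate the modular description of $\PXb$ from Subsection~\ref{CPsubsec} into the combinatorial language developed earlier, thereby obtaining the partition \eqref{strato} together with the identifications \eqref{isoo}, and then to verify the three axioms of Definition~\ref{stratdef} together with gradedness. First I would set up the partition. By Fact~\ref{Cthesis} every point of $\PXb$ is the equivalence class of a stably balanced line bundle $\hL_S$ on some quasistable curve $\hX_S$, and by Definition~\ref{stbaldef} its restriction $\mdeg_{\XN_S}\hL_S$ is a stable divisor $\md_S\in\Sigma^b(G-S)$. By Lemma~\ref{degor}\eqref{degora} there is a unique class $\oO_S\in\ocO^b(G-S)$ with $\md^{O_S}=\md_S$, so each point lies in exactly one set $P_X^{O_S}$ in the sense of Remark~\ref{pointsCP}; this yields the disjoint decomposition \eqref{strato} indexed by $\oOPGb$.

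Next I would prove \eqref{isoo} and axiom (1). By Remark~\ref{pointsCP} the class of $\hL_S$ in $P_X^{O_S}$ depends only on $\hL_S|_{\XN_S}$ up to isomorphism, while conversely every line bundle in $\Pic^{\md^{O_S}}(\XN_S)$ arises as such a restriction via the surjection of Lemma~\ref{exclm}, each exceptional component being a $\PP^1$ carrying a unique degree-one line bundle. This gives a bijection $P_X^{O_S}\cong\Pic^{\md^{O_S}}(\XN_S)$, which I would upgrade to an isomorphism of varieties using the coarse moduli structure, since fixing the restriction multidegree cuts out a torsor under the generalized Jacobian $\Pic^{0}(\XN_S)$. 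As $\Pic^{0}(\XN_S)$ is a connected algebraic group, each stratum is irreducible and quasi-projective, establishing axiom (1). Moreover $\dim P_X^{O_S}=g(\XN_S)=g(G-S)$, which by Proposition~\ref{poo} is precisely the rank of $\oO_S$ in $\oOPGb$; hence once the decomposition is shown to be a stratification, it is automatically graded.

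The remaining, and principal, task is to match the geometric closure order with the combinatorial order of Proposition~\ref{poo}, that is, to establish axioms (2) and (3) through the equivalence $P_X^{O_S}\subset\overline{P_X^{O_T}}\Longleftrightarrow \oO_S\leq\oO_T$. For the implication $\Leftarrow$, given $T\subset S$ with $(O_T)_{|G-S}\sim O_S$, I would realize a point of $P_X^{O_S}$ as a limit of points of $P_X^{O_T}$ by constructing a one-parameter degeneration that normalizes exactly the extra nodes $S\ssm T$ and inserts the corresponding exceptional components, tracking that the limiting multidegree on $\XN_S$ is the one dictated by $O_S$; the compatibility of orientations ensures the limit lands in the prescribed stratum. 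For the converse, I would use that passing to the closure can only increase the number of normalized nodes, which by the rank computation above forces $g(G-S)\leq g(G-T)$ and $T\subset S$, and then read off from the limiting line bundle that the orientation it induces on $G-S$ is equivalent to the restriction of $O_T$, giving $\oO_S\leq\oO_T$; axiom (2) then follows from (3), since comparability is detected by the poset. I expect this closure analysis to be the main obstacle: the degree $g-1$ case ($b=0$) is the graded stratification of \cite{CV2}, which I would invoke directly, whereas the degree $g$ case ($b=1$) is new and requires the explicit degeneration argument above, controlled by Lemma~\ref{degor} and the functoriality of orientations under the contractions $\hG_S\to G$ introduced in Subsection~\ref{CPsubsec}.
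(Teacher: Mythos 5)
Your setup coincides with the paper's: the partition \eqref{strato} is obtained exactly as you describe, by combining Fact~\ref{Cthesis} with Lemma~\ref{degor} to trade stable divisors $\md_S\in\Sigma^b(G-S)$ for orientation classes $\oO_S\in\ocO^b(G-S)$; the identification \eqref{isoo} and the gradedness via $\dim P_X^{O_S}=g(G-S)$ and Proposition~\ref{poo} are also the paper's argument. The divergence — and the gap — is in the closure analysis, which you correctly flag as the main obstacle but do not actually resolve. The paper does not prove the equivalence $P_X^{O_S}\subset\ov{P_X^{O_T}}\Leftrightarrow \oO_S\leq\oO_T$ by constructing degenerations; it quotes \cite[Prop.~5.1]{Cthesis}, which already asserts (i) that $P_X^{\md_S}\cap\ov{P_X^{\md_T}}\neq\emptyset$ forces $P_X^{\md_S}\subset\ov{P_X^{\md_T}}$, and (ii) that this containment holds if and only if $T\subset S$ and the edges of $S\ssm T$ admit an orientation with $(\md_T)_v=(\md_S)_v+t_v$, where $t_v$ counts edges of $S\ssm T$ with target $v$. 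The only remaining work in the paper is the translation of (ii) into the statement $\oO_S\le\oO_T$, by extending $O_S$ to $G-T$ using that orientation and observing the extension is equivalent to $O_T$.

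Your proposed replacement for this citation is where the argument is incomplete. In the direction $\Leftarrow$, the real content is not that one can normalize the extra nodes in a one-parameter family, but that the limit in the GIT quotient $\PXb$ of line bundles of multidegree $\md^{O_T}$ acquires on $\XN_S$ precisely the multidegree shifted by one unit toward the target of each edge of $S\ssm T$ — i.e., that the discrepancy $\md^{O_T}-\md^{O_S}$ is realized by an orientation of the newly normalized nodes. Your sketch says one should ``track that the limiting multidegree is the one dictated by $O_S$'' but supplies no mechanism for why the limit has that multidegree rather than another semistable one; this is exactly statement (ii) above and is not formal. The converse direction has the same dependency: knowing $T\subset S$ and $g(G-S)\le g(G-T)$ does not by itself let you ``read off'' that the induced orientation on $S\ssm T$ exists and makes $(O_T)_{|G-S}\sim O_S$; again you need (ii). Finally, your claim that axiom (2) of Definition~\ref{stratdef} ``follows from (3), since comparability is detected by the poset'' is not correct as stated — (2) is the separate geometric assertion (i) above, also part of the cited proposition — though it would follow if your converse argument were run from a single point of the intersection rather than from containment. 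In short: either invoke \cite[Prop.~5.1]{Cthesis} as the paper does, or be prepared to reprove it, which is a substantially larger task than the degeneration sketch suggests.
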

\begin{proof}
The case $b=0$ follows   from results of  \cite{CV2}. As
our proof in case $b=1$  also works for $b=0$,   we include it for completeness.

As   in Remark~\ref{pointsCP},
we denote by $P_X^{\md_S}\subset \PXb$
the set of equivalence classes of stably balanced line bundles on $\hX_S$ whose restriction to  $\XN_S$
has degree $\md_S$,
for $\md_S$ a stable divisor of $G-S$.
By    Fact~\ref{Cthesis} we have

\begin{equation}
 \label{stratd}
\PXb=\bigsqcup_{\stackrel{S\subset E}{\md_S\in  \Sigma^{b}(G-S)}}P_X^{\md_S}.
\end{equation}
Now, as noted above, we have 
 $ 
 P_X^{O_S}=P_X^{\md_S}
 $ 
for a unique class 
 ${\oO_S}\in \cO^b(G-S)$ such that $\md_S= \md^{O_S}$. Moreover, by Lemma~\ref{degor} every   $\md_S\in  \Sigma^{b}(G-S)$ is obtained in this way, for every $S\subset E$. Hence  \eqref{stratd} yields \eqref{strato}.
 
Also, we obviously have 
 $ 
P_X^{\md_S} \cong \Pic^{\md_S}(\XN_S)$, 
 from which \eqref{isoo} follows.  
 
Next, recalling Definition~\ref{stratdef}, we prove the following
$$
{P_X^{O_S}}\subset  \ov{P_X^{O_T}} \quad \Leftrightarrow \quad {\oO_S}\leq{\oO_T}.
$$ 
 By  \cite[Prop. 5.1]{Cthesis}
 (revised using graphs)
 we have  ${P_X^{\md_S}}\cap  \ov{P_X^{\md_T}}\neq \emptyset$ if and only if 
${P_X^{\md_S}}\subset  \ov{P_X^{\md_T}}$.
Moreover, ${P_X^{\md_S}}\subset  \ov{P_X^{\md_T}}$ 
 if and only if
$T\subset S$ and the edges in $S\ssm T$ can be oriented so that, denoting by
$t_v$ the number of edges  in $S\ssm T$ with target a vertex $v$, we have
 $(\md_{T})_v= (\md_{S})_v+t_v. $ 
 
Assume ${P_X^{\md_S}}\subset  \ov{P_X^{\md_T}}$ and denote by $O'_T$ the orientation on $G-T$ which extends $O_S$ to $S\ssm T$ by the orientation we   just defined (where $\oO_S\in \ocO(G-S)$ is such that $\md^{O_S}=\md_{S}$, by the previous part).
Of course  $O_S\leq O'_T$ and, as     $\md^{O_T}=\md_{T}$   for some $\oO_T \in \ocO(G-T)$,  we   have
 $\md^{O_T}=\md_{T}=\md^{O'_T},$ 
hence $O'_T\sim O_T$. We conclude that ${\oO_S}\leq{\oO_T}$. The converse is obvious.

Finally,  we need to show that the stratification \eqref{stratd} is graded. Recall that
the generalized Jacobian of $\XN_S$ is an irreducible variety of dimension $g(G-S)$, hence so is $ \Pic^{\md_S}(\XN_S)$, hence so is $P_X^{O_S}$. By Proposition~\ref{poo}, the map
${\oO_S}\mapsto g(G-S)$  is a rank  $\oOPGb$, hence we are done.
 \end{proof}

\subsection{Specialization of polarized curves}
\label{rigor}
We shall be interested in  (flat, projective) families of curves over a one-dimensional nonsingular base, specializing to a given curve $X$.
Up to restricting the base    we can assume that 
away from $X$ the family is topologically trivial, i.e. that every fiber different from  $X$ has   the same dual graph  of some fixed   curve $Y$.
We shall   refer to  such a family  as  a {\it specialization from $Y$ to $X$}.
Since $X$ has only nodes as singularities, the same holds for $Y$. 
 Suppose our curves   $X$ and $Y$   are ``polarized", i.e.    endowed with  a line bundle, 
$L\in \Pic(X)$ and  $M\in \Pic(Y)$.
We say that  $(Y,M)$ specializes  to $(X,L)$ 
if there is a specialization of $Y$ to $X$  under which $M$ specializes to $L$. Let us be more precise.

The family under which $Y$ specializes to $X$ is a projective morphism
$f:\cX\to B$ where $B$ is a smooth, connected, one-dimensional variety with a   point $b_0$ such that $f^{-1}(b_0)\cong X$, 
and   the restriction of $f$ away from $b_0$ is locally trivial, moreover   $f^{-1}(b)\cong Y$  for some $b\neq b_0$.
As an \'etale base change of $f$ determines again a specialization of $Y$ to $X$ 
we are free to replace $f$ by an \'etale base change.
For the polarized version, to say that $M$ specializes to $L$ means   that $\cX$
is endowed with a line bundle whose restriction to $Y$ is $M$ and whose restriction to $X$ is $L$.

\begin{prop}
\label{hbk1}
 Let $X$ and $Y$ be two nodal curves and   $G$ and $H$   their respective dual graphs.
 Let $L\in \Pic(X)$ and  $M\in \Pic(Y)$ such that
  $(Y, M)$ specializes to $(X,L)$. Then    there exists a contraction
$\gamma : G\to H$ 
 such that
$$
\gamma_*\mdeg (L)= \mdeg(M).
$$
 \end{prop}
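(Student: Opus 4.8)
The plan is to extract the contraction $\gamma$ from the geometry of the specializing family and then deduce the multidegree identity from conservation of intersection numbers in flat families. First I would set up the family: by definition there is a projective morphism $f:\cX\to B$ with $\cX$ carrying a line bundle $\cL$ such that $\cL_{|X}=L$ and $\cL_{|Y}=M$, where $X=f^{-1}(b_0)$ and $Y=f^{-1}(b)$ for some $b\neq b_0$. After an étale base change (which we are free to perform) I would arrange that the finitely many irreducible components of the fibers over $B\ssm\{b_0\}$ form trivial families, so that to each component $D_u$ ($u\in V(H)$) of $Y$ there corresponds a well-defined irreducible surface $\cX_u\subset\cX$, namely the closure of the locus swept out by $D_u$. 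Since $f$ is flat with one-dimensional fibers, $\cX$ has pure dimension two and every irreducible component dominates $B$; thus the $\cX_u$ are exactly the irreducible components of $\cX$.

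Next I would construct $\gamma:G\to H$. The key local observation is that at the generic point $\eta_v$ of each component $C_v$ of $X$ the total space $\cX$ is regular: there $X$ is smooth of dimension one, hence $\cO_{\cX,\eta_v}/(t)$ is a regular one-dimensional ring (with $t$ a local equation of the special fiber), and flatness makes $t$ a nonzerodivisor, so $\cO_{\cX,\eta_v}$ is regular of dimension two. Consequently $\eta_v$ lies on a unique component $\cX_u$ of $\cX$, and I define $\gamma_V(v)=u$. I would let $S\subset E(G)$ be the set of nodes of $X$ that are smoothed in the family and send every remaining (persisting) node to the corresponding node of $Y$; by the local triviality away from $b_0$ the persisting nodes biject with $E(H)$ compatibly with endpoints. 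One then checks that $H\cong G/S$ as weighted graphs: connectivity of $D_u$ forces $\gamma^{-1}(u)$ to be connected, and since $D_u$ is obtained by smoothing the internal nodes of $\bigcup_{\gamma_V(v)=u}C_v$, its geometric genus equals the arithmetic genus of that subcurve, which is $g(\gamma^{-1}(u))$; this matches the weight rule $w_{/S}(u)=g(\gamma^{-1}(u))$. Thus $\gamma$ is a genuine contraction.

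For the multidegree identity I would fix $u\in V(H)$ and work on the surface $\cX_u$, which is flat over $B$ with general fibre $D_u$. Its special fibre is the one-cycle $[\cX_u\cap X]$, and by the regularity computation above the multiplicity of each $C_v\subset\cX_u$ in this cycle is one; hence $[\cX_u\cap X]=\sum_{\gamma_V(v)=u}[C_v]$. The general and special fibres of $\cX_u\to B$ are algebraically equivalent one-cycles (fibres over the points $b,b_0$ of the smooth curve $B$), so intersecting with the line bundle $\cL$ yields the same number:
\[
\deg_{D_u}M=\cL\cdot D_u=\cL\cdot[\cX_u\cap X]=\sum_{\gamma_V(v)=u}\cL\cdot C_v=\sum_{\gamma_V(v)=u}\deg_{C_v}L.
\]
By definition of $\gamma_*$ the right-hand side is $(\gamma_*\mdeg(L))_u$ and the left-hand side is $\mdeg(M)_u$; as $u$ was arbitrary this gives $\gamma_*\mdeg(L)=\mdeg(M)$.

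I expect the main obstacle to be the clean identification of $\gamma_V$ together with the multiplicity-one statement: one must argue that each component $C_v$ of $X$ lies on a single irreducible component of $\cX$ and enters the special fibre of that surface with multiplicity exactly one. Both follow from the regularity of $\cX$ at the generic points of the $C_v$, which in turn rests on $X$ being reduced; isolating and using this regularity is the crux, after which the degree identity is a formal consequence of the invariance of intersection numbers under algebraic equivalence.
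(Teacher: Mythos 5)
Your proof is correct and arrives at the right identity, but it is organized rather differently from the paper's. The paper outsources essentially all of the geometric content to \cite[Thm 4.7 (2)]{CHBK}: it defines $S_0$ as the set of nodes of $X$ that are not specializations of nodes of $Y$, quotes that $G/S_0=H$ and that each component $D_w$ of $Y$ specializes (as a subcurve) to $\cup_{\gamma(v)=w}C_v$, and then concludes in one line because $M_{|D_w}$ specializes to the restriction of $L$ to that subcurve, so their degrees agree. You instead rebuild this input directly from the total space of the family: the irreducible components $\cX_u$ of $\cX$, the regularity of $\cX$ at the generic points $\eta_v$ of the components of $X$ (which simultaneously gives that each $C_v$ lies on a unique $\cX_u$, hence a well-defined $\gamma_V$, and the multiplicity-one statement for the special fibre of $\cX_u\to B$), and then the degree identity via constancy of the degree of $\cL$ on the fibres of the flat family $\cX_u\to B$. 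This buys a self-contained argument at the price of re-proving a quoted result; your intersection-theoretic formulation of degree conservation is equivalent to the paper's ``$M_{|D_w}$ specializes to $L_{|\cup C_v}$''. One small correction: at the generic point $\eta_v$ of a component of the \emph{reduced} curve $X$ the local ring $\cO_{X,\eta_v}$ is a field (not one-dimensional), so $\cO_{\cX,\eta_v}$ is a discrete valuation ring, regular of dimension one rather than two; since a DVR is exactly what is needed both for uniqueness of the component of $\cX$ through $\eta_v$ and for multiplicity one, this off-by-one slip does not affect the argument.
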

  
In the opposite direction, we have the following.
\begin{prop}
\label{hbk2}
Let  $\gamma : G\to H$
be  a contraction between two graphs.
 Then for any curve $X$ dual to $G$  and for any $L\in \Pic(X)$
 there exist a curve $Y$ dual to $H$  and a line bundle $M\in \Pic(Y)$
 such that $\gamma_*\mdeg (L)= \mdeg(M)$ and such that
 $(Y, M)$ specializes to $(X,L)$.
\end{prop}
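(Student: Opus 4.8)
The plan is to realize the contraction $\gamma$ geometrically as the simultaneous smoothing of a prescribed set of nodes of $X$, and then to propagate $L$ across the resulting one-parameter family. Write $H = G/S_0$ with $S_0 \subset E(G)$, and recall that each edge in $S_0$ is a node of $X$. First I would produce a specialization $f : \cX \to B$ whose general fiber is obtained from $X$ by smoothing exactly the nodes in $S_0$, keeping all nodes in $E(H) = E(G)\ssm S_0$. Such a family exists by the deformation theory of nodal curves: the nodes can be smoothed independently, and a one-parameter smoothing of the $S_0$-nodes (each taken to first order, locally $xy = t$) yields a flat projective family over a smooth one-dimensional base $B$, locally trivial away from the special point $b_0$, as required by the notion of specialization of Subsection~\ref{rigor}. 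Smoothing a node contracts the corresponding edge of the dual graph and folds its contribution into the weight of the resulting vertex, so the general fiber is dual to $G/S_0 = H$; in particular $Y$ is dual to $H$.

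The second step is to extend $L$ over the family. I would first represent $L$ by a Cartier divisor $D = \sum_i n_i p_i$ supported on smooth points $p_i \in X_{\smooth}$, each lying on a component $C_{v(i)}$ with $v(i) \in V(G)$ (this is always possible, since $X_{\smooth}$ is dense and the generalized Jacobian of each component is generated by its smooth points). Each $p_i$ lies in the smooth locus $U$ of $f$, which is regular because $B$ is; hence, after an \'etale base change — allowed by Subsection~\ref{rigor} — the point $p_i$ extends to a section $\sigma_i : B \to U \subset \cX$ with $\sigma_i(b_0) = p_i$. Each $\sigma_i(B)$ is then a Cartier divisor of $\cX$ contained in $\cX_{\smooth}$. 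Setting $\cD = \sum_i n_i\,\sigma_i(B)$ and $\cL = \cO_{\cX}(\cD)$ gives a genuine line bundle on $\cX$ with $\cL|_X = \cO_X(D) = L$, and I define $M := \cL|_Y$; by construction $(Y,M)$ specializes to $(X,L)$.

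Finally I would check the multidegree. The section $\sigma_i$ meets $Y$ in the single smooth point $\sigma_i(b)$ on the component of $Y$ corresponding to $\gamma_V(v(i)) \in V(H)$, so for every $v \in V(H)$,
$$
(\mdeg M)_v = \sum_{i\,:\,\gamma_V(v(i)) = v} n_i = \sum_{z \in \gamma_V^{-1}(v)} \deg_{C_z} L = \sum_{z \in \gamma_V^{-1}(v)} (\mdeg L)_z = (\gamma_*\mdeg L)_v,
$$
whence $\mdeg(M) = \gamma_*\mdeg(L)$, which completes the argument.

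The main obstacle I expect lies in the first step: producing the specialization with the prescribed smoothing behaviour while controlling its dual graph. The existence of a one-parameter family smoothing precisely the nodes in $S_0$ is standard, but some care is needed to guarantee that the family is locally trivial away from $b_0$ (so that the definition of specialization applies) and that the total space is regular along the sections used to extend $L$ (so that each $\sigma_i(B)$ is genuinely Cartier, even though $\cX$ is singular along the persisting nodes). Once these geometric points are secured, the extension of $L$ and the multidegree computation are routine, the latter being essentially the definition of $\gamma_*$.
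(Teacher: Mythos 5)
Your argument is correct, and it splits naturally into the same two steps as the paper's proof, but the second step is handled by a genuinely different method. For the construction of the family of curves, the paper proceeds more explicitly than you do: it normalizes $X$ at $T=E(H)$, observes that the connected components $Z_w$ of $\XN_T$ are indexed by $V(H)$ and have arithmetic genus equal to the weight of $w$, smooths each $Z_w$ to a smooth curve $W_w$, and reglues along pairs of sections extending the branch points of the $T$-nodes; this is the same geometric content as your ``smooth exactly the nodes in $S_0$, keep those in $T$,'' and it is also how the paper secures the local triviality away from $b_0$ that you flag as the delicate point. Where you genuinely diverge is in extending $L$: the paper invokes the relative Picard scheme $\Pic_{\cX/B}\to B$ and argues that the component $\Pic^{\mdeg L}(X)$ of the special fiber must be the specialization of the component $\Pic^{\gamma_*\mdeg L}(Y)$ of the general fiber (identified by the multidegree computation of Proposition~\ref{hbk1}), so that every $L$ lifts to some $M$; you instead write $L=\cO_X(D)$ with $D$ supported on $X_{\smooth}$, extend the points of $D$ to sections through the smooth locus of $f$ after an \'etale base change, and set $\cL=\cO_{\cX}(\cD)$. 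Your route is more constructive and avoids any appeal to properties of the relative Picard scheme (in particular its possible non-separatedness, which the paper has to comment on), at the cost of the standard but nontrivial facts that line bundles on a reduced projective curve are represented by divisors in the smooth locus and that the sections $\sigma_i(B)$, being closed subschemes of the regular surface $\cX_{\smooth}$ disjoint from the persisting nodes, are Cartier on all of $\cX$; the paper's route gets the existence of $M$ for all $L$ at once from a single component-matching argument. Both proofs are complete; your multidegree computation agrees with the one the paper carries out inside the proof of Proposition~\ref{hbk1}.
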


\begin{proof}
We prove Propositions~\ref{hbk1} and \ref{hbk2} together as their proofs are closely related.
They extend
 \cite[Thm 4.7   (2)]{CHBK}
  to polarized nodal  curves.
    
To prove Proposition ~\ref{hbk1},    assume   $(Y, M)$ specializes to $(X,L)$. Under such a specialization every node of $Y$ specializes to a node of $X$
  and different nodes specialize to different nodes. Hence we   partition $E(G)=S_0\sqcup T$ so that
  $S_0$ is the set of nodes of $X$ which are not specializations of nodes of $Y$.
  We let $\gamma:G\to G/S_0$, and, arguing  as for  \cite[Thm 4.7]{CHBK}, we have   $G/S_0=H$.

For any vertex $w\in V(H)$ we write $D_w\subset Y$ for the irreducible component corresponding to $w$.
As shown in loc. cit., the specialization from $Y$ to $X$ induces a specialization  of 
 $ 
D_w$ to $ \cup_{ \gamma(v)=w}C_v$  (as a   subcurve of $X$).
Now, $M$ specializes to $L$ and hence $M_{|D_w}$ specializes to the restriction of $L$ to
$\cup_{ \gamma(v)=w}C_v$.
Therefore
$$
\mdeg(M)_w=\deg_{D_w} M=\deg L_{|\cup_{ \gamma(v)=w}C_v}=\sum_{\gamma(v)=w} \mdeg(L)_v =\gamma_*\mdeg(L)_w.
$$
  This proves
 Proposition~\ref{hbk1}.
 
 For Proposition~\ref{hbk2},
  let $\gamma:G\to G/S_0=H$ be a contraction, for some $S_0\in E(G)$; write $E(G)=S_0\sqcup T$ 
so that $T$ is identified with $E(H)$.
 Let $X$ be a curve dual to $G$ and let $\XN_T$ be its normalization at   $T$, so that $G-T$ is the dual graph
 of $\XN_T$. The curve $\XN_T$ is endowed with $|T|$ pairs of marked smooth points, namely the points lying over the nodes in $T$.
 Observe that the connected components of  $\XN_T$ are in bijection with the connected components of $H-T$,
 and hence with the vertices of $H$. We can therefore decompose   $\XN_T$ as follows
 $ 
 \XN_T=\sqcup_{w\in V(H)}Z_w
 $  
with $Z_w$  a connected nodal curve whose   genus, $g(Z_w)$, is equal to the weight of $w$ as a vertex in $H$.
  Therefore we can find a family of smooth    curves of genus $g(Z_w)$ specializing to $Z_w$,
  i.e. we have a smooth curve, $W_w$, specializing to $Z_w$. Considering the union for $w\in V(H)$ we get a specialization
  of $\sqcup_{w\in V(H)}W_w$ to $\sqcup_{w\in V(H)}Z_w=  \XN_T$.

Now, up to \'etale cover,  such a specialization
 can be  endowed with
 $|T|$
 pairs of sections specializing to the above $|T|$ pairs of   marked points of $\XN_T$.
 By gluing together each such pair of sections we get  a specialization   to our $X$ from a curve, $Y$, whose dual graph
 is $H$.
 
Clearly,    the contraction $\gamma:G\to H$ corresponds to this specialization from $Y$ to $X$.
  
 Now, using the notation of    Subsection~\ref{rigor}, let $f:\cX\to B$ be a family under which $Y$ specializes to $X$,
 and consider its relative Picard scheme, $\Pic_{\cX/B}\to B$. Its fiber over $b_0$ is $\Pic(X)$ and its fiber over $b$ is $\Pic(Y)$.
 Write $\md=\deg{L}$; 
  we claim that, in the relative Picard scheme, $\Pic^{ \md}(X)$ is the specialization of $\Pic^{\gamma_*\md}(Y)$.
  Indeed,   $\Pic^{ \md}(X)$ must be the specialization of some   connected component of $\Pic(Y)$
    (even if  this Picard scheme were not separated,   every connected component of its fiber over $b_0$ is the specialization of some connected component of the general fiber),
  and this component is necessarily
  $\Pic^{\gamma_*\md}(Y)$ by
  the same computation we used to prove Proposition~\ref{hbk1}.
  
Now, as $\Pic^{ \md}(X)$ 
 is the specialization of $\Pic^{\gamma_*\md}(Y)$, any $L\in \Pic^{ \md}(X)$ is   the specialization of  some $M\in  \Pic^{\gamma_*\md}(Y)$, and we are done.
 \end{proof}

 \subsection{Compactified universal Jacobians}
 \label{unisec}
We fix $d\in \Z$.
In this paper we are    interested in   $d=g$ and $d=g-1$, so we shall restrict to these two cases even though some of  the preliminary results quoted in  this subsection hold more generally for every $d$. We also assume $b=0,1$ so that $d=g-1+b$.

We let  $\Mgb$ be the moduli space of stable curves of genus $g\geq 2$, an   irreducible  projective variety. 
 \begin{fact}\label{strathbk}
The following is a graded stratification of $\Mgb$   by $\Sg$:
$$
\Mgb= \bigsqcup_{G\in \Sg} M_G 
$$
 where $M_G$  parametrises   curves having $G$ as dual graph.
 \end{fact}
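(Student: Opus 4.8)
The plan is to verify the three conditions of Definition~\ref{stratdef} together with gradedness, the essential geometric input being the classical deformation theory of nodal curves packaged, at the curve level, in the specialization--contraction correspondence of Propositions~\ref{hbk1} and \ref{hbk2}. First I would record that the partition is well defined: every stable curve $X$ determines its dual graph $G(X)\in\Sg$ up to isomorphism, and $M_G$ is exactly the locus of $[X]$ with $G(X)\cong G$; conversely every $G\in\Sg$ is realized, since choosing for each $v\in V(G)$ a smooth curve of genus $w(v)$ with $\deg v$ marked points (which exists and is stable because $2w(v)-2+\deg v>0$ for connected $G$ of genus $g\geq 2$) and gluing these marked points in pairs along the edges of $G$ produces a stable curve dual to $G$. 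This same construction yields condition (1): $M_G$ is the image of $\bigl(\prod_{v\in V(G)}M_{w(v),\deg v}\bigr)/\Aut(G)$ under the gluing morphism, hence irreducible (each $M_{w(v),\deg v}$ is irreducible and $\Aut(G)$ is finite), and it is locally closed in $\Mgb$, hence quasi-projective.

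For the closure relations (conditions (2) and (3)) I would establish the chain of equivalences $G\leq G'\iff M_G\subseteq\overline{M_{G'}}\iff M_G\cap\overline{M_{G'}}\neq\emptyset$. The implication $G\leq G'\Rightarrow M_G\subseteq\overline{M_{G'}}$ is the curve-level content of Proposition~\ref{hbk2}: if $G'=G/S$, then for every $X$ dual to $G$ there is a curve $Y$ dual to $G'$ specializing to $X$, whence $[X]\in\overline{M_{G'}}$, and this holds for all $[X]\in M_G$. The implication $M_G\cap\overline{M_{G'}}\neq\emptyset\Rightarrow G\leq G'$ is the curve-level content of Proposition~\ref{hbk1}: by the setup of Subsection~\ref{rigor}, a point $[X]\in M_G\cap\overline{M_{G'}}$ is the limit of a family whose general member is dual to $G'$, and such a specialization induces a contraction $G\to G'$, so that $G\leq G'$ by \eqref{pograph}. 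The middle implication is trivial since $M_G\neq\emptyset$. Conditions (2) and (3) now follow: if $M_G\cap\overline{M_{G'}}\neq\emptyset$ then $G\leq G'$ and hence $M_G\subseteq\overline{M_{G'}}$, giving (2); and $M_G\subseteq\overline{M_{G'}}\iff G\leq G'$ (the forward direction using $M_G\neq\emptyset$) gives (3).

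Finally, for gradedness I would compute $\dim M_G$ from the parametrization above. Since $\dim M_{w(v),\deg v}=3w(v)-3+\deg v$ and passing to the finite quotient by $\Aut(G)$ preserves dimension, summing over $v$ and using $\sum_{v}\deg v=2|E(G)|$ together with $\sum_v w(v)=g+|V(G)|-|E(G)|-1$ gives
$$
\dim M_G=\sum_{v\in V(G)}\bigl(3w(v)-3+\deg v\bigr)=3g-3-|E(G)|.
$$
By Proposition~\ref{rkSg} this is exactly the value of the rank function $G\mapsto 3g-3-|E(G)|$ on $\Sg$, so $G\mapsto\dim M_G$ is a rank and the stratification is graded.

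The main obstacle is the deformation-theoretic input underlying the closure relations: that smoothing a set of nodes of $X$ is modelled by the weighted edge-contraction $G\to G/S$ (consistent with $g(G/S)=g(G)$, Remark~\ref{ftrivrk}), and conversely that every one-parameter specialization of stable curves induces such a contraction on dual graphs, together with the translation between membership in $\overline{M_{G'}}$ and the existence of a specializing family. Once this is granted via the curve-level content of Propositions~\ref{hbk1} and \ref{hbk2} and the framework of Subsection~\ref{rigor}, the remaining steps are the elementary bookkeeping carried out above.
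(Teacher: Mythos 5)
Your proof is correct and is essentially the argument the paper takes for granted: the paper states this as a \emph{Fact} without proof (it is the classical Deligne--Mumford stratification by dual graphs), the only justification offered being the one-line remark that $G\mapsto\dim M_G=3g-3-|E(G)|$ is the rank of Proposition~\ref{rkSg}, and the ingredients you assemble --- the product description $M_G=\bigl(\prod_v M_{w(v),\deg v}\bigr)/\Aut(G)$ (which the paper itself quotes from \cite{ACP} later, in \eqref{pi}) and the curve-level specialization/contraction correspondence of Propositions~\ref{hbk1} and \ref{hbk2} (i.e.\ \cite[Thm 4.7]{CHBK}) for the closure relations --- are exactly the ones the paper relies on elsewhere. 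Your dimension count reproduces the paper's stated rank, and your use of Propositions~\ref{hbk1} and \ref{hbk2} is not circular since they precede the Fact in the text and do not depend on it.
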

Indeed,  the map $G\mapsto \dim M_G$ is the  rank on $\Sg$   defined in Proposition~\ref{rkSg}.
Now, from  \cite{Cthesis} we introduce, for every $d\in \Z$,   the
 compactified universal degree-$d$ Jacobian 
$$
\psi_{g,d}:\Pdgb \la \Mgb.
$$
We sometimes write $\psi=\psi_{g,d}$ for simplicity.
Recall   that $\Pdgb$ is the GIT quotient of a Hilbert scheme, and that $\psi$ is a projective morphism whose fiber over $X\in \Mgb$
is isomorphic to $\PXd/\Aut(X)$.  
Set
$$ 
P_G^{d}:=\psi_{g,d}^{-1}(M_G).
$$
 Pick a stable curve $X\in M_G$.
Then we have a   canonical  map
 \begin{equation}
\label{muX}
 \mu_X:\PXd\to P_G^d.
   \end{equation}

\begin{cor}
\label{corstr}
 Let $G,H\in \Sg$. Then
 $$
 P_G^{d} \subset \ov{ P_H^{d}}\quad \quad \text{if and only if} \quad \quad H\geq G.
 $$
\end{cor}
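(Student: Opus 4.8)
The statement is an equivalence, and I would treat the two implications quite differently: the direction $P_G^d\subset\ov{P_H^d}\Rightarrow H\geq G$ is formal and uses only that $\psi$ is proper together with the already-established stratification of $\Mgb$, while the reverse direction carries the geometric content and rests on Proposition~\ref{hbk2}. For the easy direction, the plan is as follows. Since $P_G^d=\psi^{-1}(M_G)$ surjects onto the nonempty stratum $M_G$, it is nonempty; pick $p\in P_G^d$. Then $\psi(p)\in M_G$, and by continuity $\psi(p)\in\psi(\ov{P_H^d})\subset\ov{\psi(P_H^d)}=\ov{M_H}$. Hence $M_G\cap\ov{M_H}\neq\emptyset$, and Fact~\ref{strathbk} together with Definition~\ref{stratdef}(2)--(3) upgrades this to $M_G\subset\ov{M_H}$ and $G\leq H$, i.e. $H\geq G$.

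For the substantive direction I would assume $H\geq G$ and fix a contraction $\gamma:G\to H$. Because $\ov{P_H^d}$ is closed, it suffices to check that every point $p\in P_G^d$ lies in $\ov{P_H^d}$. Let $X\in M_G$ be determined by $\psi(p)=[X]$. Using the modular description of Fact~\ref{Cthesis}, I would choose a representative and write $p=\mu_X([\hL_S])$ for a stably balanced line bundle $\hL_S$ on a quasistable curve $\hX_S$ with stable model $X$ and dual graph $\hG_S$. Let $\delta:\hG_S\to G$ be the contraction of Subsection~\ref{CPsubsec} collapsing one edge at each exceptional vertex, and form the composite contraction $\gamma\circ\delta:\hG_S\to H$. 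Applying Proposition~\ref{hbk2} to the nodal curve $\hX_S$, the line bundle $\hL_S$, and $\gamma\circ\delta$ yields a curve $Y$ dual to $H$ and a line bundle $M\in\Pic(Y)$ with $(\gamma\delta)_*\mdeg(\hL_S)=\mdeg(M)$ such that $(Y,M)$ specializes to $(\hX_S,\hL_S)$. The reason to pass through the quasistable representative $\hX_S$ and the composite contraction, rather than applying Proposition~\ref{hbk2} directly to $X$, is that this treats all points of $P_G^d$ uniformly, including the boundary points of the compactified Jacobian and the case where $G$ has bridges (where, for $b=0$, there are no stable divisors on $G$ itself).

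The plan is then to read the closure relation off this specialization. The specialization is carried by a family $f:\cX\to B$ over a smooth one-dimensional base with a line bundle $\cL$, whose special fiber is $(\hX_S,\hL_S)$ and whose general fibers are isomorphic to $(Y,M)$ with $Y\in M_H$. Since $\psi:\Pdgb\to\Mgb$ is projective, hence proper, the family together with $\cL$ induces a morphism $\beta:B\to\Pdgb$; for $b\neq b_0$ the value $\beta(b)$ lies in $P_H^d=\psi^{-1}(M_H)$ because the corresponding fiber is dual to $H$, and the total degree is preserved, $\deg M=|\mdeg(M)|=|\mdeg(\hL_S)|=d$. Taking closures, $p=\beta(b_0)\in\ov{\beta(B\smallsetminus\{b_0\})}\subset\ov{P_H^d}$, which finishes the proof.

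The hard part will be the identification $\beta(b_0)=p$. One must verify that the limit produced by the valuative criterion, equivalently by the GIT construction of $\Pdgb$ in \cite{Cthesis}, is exactly the point represented by $(\hX_S,\hL_S)$ and not some other stably balanced line bundle on a different quasistable model of $X$. This is precisely where the stable balancedness of $\hL_S$ enters, ensuring that $(\hX_S,\hL_S)$ is already the stably balanced representative of its own limit, so that $\beta(b_0)=\mu_X([\hL_S])=p$; it is also where one needs the compatibility of the modular description of $\PXd$ (Fact~\ref{Cthesis}, Theorem~\ref{PX}) with specializations. Everything else is either formal or a direct consequence of Proposition~\ref{hbk2} and the properness of $\psi$.
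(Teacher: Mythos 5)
Your ``only if'' direction coincides with the paper's proof: the paper disposes of the whole corollary in one line, by combining Fact~\ref{strathbk} with the projectivity of $\psi$, and your computation $M_G=\psi(P_G^d)\subset\psi(\ov{P_H^d})\subset\ov{M_H}$ is exactly what that line encodes for this implication.

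Your ``if'' direction, however, has a genuine gap, and it is not the step you flagged as hard. The problem is not identifying $\beta(b_0)$ with $p$; it is that $\beta$ is not defined at the general point of $B$ at all. Proposition~\ref{hbk2}, applied to $\gamma\circ\delta:\hG_S\to H$, produces $M\in\Pic(Y)$ with $\mdeg(M)=(\gamma\delta)_*\mdeg(\hL_S)$, but this multidegree need not be a stable divisor on $H$, so $(Y,M)$ need not be stably balanced and Remark~\ref{modprop} yields no moduli morphism at $b\neq b_0$: the pair $(Y,M)$ simply does not represent a point of $P_H^d$. For instance, take $\gamma$ trivial, $G=H$ the graph with two vertices of weight $1$ joined by two edges ($g=3$, $b=0$), and $S=E(G)$; then $\widehat{\md^{O_S}}=(0,0,1,1)$, and the choice of $\delta$ contracting both exceptional vertices toward the same vertex gives $\delta_*\widehat{\md^{O_S}}=(2,0)$, which violates the inequality of Definition~\ref{stabledef} on the other vertex. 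Worse, if $b=0$ and $H$ contains a bridge, then $\Sigma^{0}(H)=\emptyset$ (combine Lemma~\ref{degor} with Fact~\ref{F1}), so \emph{no} line bundle on $Y$ itself is stably balanced and no choice of $\delta$ can repair the argument: every point of $\psi^{-1}([Y])$ is then represented on a proper quasistable model of $Y$.

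The correct move is to contract one step less: use $\hgamma:\hG_S\to\hH_R$ with $R=\gamma_*S$, for which $\hgamma_*\widehat{\md^{O_S}}=\widehat{\md^{\gamma_*O_S}}$ is again stably balanced --- this is precisely equation~\eqref{hat} in the paper's proof of Proposition~\ref{prop1}. Since $\hY_R$ has stable model $Y\in M_H$, the resulting nearby points still lie in $P_H^d=\psi^{-1}(M_H)$, which is all the closure statement requires (you do not need to land in the dense stratum of $\psi^{-1}([Y])$). Alternatively, this implication can be had without any line bundles: $\psi$ is proper and surjective with fibers of pure dimension $g$ over the normal variety $\Mgb$, hence open, so $\ov{\psi^{-1}(M_H)}=\psi^{-1}(\ov{M_H})\supset\psi^{-1}(M_G)$; this purely formal route is presumably what the paper's appeal to projectivity and Fact~\ref{strathbk} intends.
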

\begin{proof}
 It suffices to use  Fact~\ref{strathbk}  and  that $\psi:\Pdgb \to \Mgb$ is   projective.
\end{proof} 
In the next remark we recall the basic moduli properties  of $\Pdgb$.
  \begin{remark}
\label{modprop}

Let $f:\cX\to B$ be a family of quasistable curves of genus $g$ and let $\cL$ be a line bundle on $\cX$ whose restriction, $L_b$,
to every fiber over  $b\in B$  is   stably balanced  of degree $d$
  (in the sense of Definition~\ref{stbaldef}). Then there is a {\it moduli morphism},
$\mu_{\cL}:B \to \Pdgb$ such that the image of $b\in B$ is the equivalence class of $L_b$.

Consider the   case of a fixed curve  rather than a family. So   $B=\{b\}$   and $\cX=\hX_S$ is a fixed quasistable curve. Let $L, L'\in \Pic(\hX_S)$
be stably balanced. If the restriction of $L$ and $L'$ away from the exceptional components 
 are isomorphic (i.e. if  $L_{\XN_S}\cong L'_{\XN_S}$)  then $\mu_L(b)=\mu_{L'}(b)$.
 \end{remark}
Fix $G$ and $S\subset E(G)$.
Let $f:\cX\to B$ be a family of stable curves all having  dual graph identified with $G$, hence 
  $S$ can be identified with   $|S|$ (set-theoretic) sections    of $f$ corresponding to the nodes in $S$.
Denote by $f_S:\cX_S\to B$ the desingularization at these   sections, so that every fiber of $f_S$ has dual graph $G-S$.
For every $e\in S$ we have a pair of sections  $(\sigma_{h^+_e}, \sigma_{h^-_e})$ of $f_S$
(where $h^+_e, h^-_e$ are the half-edges of $e$). We glue to $\cX_S$ 
a copy of $\PP^1\times B$  by identifying its $0$ and   $\infty$ section to  $ \sigma_{h^+_e}$ and  $\sigma_{h^-_e}$.
By repeating  this for every $e\in S$
we obtain a
family of quasistable curves  $\widehat{f_S}:\hat{\cX}_S\to B$ with dual graph $\hG_S$.

Let now $\md_S$ be a   divisor on $G-S$,  denote by  $\Pic^{\md_S}_{{f_S}}$ the corresponding connected component
of the Picard scheme $\Pic_{f_S}$. Similarly, denote by $ 
\Pic^{\widehat{\md_S}}_{\widehat{f_S}} 
$ 
the connected component of $\Pic_{\widehat{f_S}}$ corresponding to  $\widehat{\md_S}\in \Div(\hG_S)$.

Now, using the notation in Lemma~\ref{exclm}, we have
\begin{lemma}
\label{exclm2} Let $f:\cX\to B$ be as above.
Let $b=0,1$ and 
  $\md_S\in \Sigma^b(G-S)$.
Then there exist a moduli morphism
$ 
\mu_{\widehat{\md_S}}:\Pic^{\widehat{\md_S}}_{\widehat{f_S}}\to \Pdgb
$ and a morphism $\mu_{\md_S}: \Pic^{\md_S}_{{f_S}}
\to P_G^d 
$
such that
$$
\mu_{\widehat{\md_S}}:\Pic^{\widehat{\md_S}}_{\widehat{f_S}} \stackrel{\varphi}{\la}  \Pic^{\md_S}_{{f_S}}
 \stackrel{\mu_{\md_S}}
{\la} P_G^d,
$$
where $\varphi$ is given by restriction away from the exceptional components.
\end{lemma}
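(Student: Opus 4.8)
The plan is to first produce the upper moduli morphism $\mu_{\widehat{\md_S}}$ from Remark~\ref{modprop}, and then obtain $\mu_{\md_S}$ by descending $\mu_{\widehat{\md_S}}$ along $\varphi$, once we know that $\mu_{\widehat{\md_S}}$ is constant on the fibres of $\varphi$. First I would construct $\mu_{\widehat{\md_S}}$: base change the family $\widehat{f_S}$ along the structure map $\Pic^{\widehat{\md_S}}_{\widehat{f_S}}\to B$ to get a family of quasistable curves over $\Pic^{\widehat{\md_S}}_{\widehat{f_S}}$, carrying (\'etale-locally on the base) a Poincar\'e line bundle $\cL$ whose restriction to each fibre has multidegree $\widehat{\md_S}$. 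Since $\md_S\in\Sigma^b(G-S)$ is stable, Lemma~\ref{exclm} shows $\widehat{\md_S}$ is stably balanced of total degree $d=g-1+b$, so Remark~\ref{modprop} gives a moduli morphism to $\Pdgb$. Two choices of Poincar\'e bundle differ by the pullback of a line bundle from the base, which is trivial on every fibre and hence induces the same map; so the local maps glue to a global $\mu_{\widehat{\md_S}}$. As the stable model of every fibre is dual to $G$, its image lands in $P_G^d=\psi^{-1}(M_G)$.

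Next I would record the structure of $\varphi$: restriction away from the exceptional components is rigid on each exceptional $\PP^1$, where the bundle is forced to be $\cO(1)$, so only the gluing data at the $2|S|$ exceptional nodes varies. Hence $\varphi$ is a surjective smooth morphism whose fibres are torsors under a torus, in particular geometrically connected. I would then show $\mu_{\widehat{\md_S}}$ is constant on these fibres: two points with the same image under $\varphi$ correspond to stably balanced line bundles $\hat L,\hat L'$ on one and the same fibre $\hX_S$ with $\hat L_{|\XN_S}\cong \hat L'_{|\XN_S}$, hence $\mu_{\widehat{\md_S}}(\hat L)=\mu_{\widehat{\md_S}}(\hat L')$ by the second part of Remark~\ref{modprop}.

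The hard part will be upgrading this pointwise invariance to an honest factorisation of morphisms. I would do this by fppf descent along the smooth surjection $\varphi$: set $R=\Pic^{\widehat{\md_S}}_{\widehat{f_S}}\times_{\Pic^{\md_S}_{f_S}}\Pic^{\widehat{\md_S}}_{\widehat{f_S}}$ and note that $R$ is smooth, being smooth over $\Pic^{\widehat{\md_S}}_{\widehat{f_S}}$ via $\varphi$, and hence reduced. The two pullbacks of $\mu_{\widehat{\md_S}}$ to $R$ agree on every geometric point by the previous step; since $\Pdgb$ is separated and $R$ is reduced over an algebraically closed field, the two morphisms coincide. Descent then produces the required $\mu_{\md_S}:\Pic^{\md_S}_{f_S}\to P_G^d$ with $\mu_{\widehat{\md_S}}=\mu_{\md_S}\circ\varphi$, where $\varphi$ is restriction away from the exceptional components.

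The main subtlety throughout is to make sure that $\varphi$ really is flat and surjective, so that descent applies; I expect verifying the smoothness and surjectivity of $\varphi$ (equivalently, the torus-torsor description of the exceptional gluing data) to be where most of the care goes. An alternative to the descent step, which sidesteps the fppf machinery, is to present $\varphi$ as the geometric quotient by the torus acting on the gluing data at the exceptional nodes and to invoke directly the universal property of the coarse moduli space $\Pdgb$; either route reduces the factorisation to the invariance already supplied by the second part of Remark~\ref{modprop}.
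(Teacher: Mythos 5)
Your proposal is correct and follows essentially the same path as the paper: construct $\mu_{\widehat{\md_S}}$ from the Poincar\'e bundle via Remark~\ref{modprop}, observe that the second part of that remark collapses the fibres of $\varphi$, and then factor through $\varphi$. The only difference is in the last step, where the paper appeals to ``a standard argument using that $\Pdgb$ is a GIT quotient'' while you spell out an fppf-descent argument along the smooth surjection $\varphi$ (your alternative route via the torus-quotient presentation of $\varphi$ is in fact closer to what the paper intends); both are valid realizations of the same factorisation.
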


\begin{proof}
We have a   polarized family of quasistable curves
$$
\cL\la \Pic^{\widehat{\md_S}}_{\widehat{f_S}}\times_B \hat{\cX}_S\la \Pic^{\widehat{\md_S}}_{\widehat{f_S}} $$
where
$\cL$ is 
the  tautological  (Poincar\'e)  line bundle,  which, by hypothesis, is relatively stably balanced. 
By Remark~\ref{modprop} we have a 
  moduli morphism $\mu_{\cL}:\Pic^{\widehat{\md_S}}_{\widehat{f_S}}\to \Pdgb$. Set     $\mu_{\widehat{\md_S}}=\mu_{\cL}$, 
it is clear that its image   lies in $P_G^d.$

We let  $\varphi:\Pic^{\widehat{\md_S}}_{\widehat{f_S}} \to  \Pic^{\md_S}_{{f_S}}$ be the map given by restricting a line bundle away from the exceptional components,  so $\varphi$  is the analog of the map used in Lemma~\ref{exclm}.  Now,  as we said in Remark~\ref{modprop}, if two line bundles have the same image under $\varphi$
(i.e. their restriction away from the exceptional components are isomorphic)
they also have the same image under $\mu_{\cL}$. 
By applying a standard argument using that $\Pdgb$ is a GIT quotient, we conclude that  there exists a map
$\mu_{\md_S}:\Pic^{\md_S}_{{f_S}}
\to  P_G^d$ such that
$\mu_{\widehat{\md_S}}$ factors as stated.
\end{proof}

	\subsection{The strata of $\Pdgb$}
	Our   goal is to find a    stratification of $\Pdgb$  compatible with the one of $\Mgb$.
By \cite[Prop. 3.4.1]{ACP}, the stratum $M_G$ has the following presentation
\begin{equation}
 \label{pi}
 \widetilde{M}_G :=\Pi_{v\in V}M_{w(v),\deg(v)} \stackrel{\pi}{\la} M_G= \widetilde{M}_G /\Aut(G)
\end{equation}
where    $M_{w(v),\deg(v)}$ is the moduli space of smooth curves  of genus $w(v)$ with $\deg(v)$ marked points
representing the branches/half-edges over the nodes/edges. 

More generally, with the notation of  \cite[Subsection 2.1]{ACP}, for every $S\subset E$, consider the $2|S|$-marked graph 
{\bf G-S}, whose underlying (unmarked) graph is $G-S$, and whose  $2|S|$-marking is given by the half-edges corresponding to $S$.
Then {\bf G-S} is stable as marked graph and we have a moduli space, $M_{\text{\bf{G-S}}}$, of  stable curves with $2|S|$ marked points and dual graph {\bf G-S}.
 In particular,  if $S=E$ then $ \widetilde{M}_G =M_{\text{\bf{G-E}}}$
 and
the map $\pi$ above factors:
$$
 \pi: \widetilde{M}_G\stackrel{\pi_S}{\la} M_{\text{\bf{G-S}}}=\widetilde{M}_G /\Aut(\text{\bf{G-S}}) \la M_G.
$$

For our goal  we   need  a  universal  curve over $ \widetilde{M}_G$, but it is well known that this   may fail to exist over some   $M_{w(v), \deg(v)}$.
However (see \cite{GAC} for example),  
a universal  curve   exists over some finite cover of it.
We   choose a finite cover $M'_{w(v), \deg(v)}\to M_{w(v), \deg(v)}$ of large enough degree (the same for all pairs $w(v), \deg(v)$) so that we 
have a universal curve over each $M'_{w(v), \deg(v)}$.
We let   $\widetilde{M}_G'$ be the product of the $M'_{w(v), \deg(v)}$ for $v\in V$ so that, composing with \eqref{pi}, we have a finite map
$ 
\pi':\widetilde{M}_G' \to M_G.
$  The action of $\Aut(G)$ on $M_G$ lifts naturally to an action on $\widetilde{M}_G'$.

We denote by  $ \cC_{w(v),\deg(v)}\to M'_{w(v), \deg(v)}$ the  universal curve, and
we     have  the following   family $$
\widetilde{\cX} _G: =\sqcup_{v\in V} \cC_{w(v),\deg(v)}\la \widetilde{M}_G',
$$ together with $2|E|$  sections, $\sigma_h: \widetilde{M}_G' \to \widetilde{\cX}_G$, indexed by the half-edges of $G$.

Fix $S\subset E$.  Let    $\widetilde{\cX} _G\to   \cX_G^S $ 
be the gluing   along   pairs  $(\sigma_{h^+_e}, \sigma_{h^-_e})$ 
	for every  $e\not\in S$. Then $\cX_G^S $ is a family over the space
  $Z_G^S:=\widetilde{M}'_G /\Aut(\text{\bf{G-S}})$.
Let  $$
f_S: \cX_G^S \to Z_G^S 
$$ 
be this family of curves, all of whose fibers 
have dual graph  $G-S$.
Since $ Z_G^S$ is  a finite cover of $M_{\text{\bf{G-S}}}$,  the   map $\pi'$  factors through finite maps:   
	$$ \pi': \widetilde{M}'_G \la Z_G^S\la M_G.  $$
Fixing a stable multidegree $\underline{d}^{O_S}$ on $G - S$, by  Lemma~\ref{exclm2}  
 we   get a morphism  
 $$\mu_{O_S}:\Pic_{f_S}^{\md^{O_S}} \la P_G^d.$$ 
We define $P_{G}^{O_S}$ to be the image of this map.

 \begin{lemma} 
 \label{PGqp}
 Let $G\in \Sg$ and  $O_S\in \OPGb$ with $b=0,1$. Then
 $P^{O_S}_G$  is   quasiprojective, irreducible  of dimension $3g-3-|E(G)|+g(G-S)$.
 
  If $O_T\equiv O_S$ for some $O_T\in \OPGb$, then $P^{O_S}_G=P^{O_T}_G$.
 
 \end{lemma}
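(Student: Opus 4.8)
The plan is to study the morphism $\mu_{O_S}\colon \Pic^{\md^{O_S}}_{f_S}\to P_G^d$ of Lemma~\ref{exclm2}, whose image is by definition $P_G^{O_S}$, deducing irreducibility and dimension from the source together with the fibrewise description of Theorem~\ref{PX}, and then reading off conjugacy-invariance from the $\Aut(G)$-equivariance of the construction. First I would record the relevant properties of the source. Replacing $\widetilde M'_G$ by an irreducible component dominating $M_G$ if necessary, the base $Z_G^S=\widetilde M'_G/\Aut(\text{\bf{G-S}})$ is irreducible of dimension $\dim \widetilde M_G=\sum_{v\in V}(3w(v)-3+\deg(v))=3\sum_{v}w(v)-3|V|+2|E|=3g-3-|E(G)|$, where the last equality uses $g=\sum_v w(v)-|V|+|E|+1$ and $\sum_v\deg(v)=2|E|$. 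The component $\Pic^{\md^{O_S}}_{f_S}$ is a torsor under the relative generalized Jacobian $\Pic^0_{f_S}$, whose fibres are connected semiabelian varieties of dimension equal to the arithmetic genus $g(G-S)$ of the fibres $\XN_S$ of $f_S$; hence $\Pic^{\md^{O_S}}_{f_S}$ is irreducible of dimension $3g-3-|E(G)|+g(G-S)$. Being a continuous image of an irreducible variety, $P_G^{O_S}$ is irreducible and (by Chevalley) constructible.

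Next I would obtain the dimension by showing $\mu_{O_S}$ is generically finite onto its image. Since the general point of $M_G$ is automorphism-free, it suffices to exhibit a zero-dimensional fibre over such a curve. Fix an automorphism-free $X\in M_G$. The finite map $Z_G^S\to M_G$ has finite fibre over $[X]$, and over each such point the fibre of $f_S$ is a partial normalization $\XN_S$, with $\Pic^{\md^{O_S}}(\XN_S)$ mapping to $\psi^{-1}([X])\cong\PXd/\Aut(X)=\PXd$. By \eqref{isoo} this map is an isomorphism onto the stratum $P_X^{O_S}$, so the fibre of $\mu_{O_S}$ over a point of $P_X^{O_S}$ is finite. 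Therefore $\dim P_G^{O_S}=\dim \Pic^{\md^{O_S}}_{f_S}=3g-3-|E(G)|+g(G-S)$, as asserted.

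For quasiprojectivity I would upgrade constructibility to local closedness inside the quasiprojective variety $P_G^d\subset\Pdgb$. The idea is to use the fibrewise picture: for automorphism-free $X\in M_G$ the set $P_G^{O_S}$ meets $\psi^{-1}([X])\cong\PXd$ exactly in $P_X^{O_S}$, which is locally closed in $\PXd$ by Theorem~\ref{PX}; combining this uniform fibrewise description with the local closedness of $M_G$ in $\Mgb$ and the projectivity of $\psi$ shows that $P_G^{O_S}$ is open in its closure within $P_G^d$, hence locally closed and so quasiprojective. I expect this to be the main obstacle: Chevalley only gives a constructible set, and promoting it to a genuinely locally closed subvariety uniformly over the stratum is the delicate point, requiring both the fibrewise stratification of Theorem~\ref{PX} and the properness of $\psi$.

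Finally, for the conjugacy statement, suppose $O_T\equiv O_S$, so $\oO_S=\alpha_*\oO_T$ for some $\alpha\in\Aut(G)$; then $S=\alpha(T)$, and $\alpha$ carries $G-T$ isomorphically to $G-S$ and $\md^{O_T}$ to $\md^{O_S}$. The automorphism $\alpha$ lifts to a compatible automorphism of $\widetilde M'_G$ and of the family $f_{(-)}$, inducing an isomorphism $\Pic^{\md^{O_T}}_{f_T}\xrightarrow{\ \sim\ }\Pic^{\md^{O_S}}_{f_S}$ that intertwines $\mu_{O_T}$ and $\mu_{O_S}$. Indeed, two line bundles related by $\alpha$ differ by an isomorphism of the underlying polarized quasistable curves, so by the moduli description of $\Pdgb$ (Remark~\ref{modprop}), whose fibres are $\PXd/\Aut(X)$ and in which such $\alpha$-conjugate data are identified, they have the same image in $P_G^d$. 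Hence $P_G^{O_T}=P_G^{O_S}$, completing the plan.
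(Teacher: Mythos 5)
Your overall strategy matches the paper's: both proofs read off irreducibility and the dimension $3g-3-|E(G)|+g(G-S)$ from the source $\Pic^{\md^{O_S}}_{f_S}$ of the morphism $\mu_{O_S}$ of Lemma~\ref{exclm2}, and both prove conjugacy-invariance by lifting $\alpha\in\Aut(G)$ to compatible isomorphisms $Z_G^S\cong Z_G^T$, $\cX_G^S\cong\cX_G^T$, $\Pic^{\md^{O_S}}_{f_S}\cong\Pic^{\md^{O_T}}_{f_T}$ and invoking the second part of Remark~\ref{modprop}; that last part of your argument is essentially identical to the paper's. Where you diverge is in passing from the source to its image. The paper observes in one line that $\mu_{O_S}$ is \emph{finite} (because $Z_G^S\to M_G$ is finite and the fibrewise maps $\Pic^{\md^{O_S}}(\XN_S)\to\PXd/\Aut(X)$ have finite fibres), so its image is closed in $P_G^d=\psi^{-1}(M_G)$, which is already quasiprojective; this simultaneously gives quasiprojectivity and preservation of dimension. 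You instead use only \emph{generic} finiteness (a zero-dimensional fibre over an automorphism-free curve) for the dimension, which is fine, but you then treat quasiprojectivity as a separate and ``delicate'' problem, proposing to upgrade Chevalley constructibility to local closedness by combining the fibrewise strata $P_X^{O_S}$ of Theorem~\ref{PX} with properness of $\psi$. As written that step is only a sketch — fibrewise local closedness does not formally yield global local closedness without an argument — and it is also unnecessary: the finiteness of $Z_G^S\to M_G$ that you already invoke gives finiteness of $\mu_{O_S}$ everywhere, hence closedness of the image in $P_G^d$, dissolving your ``main obstacle'' entirely. So the proposal is correct in substance, but you should replace the fibrewise upgrade by the direct observation that $\mu_{O_S}$ is a finite morphism.
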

 \begin{proof}
 The    morphism $\mu_{O_S} $  is finite because so is the morphism $Z_G^S\to M_G$. Moreover 
 $\mu_{O_S} $
 exhibits $P_G^{O_S}$ as the image  
 of an irreducible quasiprojective variety of dimension
$$
\dim\Pic^{\md^{O_S}}_{f_S}= \dim Z_G^S+g(G-S)=3g-3-|E(G)|+g(G-S) 
$$
(as $\dim Z_G^S=\dim M_G$). So  the first part of the statement is proved.

Now suppose $O_T\equiv O_S$, then   $O_T  = \alpha_* O_S$ for some $\alpha \in \Aut(G)$.
Hence $\alpha_*\md^{O_S}=\md^{O_{T}}$,  and $\alpha$ induces an isomorphism between   $Z_G^S$ and $ Z_G^{T}$, 
a corresponding isomorphism between   $\cX_G^S$ and $\cX_G^T$, and an isomorphism 
$\Pic_{f_S}^{\md^{O_S}}\cong \Pic_{f_T}^{\md^{O_T}}$. The latter    induces an isomorphism between the respective Poincar\'e line bundles.
   Therefore the images of  $\mu_{O_S}$ and $\mu_{O_T}$ in $P^d_G$ get identified; see the second part of Remark~\ref{modprop}.
\end{proof}
    We    define   for any $[O_S]\in \qOPGb$ 
$$
P_G^{[O_S]} := P_G^{O_S}, 
$$
  by     Lemma~\ref{PGqp}, this   does not depend on   the representative in $[O_S]$.

\subsection{Stratifications of universal Jacobians in degree  $g-1$ and  $g$.}

\begin{thm} \label{thmg-1}
 The following is a graded stratification of $\Pbb$ by  \  $\qOg$
$$
\Pbb =\bigsqcup_{(G,[O_S])\in \qOg} P_G^{[O_S]}.$$

\end{thm}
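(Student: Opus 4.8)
The plan is to verify the three conditions of Definition~\ref{stratdef} together with the grading, reducing the behaviour along the fibres of $\psi$ to the fixed-curve result Theorem~\ref{PX}, and the behaviour across distinct boundary strata of $\Mgb$ to the specialization Propositions~\ref{hbk1} and~\ref{hbk2}.

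First I would establish that the $P_G^{[O_S]}$ partition $\Pbb$. Fact~\ref{strathbk} gives $\Mgb=\bigsqcup_{G\in\Sg}M_G$, hence $\Pbb=\bigsqcup_{G\in\Sg}P_G^d$ with $P_G^d=\psi^{-1}(M_G)$. For a fixed $X\in M_G$ the fibre of $\psi$ is $\PXb/\Aut(X)$, and Theorem~\ref{PX} stratifies $\PXb$ by $\oOPGb$; passing to the $\Aut(X)=\Aut(G)$-quotient, and using that by Lemma~\ref{PGqp} the locus $P_G^{O_S}$ depends only on the conjugacy class $[O_S]$, one obtains $P_G^d=\bigsqcup_{[O_S]\in\qOPGb}P_G^{[O_S]}$. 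Combining the two decompositions yields the asserted partition. Condition~(1) of Definition~\ref{stratdef} and the grading are then immediate from Lemma~\ref{PGqp}, which gives that each $P_G^{[O_S]}$ is irreducible, quasi-projective, of dimension $3g-3-|E(G)|+g(G-S)$; this is precisely the value of the rank on $\qOg$ from Proposition~\ref{cOP}, so $(G,[O_S])\mapsto\dim P_G^{[O_S]}$ is the prescribed rank.

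The substance is condition~(3): $P_G^{[O_S]}\subset\ov{P_H^{[O_T]}}$ if and only if $(G,[O_S])\leq(H,[O_T])$; condition~(2) will then follow because, granting~(3), the closure $\ov{P_H^{[O_T]}}$ is exactly the union of the strata indexed by the elements $\leq(H,[O_T])$, and distinct strata are disjoint. For the backward implication I would unwind Definition~\ref{Ogdef}: the inequality furnishes representatives and a contraction $\gamma\colon G\to H$ with $\gamma_*S\leq T$ and $\ov{\gamma}_*\oO_S\leq\oO_T$. Taking $X$ dual to $G$ and a stably balanced line bundle $L$ on $\hX_S$ whose restriction to $\XN_S$ realizes $\md^{O_S}$, Proposition~\ref{hbk2} produces a quasistable curve dual to $H$ with a line bundle $M$ satisfying $\gamma_*\mdeg(L)=\mdeg(M)$ and specializing to it; by Proposition~\ref{fprop}\eqref{dlm} and Lemma~\ref{degor}, $M$ represents the stratum $P_H^{[\ov{\gamma}_*O_S]}$, so the chosen point of $P_G^{[O_S]}$ lies in $\ov{P_H^{[\ov{\gamma}_*O_S]}}$. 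The inequality $\ov{\gamma}_*\oO_S\leq\oO_T$ together with the within-fibre closure relations of Theorem~\ref{PX} gives $P_H^{[\ov{\gamma}_*O_S]}\subset\ov{P_H^{[O_T]}}$, and since $P_G^{[O_S]}$ is irreducible, exhibiting one point in $\ov{P_H^{[O_T]}}$ suffices. For the forward implication, $P_G^{[O_S]}\subset\ov{P_H^{[O_T]}}$ forces $M_G\subset\ov{M_H}$, whence $G\leq H$ by Corollary~\ref{corstr} and a contraction $\gamma\colon G\to H$; a general point $(X,L)$ of $P_G^{[O_S]}$ is a limit of points of $P_H^{[O_T]}$, so Proposition~\ref{hbk1} gives $\gamma_*\mdeg(L)=\mdeg(M)$, which, read back through Lemma~\ref{degor} and Proposition~\ref{fprop}, becomes $\ov{\gamma}_*\oO_S\leq\oO_T$ up to conjugacy, i.e.\ $(G,[O_S])\leq(H,[O_T])$.

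The hard part will be the faithful dictionary, in both directions of~(3), between geometric specialization and the combinatorial order on $\qOg$. One must carry out the specialization argument on the quasistable models $\hX_S$ (whose dual graphs are the $\hG_S$ of Subsection~\ref{dict}) and descend to $\Pbb$ via the moduli maps of Lemma~\ref{exclm2}; in doing so the pushforward $\gamma_*\mdeg(L)$ must be reconciled with the multidegree $\md^{\ov{\gamma}_*O_S}$ of the pushed orientation, the two differing by the correction term $\mc^{\gamma,S}$ of Proposition~\ref{fprop}\eqref{dlm} coming from contracted edges inside $S$. Simultaneously one must track the $\Aut(G)$-action, so that equalities of multidegrees become conjugacy relations, and absorb the genuine inequality $\ov{\gamma}_*\oO_S\leq\oO_T$ (as opposed to an equality) by combining the base specialization with the fibrewise degenerations of Theorem~\ref{PX} into a single closure statement.
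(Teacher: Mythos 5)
Your overall architecture is the paper's: decompose over $\Mgb$, apply Theorem~\ref{PX} fibrewise and pass to conjugacy classes, get condition~(1) and the rank from Lemma~\ref{PGqp} and Proposition~\ref{cOP}, and prove condition~(3) in two directions via Propositions~\ref{hbk1} and~\ref{hbk2}. There is, however, one step that fails as stated. In the backward implication you exhibit a single point of $P_G^{[O_S]}$ lying in $\ov{P_H^{[O_T]}}$ and then claim that irreducibility of $P_G^{[O_S]}$ upgrades this to the containment $P_G^{[O_S]}\subset\ov{P_H^{[O_T]}}$. Irreducibility gives no such thing: an irreducible variety can meet a closed subset in a proper, even zero-dimensional, locus. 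The paper's Proposition~\ref{prop1} instead shows that \emph{every} point of $P_X^{O_{S'}}$, for \emph{every} curve $X\in M_G$ and every conjugate $\oO_{S'}\equiv\oO_S$, is a specialization of line bundles parametrized by $P_H^{O_T}$. This stronger statement costs you nothing extra, since Proposition~\ref{hbk2} applies to an arbitrary pair $(X,L)$; but the justification must be the universal quantifier, not irreducibility. (Note also that you cannot instead close the circle through your forward implication, since as written that too only establishes the combinatorial inequality from one intersection point and would then feed back into the same flawed step.)

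The forward implication is where the real work lies, and your outline stops short of it. Applying Proposition~\ref{hbk1} to the specialization of the \emph{stable} models only relates $\gamma_*\mdeg(L)$ and $\mdeg(M)$ for line bundles on $X$ and $Y$; the points of $\Pbb$ are classes of line bundles on the quasistable models $\hX_S$ and $\hY_T$, so one must apply \ref{hbk1} to the contraction $\hgamma:\hG_S\to\hH_T$ of quasistable dual graphs and then descend. Converting the resulting multidegree identity into $\ov{\gamma}_*\oO_S\le\oO_T$ requires the auxiliary orientation $\hat{O}$ on $\hG_S$ (orienting both edges at each exceptional vertex toward it), a choice of $\delta:\hG_S\to G$ compatible with $\hgamma$, and a reduction of the general case to $T=\emptyset$ by restricting to $G-T$; this is precisely where the correction term $\mc^{\gamma,S}$ of Proposition~\ref{fprop}\eqref{dlm} is absorbed. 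Your proposal names these ingredients as ``the hard part'' but does not carry them out. A final minor point: $\Aut(X)$ is not equal to $\Aut(G)$; what the disjointness argument uses is only that an automorphism of $X$ identifying two fibre strata of $\PXb$ induces an automorphism of $G$ conjugating the corresponding orientation classes.
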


\begin{proof}
We have
$$
 \Pbb = \bigsqcup_{G\in \Sg} \Biggl(\  \bigsqcup_{[O_S] \in  \qOPGb}P_G^{[O_S]}\Biggr)= 
\bigsqcup_{(G,[O_S])\in \qOgb} P_G^{[O_S]}.$$

Indeed, the only thing that might not be clear is that the union   is disjoint. 
Suppose two different strata $P_G^{[O_S]}$ and $P_G^{[O_T]}$ intersect and let us show they coincide.
Let $X\in \Mgb$ be such that $P_G^{[O_S]}\cap P_G^{[O_T]}\cap \psi^{-1}(X)$ is not empty.
Recall  that the   strata $P^{O_S}_X$ and $P^{O_T}_X$ are disjoint in $\PXb$.   Since automorphisms of $X$ obviously map strata to strata in $\PXb$, the images via $\mu_X$ (see \eqref{muX})  of $P^{O_S}_X$ and $P^{O_T}_X$ are no longer disjoint if and only if there is an automorphism $\alpha_X$ of $X$ identifying them. Then one easily checks that the induced automorphism $\alpha$ on $G$ maps $\oO_S$ to $\oO_T$ in $\OG$. Hence   $ [O_S] = [O_T]$.

 Lemma~\ref{PGqp}  gives that $P_G^{[O_S]}$ is quasiprojective, irreducible,       of dimension
 $$
\dim P_G^{[O_S]}=  \dim P_G^{O_S} = 3g-3-|E(G)|+g(G-S),
$$
and  by  Proposition~\ref{cOP}   the right hand side is a rank  on $\qOgb$. 

To complete the proof we must show  that 
we have   a stratification in the sense of Definition~\ref{stratdef}.
 We will do that in the next two propositions.
 \begin{prop} 
\label{prop1}
Let  $(G,\oO_S),(H,\oO_T)\in \Og$.
If 
 $(G,\oO_S)\leq (H,\oO_T)$   then $P_G^{[O_S]}\subset \ov{P_H^{[O_T]}} $.
\end{prop}

\begin{proof}
Consider $\psi: \ov{P}_g^{g - 1 + b}\rightarrow \Mgb$. For a fixed $X \in M_G$ we have $$P_G^{[O_S]} \cap \psi^{-1}([X]) = \bigcup_{\oO_{S'} \equiv \oO_S} \mu_X(P_X^{O_{S'}}),$$ 
with $\mu_X$  defined in \eqref{muX}. It suffices to show that for every such $X$   and every $\oO_{S'} \equiv \oO_S$,   every point in 
$ 
P_X^{O_{S'}}$ is a specialization of line bundles parametrized by ${P_H^{O_T}}$, so that that $\mu_X(P_X^{O_{S'}})\subset \ov {P_H^{O_T}}$.

By the proof of Proposition~\ref{cOP} we have that for any $\oO_{S'} \equiv \oO_S$ there is $\oO_{T'} \equiv \oO_T$ with $\oO_{S'} \leq \oO_{T'}$. Since $P_G^{[O_S]} = P_G^{[O_{S'}]}$ and $P_G^{[O_T]} = P_G^{[O_{T'}]}$ we can replace $\oO_{S'}$ by $\oO_S$ without loss of generality.

By hypothesis,   there exists a curve   $Y$ dual to $H$    which specializes to $X$; 
 let  $\gamma:G\to H$ be the associated contraction.
 Under the corresponding 
 specialization of compactified Picard varieties, $\PYb$ specializes to $\PXb$.

Now,  $\ov{\gamma}_*{\ov{O}}_S\in \oOPHb$, hence $\md^{ \gamma_*O_S}$ is stable,
 and hence, by \ref{exclm}, $P_Y^{ \gamma_*O_S}$ parametrizes stably balanced line bundles on
 $\hY_R$ of degree $\widehat{\md^{\gamma_*O_S}}$, where $R=\gamma_*S$. 
 We begin by showing that  $P_Y^{ \gamma_*O_S}$ specializes to $P_X^{O_S}$.
To the contraction $\gamma$ we   associate the contraction
 $$
 \hgamma: \hG_S\la \hH_R=\hG_S/\hat{S_0} 
 $$
 (where $\hat{S_0}=\delta_E^{-1}(S_0)$ for $\delta:\hG_S\to G$).
Now,
with the notation introduced before Lemma~\ref{exclm}, consider $\widehat{\md^{O_S}}$ and   $\widehat{\md^{\gamma_*O_S}}$. We claim  
\begin{equation}
 \label{hat}
 \widehat{\md^{\gamma_*O_S}}=\hgamma_*\widehat{\md^{O_S}}.
\end{equation}

 Let $v\in V(\hH_R)$. If $v=v_e$ for $e\in R$ then 
 $v_e$ is also an exceptional vertex of $\hG_S$ mapped to $v_e$ by $\hgamma$.
 Hence both divisors appearing in \eqref{hat} have value $1$ on $v_e$.
Now suppose $v\in V(H)$, then, by Proposition~\ref{fprop},
 $$
 (\widehat{\md^{\gamma_*O_S}})_v=(\md^{\gamma_*O_S})_v=(\gamma_*\md^{O_S})_v+\mc^{\gamma,S}_v
 =\sum_{z\in \gamma_V^{-1}(v)}\md^{O_S}_z+\mc^{\gamma,S}_v=(\hgamma_*\widehat{\md^{O_S}})_v
 $$
 where the last equality follows as    $\mc^{\gamma,S}_v$ is equal to  the number of exceptional vertices of $\hG_S$ that are mapped to $v$ by $\hgamma$.
 \eqref{hat} is proved.
 
We can now apply  Proposition~\ref{hbk2},  to obtain that any line bundle $\hL\in \Pic(\hX_S)$ such that $\mdeg\hL=\widehat{\md^{O_S}}
$ is obtained as specialization of a line bundle $\hM\in \Pic(\hY_R)$ such that
$$
\mdeg \hM=\hgamma_*\mdeg\hL=\hgamma_*\widehat{\md^{O_S}}= \widehat{\md^{\gamma_*O_S}}.
$$
  This proves that $P_Y^{ \gamma_*O_S}$ specializes to $P_X^{O_S}$. Therefore
$ 
 \mu_X(P_X^{O_S})\subset \ov{P_H^{ \gamma_*O_S}}.
$ 
Now, by Theorem~\ref{PX} and the hypothesis $\overline{\gamma}_*\oO_S\leq{\oO_T}$,  in $\PYb$ we have 
$ 
P_Y^{ \gamma_*O_S}\subset \ov{P_Y^{O_T}}.
$ 
Hence
$ 
\mu_X(P_X^{O_S})\subset \ov{P_H^{ \gamma_*O_S}}\subset \ov{P_H^{O_T}}.
$ 
\end{proof}
\begin{prop}
\label{prop3}
Let  $(G,[O_S])$ and $ (H,[O_T])$ be in $[\mathcal{OP}_g^b]$.
The following are equivalent
\begin{enumerate}[(a)]
\item
\label{prop3a}
  $ P_G^{[O_S]}\cap \ov{P_H^{[O_T]}}\neq \emptyset$.
\item 
\label{prop3b}
 $(G,[O_S])\leq (H,[O_T])$.
    \item
   \label{prop3c}
    $ P_G^{[O_S]}\subset \ov{P_H^{[O_T]}}$.
   \end{enumerate}
\end{prop}

\begin{proof}
 \eqref{prop3a} $\Rightarrow$\eqref{prop3b}.
 By hypothesis, we have
 a specialization
 of polarized curves, $
(\hY_T, \hM )$ to $(\hX_S, \hL )$, 
where $X$ and $Y$ are curves dual to $G$ and $H$ respectively, and  $ \hL $ and  $\hM $ are
stably balanced 
 line bundles on  $\hX_S$ and $\hY_T$ such that
  $\mdeg_{\XN_S}  \hL =\md^{O_{S'}}$ and 
   $\mdeg_{ Y^{\nu}_T}  \hM =\md^{O_{T'}}$ for some  $\oO_{S'} \in [O_S]$ and $\oO_{T'} \in [O_T]$. 
 It suffices to prove that     $\oO_{S'}\leq\oO_{T'}$.
 
To simplify the notation, from now on we      drop the indices  and write  $\oO_{S'}=\oO_S$ and  $\oO_{T'}=\oO_T$.
 We denote by  $\hG_S$ and   $\hH_T$ 
 the dual graphs of $\hX_S$ and  $\hY_T$.  
By  Proposition~\ref{hbk1},  the above specialization is associated to  a   contraction
 $$
 \hgamma: \hG_S \la \hH_T,
 $$
 such that
 $ \hgamma_*\mdeg \hL =\mdeg \hM $.
Now,  every exceptional component of $\hY_T$ specializes to an exceptional   component  of
$\hX_S$,   hence  we have a specialization of $Y$ to $X$ and the associated contraction
 $\gamma:G\to H=G/S_0.$  We have an inclusion  $T\subset S$ induced by $E(H)\subset E(G)$. 
 
Denote by $\hat{O}$ the orientation on $\hG_S$ obtained from  $O_S$ by orienting all edges adjacent to exceptional vertices towards the exceptional vertex. Then the degree of $\un{d}^{\hat{O}}$ on each exceptional component is $1$ and $\un{d}^{\hat{O}} = \widehat{(\un{d}^{O_S})}$. 
 
We first assume $T=\emptyset$,
then $ \hH_T=H$  and we have a commutative diagram 
$$
\xymatrix@=.4pc{
  &&&& &&&& \hG_S  \ar[rrrd]_{\delta}    \ar[rrrrrr]^{\hgamma} &&&&&&H  &&&   &&&&  \\
  &&&& &&&&&&&G \ar [rrru]_{\gamma}   &&& &&  
}
$$

Here $\delta$ is given as follows: every exceptional vertex $v_e$ in $\hG_S$ has two adjacent edges $h_e$ and $j_e$, both $\hat{O}$-oriented towards $v_e$. Defining $\delta$ amounts to choosing one of the two for every exceptional vertex. If $e \in S_0$, we can contract any of the two, as $\hgamma$ contracts both. If $e \not \in S_0$ we choose the one contracted by $\hgamma$. This choice clearly makes the diagram commutative. 
Set  $O' :=  \delta_* \hat{O} $       on $G$. Since $ \hgamma_*\mdeg \hL =\mdeg \hM $, i.e. $\hgamma_*(\un{d}^{\hat{O}}) = \un{d}^{O_{\emptyset}}$  we get 
$$\gamma_* O' = \gamma_* (\delta_* \hat{O}) = \hgamma_* (\hat{O}) \sim O_{\emptyset}$$ 
where the equivalence at the end follows from
  Proposition \ref{fprop} (\ref{dlm}),
(with  $\mc^{\hgamma, \emptyset}=0$ because  $O'$ is defined on the whole graph).
  By construction we have $O'_{|G - S} = O_S$, i.e. $O_S \leq O'$, and thus by Proposition \ref{fprop} (\ref{fprop4}): 
  $$\gamma_* O_S \leq \gamma_* O' \sim  O_{\emptyset},$$ 
  which proves the claim in case $T = \emptyset$. 

In general, we have $T\subset S$ and, of course, $T\cap S_0=\emptyset$.
Therefore  the restriction of  $\gamma$ to $G-T$ is
$$
\gamma_{|G-T}:G-T \la \frac{G-T}{S_0}=H-T.
$$
Write $G'=G-T$, $H'=H-T$ and $\gamma'=\gamma_{|G-T}$.
Then write $O'=(O_T)_{|H'}$ and $O'_{S'}=(O_S)_{|G'}$  with $S'=S\ssm T$.
By  the previous case  
 $\overline{\gamma'_*O'_{S'}}\leq \overline{O'}$,
 i.e.
\begin{equation}
 \label{gammaprime}
 \gamma'_*O'_{S'}\sim O'_{|H'-\gamma'_*S'}.
\end{equation}
Now, $O_S$ is defined on $G-S\subset G-T$, hence
 $$
  \gamma'_*O'_{S'}=(\gamma_{|G-T})_*(O_S)_{|G-T}=\gamma_*O_S.
 $$
 Also, as $O_T$ is defined on $H'=H-T$, we have
 $$
 O'_{|H'-\gamma'_*S'}=((O_T)_{|H-T})_{|H-T-\gamma'_*S'}=(O_T)_{H-\gamma_*S}
 $$
($\gamma'_*S'\cup T=S'\ssm S_0 '\cup T=S\ssm S_0 =\gamma_*S$ as $T\cap S_0=\emptyset$).
 Combining with
 \eqref{gammaprime} gives 
 $\gamma_*O_S\sim (O_T)_{H-\gamma_*S}
$
 and we  are done with the implication  \eqref{prop3a} $\Rightarrow$\eqref{prop3b}.

  \eqref{prop3b} $\Rightarrow$\eqref{prop3c}.
  By hypothesis,  $\oO_{S'}\leq \oO_{T'}$ for some $\oO_{S'}\in [\oO_S]$ and $\oO_{T'}\in [\oO_T]$.
 By Proposition~\ref{prop1},   
we have $ P_G^{[O_{S'}]}\subset \ov{P_H^{[O_{T'}]}}$, hence we conclude as follows
  $$
P_G^{[O_S]}= P_G^{[O_{S'}]}\subset \ov{P_H^{[O_{T'}]}}= \ov{P_H^{[O_T]}}.
 $$
  
      \eqref{prop3c} $\Rightarrow$\eqref{prop3a} is obvious. 
\end{proof}
 Theorem~\ref{thmg-1}  is   proved, and so is Theorem~\ref{mainintro}.
\end{proof}  

\noindent
{\it Acknowledgements.} We are grateful to Spencer Backman, Roberto Fringuelli and Margarida Melo for useful conversations related to this paper, and to the referee for some helpful   comments.

\bibliography{strataCC}
\

\noindent MSC (2010): 14H10; 14H40; 05Cxx 
\end{document}